\newcommand{\Ar}%{A_{\mathrm{ret}}}
{\mathcal{L}_A}
\newcommand{\ph}{\varphi}
\newcommand{\eps}{\varepsilon}
\newcommand{\RR}{\mathbb{R}}
\newcommand{\CCC}{\mathcal{C}}
\newcommand{\PPP}{\mathcal{P}}
\newcommand{\SSS}{\mathcal{S}}
\newcommand{\BBB}{\mathcal{B}}
\newcommand{\UUU}{\mathcal{U}}
\newcommand{\GGG}{\mathcal{G}}
\newcommand{\NN}{\mathbb{N}}
\numberwithin{equation}{section}
\newtheorem{theorem}{Theorem}[section]
\newtheorem{proposition}[theorem]{Proposition}
\newtheorem{lemma}[theorem]{Lemma}
\newtheorem{definition}[theorem]{Definition}
\newtheorem{remark}[theorem]{Remark}
\newtheorem{corollary}[theorem]{Corollary}
\newtheorem{thma}{Theorem}
\begin{document}

\title[Thermodynamics with controlled specification]{Thermodynamic formalism for non-uniform systems with controlled specification and entropy expansiveness}

\author{Tianyu Wang}
\author{Weisheng Wu}

\address{School of Mathematical Sciences, Shanghai Jiaotong University, No 800 Dongchuan Road, Shanghai 200240, P.R.China}
\email{a356789xe@sjtu.edu.cn}

\address{School of Mathematical Sciences, Xiamen University, Xiamen, 361005, P.R. China}
\email{wuweisheng@xmu.edu.cn}

\begin{abstract}
We study thermodynamic formalism of dynamical systems with non-uniform structure. Precisely, we obtain the uniqueness of equilibrium states for a family of non-uniformly expansive flows by generalizing Climenhaga-Thompson's orbit decomposition criteria. In particular, such family includes entropy expansive flows. Meanwhile, the essential part of the decomposition is allowed to satisfy an even weaker version of specification, namely controlled specification, thus also extends the corresponding results in Pavlov \cite{Pav19}.

Two applications of our abstract theorems are explored. Firstly, we introduce a notion of regularity condition called weak Walters condition, and study the uniqueness of measure of maximal entropy for a suspension flow with roof function satisfying such condition. Secondly, we investigate topologically transitive frame flows on rank one manifolds of nonpositive curvature, which is a group extension of nonuniformly hyperbolic flows. Under a bunched curvature condition and running a Gauss-Bonnet type of argument, we show the uniqueness of equilibrium states with respect to certain potentials.
\end{abstract}

%\date{\today}

%\thanks{VC was partially supported by NSF grants DMS-1554794 and DMS-2154378, and a Simons Foundation Fellowship.}

\maketitle

\section{Introduction}
Let $X$ be a compact metric space, $F=\{f_t\}_{t\in \mathbb{R}}$ be a continuous flow over $X$, and $\varphi\in C(X,\mathbb{R})$ be a given potential function. We will study the general conditions exerted on $(X,F,\varphi)$ that guarantee the existence and uniqueness of equilibrium states. One of the earliest general criteria traces back to the remarkable work of Bowen in 1970's \cite{Bow74}, which shows that in the case of homeomorphism $(X,f,\ph)$, expansiveness and specification of $(X,f)$ plus a bounded distortion property of $\ph$ (also known as the Bowen property nowadays) imply that $(X,f,\ph)$ has a unique equilibrium state. To a significant level, these three conditions are enlightened by the corresponding phenomena in uniformly hyperbolic systems. Later in 2016, by weakening the above uniform conditions in different directions, Climenhaga and Thompson managed to improve Bowen's criteria in \cite{CT16} by making it applicable to amounts of systems exhibiting various types of non-uniform hyperbolicity. Precisely, they relax the above three uniform conditions and decompose every orbit segment from $X\times \mathbb{R}^+$\footnote{where each pair $(x,t)\in X\times \mathbb{R}^+$ is thought of as the orbit segment $\{f_s(x)|0\leq s<t\}$.} into three parts $(\mathcal{P},\mathcal{G},\mathcal{S})$, with $\mathcal{G}$ satisfying weakened specification and Bowen property and dominating in pressure. The non-uniform expansiveness assumption exerted there is expressed by so-called `obstruction to expansiveness', which considers examples in which the non-expansive set can not support any measures with large pressure. In particular, the failure of expansiveness is `invisible' with respect to the equilibrium state.

Clearly, such requirement on expansiveness can not always be satisfied when the system is only assumed to be \emph{entropy expansive}, as the flow may act as an isometry in central direction with dimension greater than $1$, causing the non-expansive set (at any scale) to be the entire space. In this paper, motivated by Climenhaga-Thompson's orbit decomposition technique, we propose a general criteria in characterizing thermodynamic formalism for entropy-expansive systems. Meanwhile, the specification property to our concern is a weakened version of \emph{controlled specification}. The optimism of such property in providing uniqueness of equilibrium states is first revealed in \cite{Pav16} by showing that the failure of controlled specification may cause co-existence of multiple equilibrium states, even in symbolic dynamics. Being our first main result, the criteria is stated as follows.
\begin{thma} \label{thm:a}
    Let $(X,F)$ be a continuous flow on a compact metric space, and $\ph:X\to \mathbb{R}$ be a continuous potential. Suppose there exists some $\eps>0$ at which $(X,F)$ is entropy expansive, and there exists  $\mathcal{D}\subset X\times\mathbb{R}^+$ which admits a decomposition $(\mathcal{P},\mathcal{G},\mathcal{S})$ satisfying the following conditions:
    \begin{enumerate}
        \item For every $\eta>0$, $\mathcal{G}$ satisfies weak controlled specification at scale $\eta$ with gap function $h_{\eta}$.
        \item $\ph$ is $g$-distorted over $X\times \mathbb{R}^+$ at scale $\eps$.
        \item For every $\eta>0$, $h_{\eta}$ and $g$ satisfy $\liminf_{t\to \infty}\frac{h_{\eta}(t)+g(t)}{\log t}=0$.
        \item $\lim_{\eta\to 0}P(\mathcal{D}^c\cup [\mathcal{P}]\cup [\mathcal{S}],\ph,\eta,\eps)<P(\ph)$.
    \end{enumerate}
    Then $(X,F,\ph)$ has a unique equilibrium state.
\end{thma}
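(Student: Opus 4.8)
I would follow the Climenhaga--Thompson orbit-decomposition scheme, adapted to entropy expansiveness and to the weaker (controlled) specification. The first, easy, step is existence: entropy expansiveness at scale $\eps$ makes the metric entropy map $\mu\mapsto h_\mu(F)$ upper semicontinuous on the weak${}^*$-compact convex space of $F$-invariant probability measures (the standard Bowen/Misiurewicz argument), so $\mu\mapsto h_\mu(F)+\int\ph\,d\mu$ attains its supremum $P(\ph)$; equilibrium states exist, and --- the entropy map being affine --- they form a simplex whose extreme points are ergodic. Hence it suffices to produce exactly one ergodic equilibrium state, which I would do by constructing a distinguished state $m$ and showing that every ergodic equilibrium state equals it.

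For the construction and the lower bound: for each small $\eta>0$, condition (4) forces $\mathcal{G}$ to carry the full pressure at the two scales $(\eta,\eps)$ (the obstruction set $\mathcal{D}^c\cup[\mathcal{P}]\cup[\mathcal{S}]$ does not), so along a sequence $T\to\infty$ I can pick $(\eta,T)$-separated sets $E_T$ of $\mathcal{G}$-segments with $\sum_{x\in E_T}e^{\ph(x,T)}\ge e^{TP(\ph)-o(T)}$; let $m_\eta$ be a weak${}^*$ limit point of the orbit-averaged empirical measures built from $E_T$. Weak controlled specification at scale $\eta$ lets me splice any finite list of $\mathcal{G}$-segments into one $\eta$-shadowing orbit, paying transition gaps governed by $h_\eta$ and a distortion penalty governed by $g$; condition (3) is exactly what makes the accumulated cost of gluing $N\approx T/S$ pieces of common length $\approx S$ subexponential --- of order $e^{o(T)}$ --- when $N,S,T$ run along suitably chosen subsequences. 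This yields the lower quasi-Gibbs estimate $m_\eta(B_T(x,\eta))\ge e^{-TP(\ph)+\ph(x,T)-o(T)}$ for $(x,T)\in\mathcal{G}$, and summing over $E_T$ gives $h_{m_\eta}(F)+\int\ph\,dm_\eta\ge P(\ph)$, so $m_\eta$ is an equilibrium state; letting $\eta\to0$ along a subsequence produces an equilibrium state $m$ carrying a lower quasi-Gibbs bound at arbitrarily small scales.

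For the upper bound: let $\mu$ be an arbitrary ergodic equilibrium state. Using $(\mathcal{P},\mathcal{G},\mathcal{S})$ I would write a $\mu$-generic segment $(x,T)$ as a $[\mathcal{P}]$-prefix, a $\mathcal{G}$-core, and an $[\mathcal{S}]$-suffix, plus a leftover piece in $\mathcal{D}^c$. If the total length of the obstruction pieces had positive asymptotic density for $\mu$-a.e.\ $x$, then a covering/subadditivity estimate combined with condition (4) would force $h_\mu(F)+\int\ph\,d\mu<P(\ph)$, contradicting that $\mu$ is an equilibrium state; so the $\mathcal{G}$-core has length $T-o(T)$ almost everywhere. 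Feeding this into a two-scale Brin--Katok (Katok pressure) argument --- where entropy expansiveness at $\eps$ is used to replace the vanishing-scale limit in the entropy formula by the fixed scale $\eps$ --- yields the upper quasi-Gibbs estimate $\mu(B_T(x,\eps))\le e^{-TP(\ph)+\ph(x,T)+o(T)}$ on a set of $(x,T)$ of full $\mu$-density.

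Finally, comparing the two bounds on a set of full $\mu$-measure and positive $m$-measure (matching the scales $\eta$ and $\eps$ via conditions (3)--(4)) shows $\mu\ll m$ with Radon--Nikodym derivative bounded above and below; since both are ergodic and $F$-invariant, $d\mu/dm$ is invariant, hence a.e.\ constant, so $\mu=m$, which proves uniqueness. The step I expect to be the main obstacle is carrying the \emph{non-uniform} control functions $h_\eta$ and $g$ --- both allowed to be unbounded --- through the counting and covering arguments while keeping every error factor of order $e^{o(T)}$: in the lower bound this demands a delicate trade-off between the number of glued pieces and their common length, and in the upper bound one must reconcile two genuinely different scales, covering at the expansivity scale $\eps$ while the spliced orbits are only $\eta$-controlled. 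Engineering the two-scale pressure $P(\cdot,\ph,\eta,\eps)$ so that it is simultaneously large enough for the lower bound and small enough for the upper bound on the right sets, and then matching the $\eta\to0$ limits, is where essentially all the technical work lies.
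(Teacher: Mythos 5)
The paper does \emph{not} construct a reference Gibbs measure $m$ and prove $\mu\ll m$. It argues uniqueness by contradiction: assume two distinct ergodic equilibrium states $\mu,\nu$ exist, take disjoint compact subsets $G_\mu,G_\nu$ of their respective generic point sets, use the maximal ergodic theorem to thicken them to sets $G^-_\mu,G^+_\nu$ that stay $\eta$-separated under flowing, prove a lower bound on partition sums of $\mathcal G^1$-segments that start near $G^+_\nu$ or end near $G^-_\mu$ (an adapted-partition argument in the style of Lemma 4.18 of CT16, which here is Proposition~3.6), glue one segment from each family with weak controlled specification, and count. The resulting separated set is too large, contradicting the global upper bound $\Lambda_t(X,10\eta,\tfrac\eps2)\le e^{C_0(h(t)+g(t))+tP}$ obtained from specification (Lemma~3.4). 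The scale $\eta$ is chosen \emph{after} $G_\mu,G_\nu$ are fixed ($\eta\lesssim d(G_\mu,G_\nu)$), and the subexponential error $e^{O(h(t_k)+g(t_k))}$ is absorbed by choosing iconic times $t_k$ along the $\liminf$ sequence in condition~(3). Your existence argument and your instinct that the counting must be done along a sparse sequence of times with care about $h_\eta,g$ match the paper; but the uniqueness mechanism is genuinely different.

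Your proposed final step --- deduce $\mu\ll m$ from a lower Gibbs bound for $m$ at scale $\eta$ and an upper Gibbs bound for $\mu$ at scale $\eps$, then invoke ergodicity of the Radon--Nikodym derivative --- has a real gap in this setting, and you already put your finger on where. First, the two Gibbs estimates carry $e^{o(T)}$ errors (not bounded constants as in Bowen's expansive case), so even at matched scales you would only obtain $\mu(B_T(x,\eps))\le e^{o(T)}\,m(B_T(x,\eps))$, which is vacuous as $T\to\infty$. Second, and more fundamentally, absolute continuity of one measure with respect to another cannot be read off from a comparison of Bowen-ball measures unless Bowen balls at the relevant scale generate the Borel $\sigma$-algebra --- and under mere entropy expansiveness (say a constant suspension) the sets $\Gamma_\eps(x)$ have positive diameter, so $B_T(x,\eps)$ does not shrink to a point and this generation fails. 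Third, the scale mismatch only goes the wrong way: $B_T(x,\eta)\subset B_T(x,\eps)$, so a lower bound on $m(B_T(x,\eta))$ combined with an upper bound on $\mu(B_T(x,\eps))$ gives no pointwise domination. The paper's contradiction-by-overcounting scheme avoids all three problems: it never needs Bowen balls to separate points, it never compares the two measures directly, and the $e^{o(T)}$ errors only have to beat a fixed polynomial at the iconic times $t_k$, which is exactly what $\liminf_{t\to\infty}\frac{h_\eta(t)+g(t)}{\log t}=0$ supplies. To turn your outline into a proof you would have to replace the $\mu\ll m$ step by a mechanism that does not rely on Bowen balls generating the Borel $\sigma$-algebra.
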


As an immediate corollary, by taking $\mathcal{D}=X\times \mathbb{R}^+$, $\mathcal{P}=\mathcal{S}=\emptyset$, and $\ph=0$, we have the following criteria in formulating the corresponding result for \emph{measure of maximal entropy}.
\begin{thma} \label{thm:mme}
    Let $(X,F)$ be a continuous flow on a compact metric space. Suppose there exists some $\eps>0$ at which $(X,F)$ is entropy expansive, and for every $\eta>0$, the flow satisfies weak controlled specification at scale $\eta$ with gap function $h_{\eta}$ satisfying $\liminf_{t\to \infty}\frac{h_{\eta}(t)}{\log t}=0$. Then $(X,F)$ has a unique measure of maximal entropy.
\end{thma}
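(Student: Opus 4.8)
The plan is to obtain Theorem~\ref{thm:mme} as a direct specialization of Theorem~\ref{thm:a}. I would apply Theorem~\ref{thm:a} with $\mathcal{D} = X\times\mathbb{R}^+$, the trivial orbit decomposition $(\mathcal{P},\mathcal{G},\mathcal{S}) = (\emptyset,\, X\times\mathbb{R}^+,\, \emptyset)$, and the zero potential $\varphi\equiv 0$. The work then reduces to checking that hypotheses (1)--(4) of Theorem~\ref{thm:a} hold under the assumptions of Theorem~\ref{thm:mme}, and to identifying the equilibrium state that is produced.

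Concretely: hypothesis (1), applied to $\mathcal{G} = X\times\mathbb{R}^+$, is literally the assumption of Theorem~\ref{thm:mme} that the flow satisfies weak controlled specification at every scale $\eta$ with gap function $h_\eta$. For hypothesis (2), taking $\varphi\equiv 0$ one may choose the distortion function $g\equiv 0$, so the $g$-distortion (Bowen-type) bound over $X\times\mathbb{R}^+$ is trivial at scale $\eps$. Hypothesis (3) then reads $\liminf_{t\to\infty}\frac{h_\eta(t)+0}{\log t}=\liminf_{t\to\infty}\frac{h_\eta(t)}{\log t}=0$, which is exactly the hypothesis imposed on $h_\eta$. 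For hypothesis (4): since $\mathcal{D}=X\times\mathbb{R}^+$ we have $\mathcal{D}^c=\emptyset$, while $[\mathcal{P}]$ and $[\mathcal{S}]$ are empty because they are built from the empty collections $\mathcal{P}$ and $\mathcal{S}$; hence $\mathcal{D}^c\cup[\mathcal{P}]\cup[\mathcal{S}]=\emptyset$ and its pressure $P(\emptyset,\varphi,\eta,\eps)$ equals $-\infty$ for every $\eta$. Since $X$ is a nonempty compact space and $(X,F)$ is entropy expansive, $P(\varphi)=h_{\mathrm{top}}(F)$ is finite and $\geq 0$, so $\lim_{\eta\to 0}P(\emptyset,\varphi,\eta,\eps)=-\infty<P(\varphi)$, and (4) holds.

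By Theorem~\ref{thm:a}, $(X,F,0)$ then has a unique equilibrium state. Finally, by the variational principle an invariant probability measure is an equilibrium state for the zero potential precisely when it attains $h_{\mathrm{top}}(F)$, i.e.\ precisely when it is a measure of maximal entropy; so uniqueness of the former is uniqueness of the latter, which is the assertion of Theorem~\ref{thm:mme}. I do not expect a genuine obstacle here, as the argument is essentially a bookkeeping verification of the hypotheses of Theorem~\ref{thm:a}; the only points deserving a line of care are confirming that the two-scale pressure of the empty collection is indeed $-\infty$ (or at any rate $<P(\varphi)$) under the conventions fixed in the body of the paper, and recalling that existence of at least one maximizing measure is guaranteed by upper semicontinuity of $\mu\mapsto h_\mu(F)$, which follows from entropy expansiveness.
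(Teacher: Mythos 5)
Your proof is correct and is exactly the specialization the paper itself makes: the paper derives Theorem~\ref{thm:mme} from Theorem~\ref{thm:a} by taking $\mathcal{D}=X\times\mathbb{R}^+$, $\mathcal{P}=\mathcal{S}=\emptyset$, and $\varphi=0$, calling it an immediate corollary. The only cosmetic point is that the paper's standing convention requires $g\geq 1$, so one should take, say, $g\equiv 1$ rather than $g\equiv 0$; this changes nothing since $\liminf_{t\to\infty}(h_\eta(t)+1)/\log t = \liminf_{t\to\infty}h_\eta(t)/\log t = 0$.
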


Unlike \cite[Theorem 2.9]{CT16} and \cite{PYY22}, our specification property must be assumed at all scales. Otherwise, uniqueness of equilibrium states can easily fail if specification only holds at a fixed scale. For instance, one may think of a constant suspension over a transitive Anosov diffeomorphism on a compact connected metric space as an easy counterexample.

If we keep the old non-uniform expansiveness assumption from \cite{CT16}, the above criteria still holds, with an even weaker regularity assumption on $\ph$.
\begin{thma} \label{thm:a'}
    Let $(X,F)$ be a continuous flow on a compact metric space, and $\ph:X\to \mathbb{R}$ be a continuous potential. Suppose there exists some $\eps>0$ such that $P_{\exp}^{\perp}(\ph,\eps)<P(\ph)$, and there exists $\mathcal{D}\subset X\times \mathbb{R}^+$ which admits a decomposition $\mathcal{P},\mathcal{G},\mathcal{S}$ satisfying conditions (1), (3) and (4) as in Theorem \ref{thm:a}, as well as $\ph$ being $g$-distorted over just $\mathcal{G}$ at scale $\eps$. Then $(X,F,\ph)$ has a unique equilibrium state.
\end{thma}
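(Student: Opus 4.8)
The plan is to follow the proof of Theorem~\ref{thm:a} essentially line by line, the only substantive change being that every appeal to entropy expansiveness is replaced by an appeal to the inequality $P_{\exp}^{\perp}(\ph,\eps)<P(\ph)$, and the distortion of $\ph$ is needed only along $\mathcal{G}$. In the proof of Theorem~\ref{thm:a}, entropy expansiveness at scale $\eps$ plays exactly two roles: it makes the free-energy functional $\mu\mapsto h_{\mu}(F)+\int\ph\,d\mu$ upper semicontinuous (a quick route to existence), and, more essentially, in the uniqueness argument it guarantees that for an ergodic equilibrium state $\nu$ the dynamical balls of radius $\eps$ already capture the full entropy, so that $\nu$ carries an upper Gibbs bound at scale $\eps$. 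The second role is precisely the mechanism of \cite{CT16}, and it is supplied directly by the hypothesis $P_{\exp}^{\perp}(\ph,\eps)<P(\ph)$: no invariant measure supported on the non-expansive set at scale $\eps$ can come within $\delta$ of $P(\ph)$ in free energy, so every equilibrium state is ``almost expansive at scale $\eps$'' and the entropy and pressure estimates at that scale go through unchanged. Existence is then obtained not from upper semicontinuity but from the construction below. Finally, in the proof of Theorem~\ref{thm:a} the distortion of $\ph$ away from $\mathcal{G}$ is used only in passing from the fixed scale $\eps$ down to the fine scales imposed by entropy expansiveness; with that passage removed, distortion is invoked only on $\mathcal{G}$-orbit segments, which is why hypothesis~(2) may be weakened as stated.

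First I would carry out the construction of the candidate equilibrium state. For each $\eta\in(0,\eps)$, use weak controlled specification on $\mathcal{G}$ at scale $\eta$, together with the decomposition $(\mathcal{P},\mathcal{G},\mathcal{S})$, to form from maximal $(t,\eta)$-separated subsets of the $\mathcal{G}$-cores of orbit segments in $\mathcal{D}$ a sequence of measures normalized by the partition sum $\Lambda(\mathcal{G},\ph,t,\eta)$, and pass to a weak-$*$ subsequential limit of Ces\`aro averages to obtain an $F$-invariant measure $\mu_{\eta}$. Hypothesis~(4) --- the pressure gap as $\eta\to 0$ --- forces the mass of $\mu_{\eta}$ to concentrate, in the limit, on orbit segments whose $\mathcal{G}$-core has proportion tending to $1$, and shows that the free energy of $\mu_{\eta}$ tends to $P(\ph)$ as $\eta\to 0$; a diagonal argument yields an equilibrium state $\mu$. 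Using $g$-distortion of $\ph$ on $\mathcal{G}$ together with hypothesis~(3) --- the sublogarithmic growth of $h_{\eta}+g$ --- one then establishes a lower Gibbs bound: along a nonincreasing sequence of scales and with subexponential errors, $\mu$ of the dynamical ball $\{y:d(f_s y,f_s x)<\eta,\ 0\le s\le t\}$ is at least $\exp\bigl(-tP(\ph)+\int_0^t\ph(f_s x)\,ds-o(t)\bigr)$ for every $(x,t)$ that decomposes with a long $\mathcal{G}$-core. This is exactly where ``controlled'' specification, rather than ordinary specification, is accommodated: the gap/transition time is allowed to grow with $t$, and hypothesis~(3) is the precise requirement that this growth, together with the distortion, contributes only a subexponential factor.

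For uniqueness, let $\nu$ be an ergodic equilibrium state. The hypothesis $P_{\exp}^{\perp}(\ph,\eps)<P(\ph)$ furnishes $\nu$ with an upper Gibbs bound at scale $\eps$ on a set of full measure; combining this with the pressure gap~(4), the argument of \cite{CT16} shows that $\nu$-almost every point $x$ has, for all large $t$, an orbit segment $(x,t)$ whose $\mathcal{G}$-core occupies a proportion tending to $1$. On this $\nu$-full set the lower Gibbs bound for $\mu$ applies, and a Besicovitch-type covering argument comparing $\mu$ and $\nu$ on small dynamical balls at scale $\eps$ yields $\nu\ll\mu$; ergodicity of $\nu$ then gives $\nu=\mu$. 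Since every equilibrium state is an average of ergodic ones, $\mu$ is the unique equilibrium state, and existence is part of the construction.

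I expect the uniqueness step to be the main obstacle, for two reasons that interact. First, the ``$\mathcal{G}$ dominates along generic orbits'' argument of \cite{CT16} originally uses expansiveness-type control, whereas here one has only the obstruction bound at the single scale $\eps$, while the specification --- and hence the Gibbs lower bound --- live at scales $\eta<\eps$; reconciling the two requires carefully tracking how dynamical balls at scale $\eps$ decompose into unions of balls at scale $\eta$, using that weak controlled specification produces only $\eta$-shadowing. Second, the comparison between $\mu$ and $\nu$ must survive the subexponential error $e^{o(t)}$ coming from $h_{\eta}$ and $g$; ensuring that this error stays genuinely $o(t)$, and not merely $O(t)$, uniformly along the separated sets used in the covering argument is where hypothesis~(3) is used in its sharpest form, and is the technical heart of the proof, already present in the approach of \cite{Pav16,Pav19}.
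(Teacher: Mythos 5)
Your identification of the two roles played by the expansiveness hypothesis, and your observation that $P_{\exp}^{\perp}(\ph,\eps)<P(\ph)$ can substitute for entropy expansiveness in both, is correct and matches the paper's logic: the paper in fact states and proves a single general result (Theorem~\ref{thm:weprove}), whose hypotheses are $P(\ph,\eta)=P(\ph)$ for $\eta<\eps/2$ and that adapted partitions at scale $\eps$ compute entropy for almost entropy-expansive ergodic measures, and then deduces both Theorem~\ref{thm:a} and Theorem~\ref{thm:a'} by checking (Proposition~\ref{propexpansivegood}, citing CT16 Thm.~3.2 and Prop.~3.7) that each expansiveness assumption implies those two hypotheses. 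Your remark that the distortion of $\ph$ outside $\mathcal{G}$ is only needed in the entropy-expansive case, for the scale-reduction step in Proposition~\ref{propexpansivegood}(1), is also accurate.

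Where your proposal diverges — and where I think there is a genuine gap — is the uniqueness argument. You propose the classical Bowen/Climenhaga--Thompson route: construct a candidate $\mu$ with a lower Gibbs bound at scale $\eta$, show any ergodic equilibrium state $\nu$ has an upper Gibbs bound at scale $\eps$, and conclude by a Besicovitch-type covering comparison. With controlled specification the gap function is allowed to grow with $t$, so both Gibbs bounds carry errors of size $e^{O(h_\eta(t)+g(t))}$; along the subsequence of Hypothesis~(3) these are $t^{o(1)}$, hence subexponential but unbounded. You flag this as the technical heart of the proof but do not say how to overcome it, and a direct covering comparison does not obviously survive a divergent multiplicative error. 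The paper's proof of Theorem~\ref{thm:weprove} does something structurally different: it assumes two ergodic equilibrium states $\mu\neq\nu$, uses the maximal ergodic theorem to isolate positive-measure sets $G_\mu^-$ and $G_\nu^+$ on which backward/forward ergodic averages are controlled, and at an ``iconic'' time $t_k$ along the good subsequence glues $(\eta,\cdot)$-separated $\mathcal{G}^1$-heavy orbit segments near $G_\mu^-$ to ones near $G_\nu^+$, summing over an array of $T_G\sim t_k/(h(t_k)+g(t_k))$ different time-split offsets $l$. This produces a family of glued points of total weight roughly $t_k\cdot e^{(2t_k/3)P}\cdot e^{-O(h(t_k)+g(t_k))}$, and the extra polynomial factor $t_k/(h(t_k)+g(t_k))$ is precisely what defeats the subexponential overcounting $e^{O(h(t_k)+g(t_k))}$ and yields a contradiction with the upper partition-sum bound (Lemma~\ref{lemmaXpartitionsum}). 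Your proposal has no analogue of this multi-offset counting step, so as written it does not close the argument; this mechanism (or something playing its role, as in Pavlov's work) is the missing idea.
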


Both Theorems \ref{thm:a} and \ref{thm:a'} serve as consequences of the following more general result, with details being clarified in $\mathsection 2.4$ and $\mathsection 3$.

\begin{thma} \label{thm:weprove}
     Let $(X,F)$ be a continuous flow on a compact metric space, and $\ph:X\to \mathbb{R}$ be a continuous potential. Suppose there exists some $\eps>0$ 
     such that
     $$
P(\ph,\eta)=P(\ph) \text{ for all }\eta\in (0,\eps/2)
     $$
     and for every $t>0$, any finite partition $\mathcal{A}_t$ with $\text{Diam}_t(\mathcal{A}_t)<\eps$, and any $\mu\in \mathcal{M}_F^e(X)$ being almost entropy expansive at scale $\eps$, we have
     $$
h_{\mu}(f_t,\mathcal{A}_t)=h_{\mu}(f_t),
     $$
     plus the existence of some $\mathcal{D}\subset X\times\mathbb{R}^+$ admitting a decomposition $(\mathcal{P},\mathcal{G},\mathcal{S})$ satisfying the following conditions:
    \begin{enumerate}
        \item For every $\eta>0$, $\mathcal{G}$ satisfies weak controlled specification at scale $\eta$ with gap function $h_{\eta}$.
        \item $\ph$ is $g$-distorted over $\mathcal{G}$ at scale $\eps$.
        \item For every $\eta>0$, $h_{\eta}$ and $g$ satisfy $\liminf_{t\to \infty}\frac{h_{\eta}(t)+g(t)}{\log t}=0$.
        \item $\lim_{\eta\to 0}P(\mathcal{D}^c\cup [\mathcal{P}]\cup [\mathcal{S}],\ph,\eta,\eps)<P(\ph)$.
    \end{enumerate}
    Then $(X,F,\ph)$ has a unique equilibrium state.
\end{thma}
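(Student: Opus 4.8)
The plan is to run the Climenhaga--Thompson orbit-decomposition argument of \cite{CT16}, with two substitutions forced by the present hypotheses. First, all of the Bowen-type counting, covering and Gibbs estimates must be arranged so as to absorb a transition gap $h_\eta$ and a distortion $g$ that are allowed to grow; condition (3) is precisely what makes them grow sub-logarithmically along a suitable sequence of times, so that the relevant exponential rates are unaffected. Second, the role played in \cite{CT16} by expansiveness at a fixed scale will be taken over by the two structural hypotheses on the scaled pressure $P(\ph,\eta)$ and on the metric entropy, which I will invoke only in one form: for an ergodic $\mu$ that is almost entropy expansive at scale $\eps$, the metric entropy hypothesis licenses the Brin--Katok local entropy formula at the \emph{fixed} scale $\eps$, i.e.\ $h_\mu(F)=\lim_{t\to\infty}-\tfrac1t\log\mu(B_t(x,\eps))$ for $\mu$-a.e.\ $x$.

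I would begin by extracting from (4) numbers $\eta_0>0$ and $\delta>0$ with $P(\mathcal{D}^c\cup[\mathcal{P}]\cup[\mathcal{S}],\ph,\eta,\eps)<P(\ph)-\delta$ for all $\eta<\eta_0$, and deducing (via the decomposition of $\mathcal{D}$ into $(\mathcal{P},\mathcal{G},\mathcal{S})$ and a standard pressure estimate) that $\mathcal{G}$ carries the full pressure. \emph{Existence} then follows the usual route: pick a sequence $t_n\to\infty$ realizing the $\liminf$ in (3), choose maximal $(\eps,t_n)$-separated sets $E_n\subseteq\{x:(x,t_n)\in\mathcal{G}\}$ of $\ph$-weight at least $e^{t_n(P(\ph)-o(1))}$, form the weighted empirical measures $\mu_n$ and their flow averages $\nu_n=\tfrac1{t_n}\int_0^{t_n}(f_s)_*\mu_n\,ds$, and take a weak-$*$ limit point $\mu^*$. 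A Misiurewicz-type estimate against a partition of small diameter gives $h_{\mu^*}(F)+\int\ph\,d\mu^*\geq P(\ph)$, so $\mu^*$ is an equilibrium state; note this uses neither specification at more than one scale nor the entropy hypothesis.

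For \emph{uniqueness}, assume there are two distinct equilibrium states; by affinity of $\mu\mapsto h_\mu(F)+\int\ph\,d\mu$ and ergodic decomposition there are two distinct \emph{ergodic} ones $\mu_1\neq\mu_2$, hence mutually singular. Fix an ergodic equilibrium state $\mu$. Using Brin--Katok at scale $\eps$ together with a Katok-type covering estimate and (4), I would show that $\mu$-typical orbit segments cannot, along $t\to\infty$, have an asymptotically full fraction of their length lying in $\mathcal{D}^c\cup[\mathcal{P}]\cup[\mathcal{S}]$, since otherwise $h_\mu(F)+\int\ph\,d\mu\le\lim_{\eta\to0}P(\mathcal{D}^c\cup[\mathcal{P}]\cup[\mathcal{S}],\ph,\eta,\eps)<P(\ph)$; thus for $\mu$-a.e.\ $x$ there is a sequence $t\to\infty$ on which $(x,t)$ decomposes with a $\mathcal{G}$-core of length $(1-o(1))t$. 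Next, for each fixed $\eta\in(0,\eta_0)$ I would establish weak Gibbs bounds on a $\mu$-full set, along a sequence $t\to\infty$:
\[
e^{-o(t)}\,e^{-tP(\ph)+\ph_t(x)}\ \le\ \mu\!\left(B_t(x,\eta)\right)\ \le\ e^{\,o(t)}\,e^{-tP(\ph)+\ph_t(x)},
\]
the upper bound from $P(\ph,\eta)=P(\ph)$ (using $\eta<\eps/2$), the $g$-distortion of $\ph$ over $\mathcal{G}$, and the sublinearity of the $\mathcal{P}$- and $\mathcal{S}$-parts from the previous step, and the lower bound from using weak controlled specification at scale $\eta$ to glue good cores of $\mu$-typical points (transition times $\le h_\eta(\cdot)$, distortion along the shadows $\le g(\cdot)$) and then restricting to the subsequence of (3), along which $h_\eta(t)+g(t)=o(\log t)$ turns the accumulated errors into a factor $t^{o(1)}$ rather than an exponential one.

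Applying the last two steps to both $\mu_1$ and $\mu_2$, one gets for every $\eta\in(0,\eta_0)$ a common full set and a sequence $t\to\infty$ on which $\mu_1$ and $\mu_2$ are comparable on Bowen $\eta$-balls up to a factor $e^{o(t)}$; from here I would run a Bowen-type comparison of the two measures on the algebra of Bowen balls to upgrade this to mutual absolute continuity. This last step is where the main obstacle lies: because $(X,F)$ is only entropy expansive, the Bowen $\eps$-balls do not shrink to points, so unlike in \cite{CT16} the comparison cannot be settled at a single scale --- it has to be carried out at \emph{every} scale $\eta$ and then passed to the limit $\eta\to0$, where the Bowen balls at all scales separate points, to conclude $\mu_1\ll\mu_2$ and therefore $\mu_1=\mu_2$, contradicting their distinctness. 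Besides this, two points of bookkeeping deserve care throughout: organising the estimates so that the growing but sub-logarithmic gap and distortion never corrupt an exponential rate (the reason (3) is a $\liminf$ rather than a limit), and checking that the equilibrium states --- and the measure $\mu^*$ above --- are almost entropy expansive at scale $\eps$, so that the entropy hypothesis applies to them. Finally, Theorems \ref{thm:a} and \ref{thm:a'} are recovered by verifying that entropy expansiveness at $\eps$ (resp.\ $P_{\exp}^{\perp}(\ph,\eps)<P(\ph)$) implies the two structural hypotheses assumed here.
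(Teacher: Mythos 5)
Your proposal diverges from the paper's actual proof at the crucial uniqueness step, and the route you take contains a gap that you acknowledge but do not close.

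The paper does not argue via a weak Gibbs property. Instead it follows the Climenhaga--Thompson two-measure gluing/counting scheme: Lemma \ref{lemmaGupperbound}--Lemma \ref{lemmaXpartitionsum} give a global \emph{upper} bound $\Lambda_t(X,10\eta,\eps/2)\le e^{C_0(h(t)+g(t))+tP}$ along iconic times; Proposition \ref{propkey} uses the entropy hypothesis solely through an adapted partition $\xi_t$ of $d_t$-diameter $<\eps/2$ (so that $h_\mu(f_t,\xi_t)=h_\mu(f_t)$) to produce a \emph{lower} bound $\Lambda_t(A,10\eta,\eps/2)\ge e^{-C_\beta(h(t)+g(t))+tP}$ on any set $A$ with $\mu(A)>\beta$ for an equilibrium state $\mu$. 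Assuming two ergodic equilibria $\mu\neq\nu$, the paper then picks compact, $7\eta$-separated sets $G_\mu\subset E_\mu$, $G_\nu\subset E_\mu^c$, shrinks them to sets $G_\mu^-$, $G_\nu^+$ on which the backward/forward maximal Birkhoff averages are controlled, extracts $\mathcal{G}^1$-cores from each via Corollary \ref{coro:LambdaG1lowerbound}, and glues $G_\mu^-$-segments to $G_\nu^+$-segments across a ladder of time lengths $t_\mu^l,\,t_\nu^l$. The key injectivity claim says that the glued points for different $l$ are $(5\eta,s_q)$-separated because the orbit spends a definite fraction of time near the disjoint sets $G_\mu,G_\nu$; summing over $l$ then beats the upper bound from Lemma \ref{lemmaXpartitionsum} and gives the contradiction. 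Nowhere is a two-sided Gibbs estimate on Bowen balls extracted, nor is absolute continuity argued.

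Your proposal replaces this with a Bowen/Pavlov-style Gibbs argument: establish weak Gibbs bounds at every small scale $\eta$ and conclude $\mu_1\ll\mu_2$ by passing $\eta\to 0$. The gap is exactly where you flag it, and I do not think it can be dismissed as bookkeeping. Three problems: (a) The Gibbs \emph{upper} bound $\mu_1(B_t(x,\eta))\le e^{o(t)}e^{-tP+\Phi_t(x)}$ derived from Shannon--McMillan--Breiman or a Brin--Katok argument holds for $\mu_1$-a.e.\ $x$; but to compare $\mu_1$ with $\mu_2$ you need it at $\mu_2$-typical $x$, which is a $\mu_1$-null set (since $\mu_1\perp\mu_2$), and with specification only over $\mathcal{G}$ you cannot upgrade it to a pointwise bound on all of $X$. (b) Even granting weak Gibbs bounds for both measures at every scale, the sequence of admissible times (the iconic $t$'s realizing the $\liminf$ in (3)) and the $e^{o(t)}$ error both depend on $\eta$, so there is no uniformity to exploit when $\eta\to 0$; the two limits $t\to\infty$ and $\eta\to 0$ do not obviously commute. (c) Because the system is only entropy expansive, at any fixed scale the Bowen balls do not shrink to points but to sets $\Gamma_\eta(x)$ which may carry mass; passing comparison of measures of Bowen balls to comparison of arbitrary Borel sets requires a generator, which does not exist here. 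You notice (c), call it the ``main obstacle'', and then wave at ``passing to the limit $\eta\to 0$'' without an argument; that is the unclosed gap. The paper's gluing argument is engineered precisely to avoid having to establish a Gibbs property: it produces a direct contradiction against the upper bound on $\Lambda_t(X,\cdot)$ and never needs estimates at atypical points or at all scales simultaneously.

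A smaller imprecision: your invocation of ``Brin--Katok at fixed scale $\eps$'' from the hypothesis $h_\mu(f_t,\mathcal{A}_t)=h_\mu(f_t)$ is not quite what the paper does; the paper feeds this hypothesis directly into the adapted-partition inequality in Proposition \ref{propkey}. A Brin--Katok-type statement could probably be extracted with care (and a change of scale), but it is not what the estimate needs, and it does not by itself supply the Gibbs upper bound with the potential term for non-typical $x$. Your existence sketch and the reduction of Theorems \ref{thm:a} and \ref{thm:a'} to Theorem \ref{thm:weprove} are fine and essentially match the paper.
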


We also highlight two examples to which the above results apply.
\begin{enumerate}
      \item Suspension flow $(X,f,r)$ with $f$ being an Lipschitz homeomorphism over compact $X$ and roof function $r$ satisfying a weak regularity condition, namely \emph{weak Walters condition}. Details will be explored in $\mathsection 4.1$.
    \begin{thma}  \label{thm:suspension}
    Let $(X,f)$ be a Lipschitz homeomorphism over compact metric space, and $r\in C(X,\mathbb{R}^+)$. Suppose that $(X,f,r)$ satisfies the following conditions:
    \begin{itemize}
        \item $(X,f)$ is expansive. 
        \item For every $\delta>0$, there exists a monotonically increasing $h=h_{\delta}:\mathbb{N}^+\to \mathbb{N}^+$ such that $\liminf\frac{h(n)}{\log n}=0$, and $(X,f)$ satisfies weak controlled specification at scale $\delta$ with gap function $h$. 
        \item $r$ satisfies weak Walters condition.
    \end{itemize}
    Then the associated suspension flow $(X_r,F)$ has a unique measure of maximal entropy.
    \end{thma}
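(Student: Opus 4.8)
The plan is to derive Theorem \ref{thm:suspension} from the measure of maximal entropy criterion, Theorem \ref{thm:mme}, applied to the suspension flow $(X_r,F)$. Thus two properties must be established: that $(X_r,F)$ is entropy expansive at some scale $\eps>0$, and that for every $\eta>0$ the flow $F$ satisfies weak controlled specification at scale $\eta$ with a gap function $h_\eta^F$ satisfying $\liminf_{t\to\infty} h_\eta^F(t)/\log t=0$. Once both are in place, Theorem \ref{thm:mme} gives the conclusion, the only bookkeeping point being the (harmless) exchange of the ``for every $\eta>0$'' quantifier of Theorem \ref{thm:mme} with the ``for every $\delta>0$'' quantifier in the hypothesis on $(X,f)$, via a choice $\delta=\delta(\eta)\to 0$ as $\eta\to 0$.

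For the entropy expansiveness, I would work with the Bowen--Walters metric on $X_r$ and use that $r$ is continuous and positive on the compact space $X$, hence $r_{\min}\le r\le r_{\max}$ for some $0<r_{\min}\le r_{\max}<\infty$. The key claim is that the set of points which flow-shadow a given point of $X_r$ for all time within a small tube projects, via the base, into the set of points that stay $c$-close along the entire $f$-orbit (where $c$ is the expansivity constant of $(X,f)$), and is therefore contained in a single orbit segment of $F$ of uniformly bounded length; this yields vanishing tail entropy at the corresponding scale $\eps$, i.e. entropy expansiveness. The subtle point, and where I expect the bulk of the work in $\mathsection 4.1$ to lie, is that one cannot upgrade this to genuine expansiveness of $F$: without the full Walters condition the time reparametrization between shadowing points may be too irregular. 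The weak Walters condition on $r$ (together with the Lipschitz property of $f$, used to compare base Bowen balls with flow Bowen balls) is precisely what is needed to bound the length of that orbit segment uniformly, and hence is the crux of the entropy-expansiveness step.

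For lifting weak controlled specification, given $\eta>0$ I would first use the weak Walters condition and the bounds on $r$ to choose $\delta=\delta(\eta)>0$ so small that $\delta$-closeness of base points together with a small controlled time shift produces $\eta$-closeness in $X_r$. Given flow orbit segments $(x_i,t_i)$, replace each by a base orbit segment $(x_i,n_i)$ with $n_i$ the number of $r$-returns realized within flow time $t_i$, so that $n_i\asymp t_i/r_{\min}$; apply weak controlled specification of $(X,f)$ at scale $\delta$ with gap function $h=h_\delta$ to glue the $(x_i,n_i)$ into a single base orbit with transition gaps bounded by $h(\cdot)$, and then push this orbit up to the suspension. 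The weak Walters condition keeps the accumulated discrepancy in the Birkhoff sums $S_n r$ bounded, so the realized flow times agree with the prescribed $t_i$ up to a fixed additive constant, and the gap function for $F$ can be taken of the form $h_\eta^F(t)=r_{\max}\,h(\lceil t/r_{\min}\rceil)+C$. Since $\log\lceil t/r_{\min}\rceil\sim\log t$ as $t\to\infty$, the hypothesis $\liminf h(n)/\log n=0$ on the base passes to $\liminf_{t\to\infty} h_\eta^F(t)/\log t=0$, which is exactly the sublogarithmic growth required by Theorem \ref{thm:mme}. Assembling the two steps completes the proof.
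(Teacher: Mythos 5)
Your overall strategy is correct and matches the paper's: derive Theorem~\ref{thm:suspension} from Theorem~\ref{thm:mme} by establishing (i) entropy expansiveness of $(X_r,F)$ at some scale, and (ii) weak controlled specification for $(X_r,F)$ at every scale with a sublogarithmic gap function. The reduction itself is sound, and the quantifier bookkeeping you flag is harmless. However, you misassign the roles of the hypotheses, and your specification sketch has a real gap.

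First, the weak Walters condition is \emph{not} the crux of the entropy-expansiveness step, contrary to what you claim. The paper's Lemma~\ref{lem:non-expansiveset} and Proposition~\ref{expan} use only expansiveness of $(X,f)$, uniform continuity of $r$ (which gives bounds like $d(x,y)<\eta_0\Rightarrow |r(x)-r(y)|<\delta_0$), and the Lipschitz property of $f$ via the metric comparison of Lemma~\ref{lem:distancecomparison}. Your worry about ``the time reparametrization between shadowing points being too irregular'' does not arise here, because a point $q\in\Gamma^F_{\eps'}(p)$ is by definition $\eps'$-close in $X_r$ for \emph{all} time; the phase offset at each return is therefore re-clamped to size $O(\delta_0)$ at every step of the iteration and never accumulates. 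The orbit-segment length $2\delta_0$ in the conclusion comes from continuity of $r$ alone.

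Second, your specification-lifting argument is incomplete at the step where it matters. You apply base specification to the segments $(x_i,n_i)$ and then assert that ``the weak Walters condition keeps the accumulated discrepancy in the Birkhoff sums $S_nr$ bounded.'' But the weak Walters condition at scale $\delta$ only controls $|S_nr(x)-S_nr(y)|$ when $y$ shadows $x$ on the \emph{extended} window $[-k(n)+1-N,\, n+k(n)+N-1]$ (with an extra margin $N=N(\eta)$; see Lemma~\ref{lem:partialsumrvariation} and Corollary~\ref{coro:distanceXrclose}). Shadowing only on $[0,n_i-1]$, which is all the base specification applied directly to $(x_i,n_i)$ delivers, gives you nothing from the weak Walters hypothesis. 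The paper therefore first replaces each $(x_i,n_i)$ by the stretched segment $\bigl(f^{-(N+N_1+k(n_i)-1)}(x_i),\ n_i+2(N+N_1+k(n_i))-1\bigr)$ and applies base specification to those; this stretching is where the $k$-terms enter the flow gap function, giving $h_r(t)=2L\bigl(N+N_1+h(\lceil t+2L\rceil)+k(\lceil t+2L\rceil)\bigr)$ rather than your $r_{\max}h(\lceil t/r_{\min}\rceil)+C$. Since $k$ is also sublogarithmic this does not affect the $\liminf$ requirement, but it is essential for the shadowing estimate to go through, and is precisely the technical content of Lemmas~\ref{lem:partialsumrvariation}--\ref{lem:distanceXrclose} and Proposition~\ref{lem:suspensionspecification} that your sketch skips.
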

    \item Topologically transitive frame flows on rank one manifolds of nonpositive curvature, satisfying a bunched curvature assumption.  Details will be explored in $\mathsection 4.2$.
    \begin{thma}\label{frameflow}
Let $M$ be a closed, oriented, nonpositively curved $n$-manifold, with bunched curvature. Suppose that the frame flow $F^t:FM\to FM$ is topologically transitive, and $\tilde\varphi: FM\to \mathbb{R}$ is a continuous potential that is constant on the fibers of the bundle $FM\to SM$, whose projection $\varphi :SM\to \RR$ is of the form $\varphi=q\phi_u$ or is H\"{o}lder continuous. Moreover, assume that $P(g^t, \text{Sing},\varphi) <P(g^t, \varphi)$. Then there is a unique equilibrium measure for $(F^t, \tilde\varphi)$. 
\end{thma}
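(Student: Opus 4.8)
The plan is to verify the hypotheses of Theorem~\ref{thm:a} for the triple $(FM, F^t, \tilde\varphi)$, using that the bundle projection $p\colon FM\to SM$ is a compact isometric extension with fibre $SO(n-1)$ intertwining the frame flow $F^t$ with the geodesic flow $g^t$, and importing the thermodynamic structure of $(SM,g^t,\varphi)$ from the analysis of nonpositively curved geodesic flows by Burns, Climenhaga, Fisher and Thompson (BCFT).

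\textbf{Entropy expansiveness.} First I would record that $g^t$ on $SM$ is entropy expansive at some scale $\eps_0>0$, a known fact for compact rank one nonpositively curved manifolds (topological transitivity of $F^t$ forces $M$ to have rank one). Since $p$ is a Riemannian submersion with compact fibres on which $F^t$ acts by isometries, any Bowen $(\eta,T)$-ball for $F^t$ about $v$ projects into a Bowen $(\eta,T)$-ball for $g^t$ about $p(v)$ with fibre of diameter bounded independently of $T$; hence the local (tail) entropy satisfies $h^*_{F^t}(v)\le h^*_{g^t}(p(v))=0$ for every $v$, so $F^t$ is entropy expansive at a scale $\eps$ comparable to $\eps_0$, supplying the entropy expansiveness hypothesis.

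\textbf{The decomposition.} By the BCFT analysis, orbit segments of $g^t$ carry a decomposition $(\mathcal P_0,\mathcal G_0,\mathcal S_0)$ in which $\mathcal G_0$ consists of segments spending a definite proportion of their time in a uniformly regular (uniformly hyperbolic) subset of $SM$ bounded away from $\text{Sing}$; $\mathcal G_0$ has specification at every scale $\eta$ with a constant gap $\tau(\eta)$ (stronger than the weak controlled specification we need); $\varphi$ has the Bowen property on $\mathcal G_0$ (immediate when $\varphi$ is H\"older, and the central estimate of BCFT when $\varphi=q\phi_u$); and the hypothesis $P(g^t,\text{Sing},\varphi)<P(g^t,\varphi)$ yields $P([\mathcal P_0]\cup[\mathcal S_0],\varphi)<P(\varphi)$. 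I would pull this back by setting $\mathcal D=FM\times\mathbb R^+$ and declaring $(v,t)\in\mathcal G$ (resp. $\mathcal P$, $\mathcal S$) iff $(p(v),t)\in\mathcal G_0$ (resp. $\mathcal P_0$, $\mathcal S_0$). Since $\tilde\varphi=\varphi\circ p$ is constant on fibres, $g$-distortion of $\tilde\varphi$ over $\mathcal G$ at scale $\eps$ (condition (2)) follows from the Bowen property of $\varphi$ over $\mathcal G_0$ with $g$ bounded; and since the fibres are compact and flow-isometric they contribute no entropy or pressure, so $P(\mathcal D^c\cup[\mathcal P]\cup[\mathcal S],\tilde\varphi,\eta,\eps)$ is dominated by a quantity tending to $P([\mathcal P_0]\cup[\mathcal S_0],\varphi)<P(\varphi)=P(\tilde\varphi)$ as $\eta\to0$, giving condition (4).

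\textbf{Specification upstairs --- the main obstacle.} It remains to establish condition (1): $\mathcal G$ has weak controlled specification at every scale $\eta$ in $FM$. Given $(v_i,t_i)\in\mathcal G$, project to $(u_i,t_i)\in\mathcal G_0$ and shadow these by a geodesic $w$ with gaps $\le\tau(\eta/3)$; a lift $\hat w$ of $w$ shadows the $u_i$ in the base, but at the start of the $i$-th block its frame need not be $\eta$-close to $v_i$, and since parallel transport forces the frame to be determined by $w$, the correction must be made by modifying the shadowing geodesic inside the gaps. This is where topological transitivity of $F^t$ together with the bunched curvature condition enter, via a Gauss--Bonnet argument in the spirit of Brin: the holonomy around a short geodesic bigon built inside the uniformly regular set equals, to leading order, the integral of the (bunched) curvature operator over a spanning disk, so such holonomies fill a neighbourhood of the identity in $SO(n-1)$; combined with transitivity --- which makes the holonomy pseudogroup act with dense orbits on each fibre --- one obtains that between nearby points of the regular set there are geodesic arcs of uniformly bounded length realizing any prescribed element of $SO(n-1)$ up to $\eta$. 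Inserting such a correcting arc in each gap (keeping it in the regular set, so the base shadowing and membership of $\mathcal G_0$ are preserved) adjusts the accumulated holonomy to match each $v_i$ within $\eta$, lengthening every gap by at most a constant $C(\eta)$ that is uniform by compactness of the space of correction data. Thus $\mathcal G$ satisfies weak controlled specification at scale $\eta$ with the constant gap function $h_\eta\equiv\tau(\eta/3)+C(\eta)$; since $h_\eta$ and $g$ are bounded, $\liminf_{t\to\infty}(h_\eta(t)+g(t))/\log t=0$, i.e.\ condition (3) holds. With entropy expansiveness and (1)--(4) verified, Theorem~\ref{thm:a} produces a unique equilibrium state for $(F^t,\tilde\varphi)$. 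The genuinely delicate point is this last step: converting transitivity plus bunched curvature into a uniform, bounded-length holonomy-surjectivity statement that is compatible with staying in the regular set, and it is precisely here that the Gauss--Bonnet computation does the work.
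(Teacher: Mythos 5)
Your outline of the peripheral steps is broadly in the spirit of the paper --- entropy expansiveness descends from the geodesic flow, the decomposition is the pullback of BCFT's, the Bowen property lifts because $\tilde\varphi=\varphi\circ\pi$, and the pressure gap transfers via a Bowen-type inequality (the paper's Lemma~\ref{liftpressure}). Two remarks there: since for $\varphi=q\phi_u$ the Bowen property holds only on $\GGG_0$ (not on all of $SM\times\RR^+$), you cannot invoke Theorem~\ref{thm:a} as stated, which requires $g$-distortion over all of $X\times\RR^+$; you need Theorem~\ref{thm:weprove} (and you cannot fall back on Theorem~\ref{thm:a'} either, since $P^\perp_{\exp}(\tilde\varphi,\eps)<P(\tilde\varphi)$ fails on $FM$ --- the isometric $SO(n-1)$-fibers make \emph{every} point non-expansive). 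This is exactly why the paper proves and applies Theorem~\ref{thm:weprove}, which decouples entropy expansiveness from expansiveness of individual orbits and demands distortion only over $\GGG$.

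The genuine gap is in the specification step, and your sketch of it rests on a misreading of what the Gauss--Bonnet computation delivers. Lemma~\ref{lemmaholonomy} gives an \emph{upper bound}: $d(P_{\partial S}v,v)\le C_B'\int_S|K|\,dA$. It says small spanning area implies small holonomy rotation; it does not say that holonomies of short geodesic bigons fill a neighborhood of the identity in $SO(n-1)$. That would be a surjectivity/openness statement of an entirely different character, and it is not true without additional structural assumptions --- indeed it is precisely the obstruction behind the dimension restrictions in Brin--Gromov and Burns--Pollicott, which the statement of Theorem~\ref{frameflow} does not impose. Moreover, your plan ``combined with transitivity --- which makes the holonomy pseudogroup act with dense orbits --- one obtains\ldots geodesic arcs of uniformly bounded length realizing any prescribed element of $SO(n-1)$ up to $\eta$'' is not an argument: topological transitivity of $F^t$ yields no effective/uniform density statement for the holonomy pseudogroup, and one would further have to ensure the correcting arcs stay in the regular set without spoiling $\GGG_0$-membership. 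The paper avoids all of this. It uses the Gauss--Bonnet bound only to establish \emph{backward Lipschitz control} of the frame flow along center-unstable directions (Lemma~\ref{lemmaunstableexpansion} and Proposition~\ref{propnocontraction}), which lets it build genuine stable/unstable ``horospherical translation'' leaves $\hat W^{s/u}_F$ over orbit segments in $\GGG$ and hence a local product structure for the \emph{frame} flow restricted near the regular set. Specification is then obtained directly from an effective density statement for $F^t$ on $FM$ (Lemma~\ref{uniformtran}): topological transitivity plus compactness gives a uniform $T$ such that any $F$-orbit enters any $\delta_2$-ball within time $T$, and the local product structure converts this into the shadowing required by Theorem~\ref{spe}, with a \emph{constant} gap function. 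So transitivity is used for the flow's orbits on $FM$, not for the holonomy group; and bunched curvature is used for a contraction estimate, not for holonomy surjectivity. As written, your specification step would not go through.
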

\end{enumerate}

\section{Preliminaries}
\subsection{Partition sums and topological pressure}
 Let $F=(f_t)_{t\in \mathbb{R}}$ be a continuous flow on a compact metric space $(X,d)$, and $\mathcal{M}_F(X)$ (resp. $\mathcal{M}_F^e(X))$ denote the set of all invariant (resp. ergodic) probability measures on $X$ under $(f_t)_{t\in \mathbb{R}}$. Define a new metric
 $$d_t(x,y)=d_t^F(x,y):=\max_{0\le s\le t}d(f_sx, f_sy), \quad \forall x,y\in X.$$
 Then given $\gamma>0$ and time $t>0$, the $t$-th Bowen ball centered at $x\in X$ is defined as
 $$B_t(x,\gamma):=\{y\in X: d_t(x,y)<\gamma\}.$$
 
 For the purpose of being explicit, we inherit the usage of two-scale pressure from \cite{CT16}. Recall that given a potential $\ph: X\to \mathbb{R}$, a scale $\gamma$ and time $t>0$, the representative information of $\ph$ over a $t$-th Bowen ball centered at $x$ is given by
$$
\Phi_{\gamma}(x,t):=\sup_{y\in B_t(x,\gamma)}\int_0^t \ph(f_sy)ds.
$$
For convenience, we also write $\Phi_t(x)=\Phi(x,t):=\Phi_0(x,t)$.

Given $\mathcal{C}\subset X\times \mathbb{R}^+$ and $t,\delta,\gamma>0$, the (separated) partition function of $\ph$ is given by
$$
\Lambda_t^{F,d}(\mathcal{C},\ph,\delta,\gamma)=\Lambda_t(\mathcal{C},\ph,\delta,\gamma):=\sup\{\sum_{x\in E}e^{\Phi_{\gamma}(x,t)}|E\subset \mathcal{C}_t \text{ is } (t,\delta)\text{-separated}\}.
$$
Respectively, the pressure of $\ph$ over $\mathcal{C}$ at scale $(\delta,\gamma)$ is given by
$$
P(F,\mathcal{C},\ph,\delta,\gamma)=P(\mathcal{C},\ph,\delta,\gamma)=\limsup_{t\to \infty}\frac{1}{t}\log \Lambda_t(\mathcal{C},\ph,\delta,\gamma).
$$
The above definition differs from the classic definition of topological pressure mainly in the presence of parameter $\gamma$. When no distortion is considered, we simply have $\Lambda_t(\mathcal{C},\ph,\delta):=\Lambda_t(\mathcal{C},\ph,\delta, 0)$. Write also $$P(F,\mathcal{C},\varphi,\delta)=P(\mathcal{C},\varphi,\delta):=P(\mathcal{C},\varphi,\delta,0), \text{\ and\ } P(\varphi,\delta):=P(X\times \mathbb{R}^+,\varphi,\delta).$$
Then $P=P(\varphi):=\lim_{\delta\to 0}P(\varphi,\delta)$.

The adaption to a homeomorphism $f:X\to X$ of the definition of partition function for degree $n\in \mathbb{N}$ is straightforward, as we simply replace the integral in $\Phi_{\gamma}$ by the partition sum $S_n\varphi(y):=\sum_{0\leq j \leq n-1}\varphi(f^j(y))$.
\begin{remark} \label{remarkE}
    Throughout the paper, for each $E\subset X$, we will abbreviate $E\times \mathbb{R}^+$ as $E$ when no confusion is raised.
\end{remark}

Let $\mathcal{C},t,\delta,\gamma$ be as above. 
As an analogy to the separated partition function, we give the definition of spanning partition function as follows
$$
\Lambda_{sp,t}^{F,d}(\mathcal{C},\varphi,\delta,\gamma)=\Lambda_{sp,t}(\mathcal{C},\varphi,\delta,\gamma):=\inf\{\sum_{x\in E}e^{\Phi_{\gamma}(x,t)}|E\subset \mathcal{C}_t \text{ is }(t,\delta) \text{-spanning}\}
$$
with a straightforward adaption to homeomorphisms. Also write 
$$\Lambda_{sp,t}(\mathcal{C},\varphi,\delta)=\Lambda_{sp,t}(\mathcal{C},\varphi,\delta,0).$$ 
The following observation follows from a standard argument, which can be found in \cite[page 169]{Wal82}.
\begin{proposition}
    For every $\mathcal{C}\subset X\times \mathbb{R}^+$ and $t,\delta>0$, we have
\begin{equation} \label{eqsepspancomparison}
    \Lambda_t(\mathcal{C},\varphi,2\delta)e^{-\text{Var}_t(\varphi,\delta)}\leq \Lambda_{sp,t}(\mathcal{C},\varphi,\delta) \leq \Lambda_{t}(\mathcal{C},\varphi,\delta),
\end{equation}
where $\text{Var}_t(\varphi,\delta):=\sup_{x\in X, d_t(x,y)<\delta}|\Phi(x,t)-\Phi(y,t)|$.
\end{proposition}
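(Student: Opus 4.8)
The plan is to run the classical separated/spanning comparison argument (as in \cite[page 169]{Wal82}), adapted to the weighted partition functions; since $\gamma=0$ here we have $\Phi_\gamma=\Phi$ throughout, so no Bowen-ball bookkeeping for the weights is needed beyond controlling $\text{Var}_t(\varphi,\delta)$. Both inequalities reduce to manipulating a single auxiliary injection between a separated set and a spanning set.

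For the right-hand inequality $\Lambda_{sp,t}(\mathcal{C},\varphi,\delta)\le\Lambda_t(\mathcal{C},\varphi,\delta)$, I would pick (via Zorn's lemma) a $(t,\delta)$-separated subset $E\subset\mathcal{C}_t$ that is maximal with respect to inclusion. Maximality forces $E$ to be $(t,\delta)$-spanning for $\mathcal{C}_t$: if some $y\in\mathcal{C}_t$ satisfied $d_t(y,x)\ge\delta$ for every $x\in E$, then $E\cup\{y\}$ would still be $(t,\delta)$-separated, contradicting maximality. Since this particular $E$ is admissible in the infimum defining $\Lambda_{sp,t}$, and admissible in the supremum defining $\Lambda_t$, we get $\Lambda_{sp,t}(\mathcal{C},\varphi,\delta)\le\sum_{x\in E}e^{\Phi(x,t)}\le\Lambda_t(\mathcal{C},\varphi,\delta)$.

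For the left-hand inequality, I would fix an arbitrary $(t,\delta)$-spanning set $E\subset\mathcal{C}_t$ and an arbitrary $(t,2\delta)$-separated set $D\subset\mathcal{C}_t$, and define $\pi\colon D\to E$ by choosing, for each $y\in D$, some $\pi(y)\in E$ with $d_t(y,\pi(y))<\delta$. The triangle inequality makes $\pi$ injective: $\pi(y)=\pi(y')$ implies $d_t(y,y')<2\delta$, hence $y=y'$. Using $\Phi(y,t)\le\Phi(\pi(y),t)+\text{Var}_t(\varphi,\delta)$ for each $y$, summing over $D$, and re-indexing by injectivity of $\pi$ (so each term on the right is counted once and $\pi(D)\subseteq E$), yields $\sum_{y\in D}e^{\Phi(y,t)}\le e^{\text{Var}_t(\varphi,\delta)}\sum_{x\in E}e^{\Phi(x,t)}$. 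Taking the supremum over all such $D$ and then the infimum over all such $E$ gives $\Lambda_t(\mathcal{C},\varphi,2\delta)\le e^{\text{Var}_t(\varphi,\delta)}\Lambda_{sp,t}(\mathcal{C},\varphi,\delta)$, which rearranges to the stated bound.

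There is no serious obstacle; the only points needing a little care are: (i) $\text{Var}_t(\varphi,\delta)<\infty$, which holds since $\varphi$ is continuous on the compact space $X$, so $\Phi(\cdot,t)$ is bounded; and (ii) that $\mathcal{C}_t$ need not be compact, so the separated and spanning sets in play may be infinite — which is precisely why Zorn's lemma is used to produce the maximal separated set, and why the injection argument should be phrased for arbitrary (possibly infinite) $D$ and $E$ rather than via a finiteness or covering-number argument.
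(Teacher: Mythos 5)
Your proof is correct and is exactly the standard Walters argument that the paper cites in lieu of giving its own proof: a maximal separated set is automatically spanning (giving the right inequality), and the injection from a $2\delta$-separated set into a $\delta$-spanning set, weighted by the variation, gives the left inequality. The extra care about Zorn's lemma and possibly infinite index sets is sound and appropriate given that $\mathcal{C}_t$ need not be closed.
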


\subsection{Variations and distortions}
Given any constant $\delta>0$, $T>0$ and $\mathcal{C}\subset X\times \mathbb{R}^+$, the ($T$-th) partial sum variation of $\ph$ at scale $\delta$ is given by
$$
\text{Var}_T(\mathcal{C},\delta):=\sup_{(x,T)\in \mathcal{C},d_T(x,y)<\delta}|\Phi(x,T)-\Phi(y,T)|.
$$
We also artificially let $\text{Var}_T(\mathcal{C},\delta):=-\infty$ when $\mathcal{C}_T=\emptyset$.
Let $g$ be a positive function defined on $\mathbb{R}^+$. We say $\ph$ is $g$-distorted over $\mathcal{C}$ and at scale $\delta$ if
$$
\text{Var}_T(\mathcal{C},\delta)<g(T).
$$

\subsection{Specification} The specification property we use throughout this paper is a weakened version of both weak specification in \cite{CT16} and controlled specification in \cite{Pav19}, namely \textit{weak controlled specification}.
\begin{definition}
Let $h$ be a non-decreasing function defined on $\mathbb{R}^+$, and $\delta>0$ be a constant. We say $\mathcal{C}\subset X\times \mathbb{R}^+$ satisfies \emph{weak controlled specification property} with gap function $h$ at scale $\delta$ if for every $n\geq 2$ and $\{(x_i,t_i)\}_{i=1}^n\subset \mathcal{C}$, there exist
\begin{enumerate}
    \item a sequence $\{\tau_i\}_{i=1}^{n-1}$ with $\tau_i\leq \max\{h(t_i),h(t_{i+1})\}$ for each $i$,
    \item a point $y\in X$, for which we denoted by $\text{Spec}^{n,\delta}_{\{\tau_i\}}(\{(x_i,t_i)\})$
\end{enumerate}
such that
$$
d_{t_i}(f_{\sum_{j=1}^{i-1}(t_j+\tau_j)}(y),x_i)<\delta.
$$
\end{definition}
We will neglect the dependence on $h$ by simply writing weak controlled specification at a specific scale when the gap function is prefixed.
\begin{remark}
    We will assume that $\min\{h(t),g(t)\}\geq 1$ for all $t\geq 0$ throughout this paper. Such an assumption causes no harm to the respective properties due to the nature of weak specification and distortion upper bound.
\end{remark}

\subsection{Decompositions}
As can be glanced in $\mathsection 1$, our main abstract results are stated in the language of \emph{orbit decomposition}, whose elegance is first discovered in \cite{CT12}, and then in a much greater generality in \cite{CT16}. We briefly recall the definition as follows.
\begin{definition} \label{def:decomp}
    A decomposition $(\mathcal{P},\mathcal{G},\mathcal{S})$ for $\mathcal{D}\subset X\times \mathbb{R}^+$ consists of three collections $\mathcal{P},\mathcal{G},\mathcal{S} \subset X\times \mathbb{R}^+$ and functions $p,g,s:\mathcal{D}\to \mathbb{R}^+$ such that for every $(x,t)\in \mathcal{D}$, writing $p(x,t), g(x,t), s(x,t)$ as $p,g,s$ respectively, we have $t=p+g+s$, and
    $$
(x,p)\in \mathcal{P}, \qquad (f_p(x),g)\in \mathcal{G}, \qquad (f_{p+g}(x),s)\in \mathcal{S} 
    $$
    Given a decomposition $(\mathcal{P},\mathcal{G},\mathcal{S})$ and constant $c\geq 0$, we denote by $\mathcal{G}^c$ the collection of $(x,t)\in \mathcal{D}$ satisfying $\max\{p(x,t),s(x,t)\}\leq c$. Meanwhile, $X\times \{0\}$ is always assumed to belong to all three collections $(\mathcal{P},\mathcal{G},\mathcal{S}$ by default.
\end{definition}

\begin{remark} \label{remark:G1G}
    Given a decomposition $(\mathcal{P},\mathcal{G},\mathcal{S})$ for $\mathcal{D}$, we will have to frequently deal with $\mathcal{G}^1$ due to a discretization process. Meanwhile, it follows immediately from the definition of $\mathcal{G}^1$ that both the distortion control and specification on $\mathcal{G}$ can be inherited to $\mathcal{G}^1$. Precisely, we have the following
    \begin{itemize}
        \item If $\varphi$ is $g$-distorted over $\mathcal{G}$ at scale $\delta$, then it is $(g+4||\varphi||)$-distorted over $\mathcal{G}^1$ at the same scale, where $||\varphi||:=\max\{|\varphi(x)|:x\in X\}$. 
        \item If $\mathcal{G}$ satisfies weak controlled specification property with gap function $h_{\mathcal{G}}$ at scale $\delta$, then it is so for $\mathcal{G}^1$ with the same gap function at scale $\delta'$, where $\delta':=\max\{d(f_s(x),f_s(y)):d(x,y)\leq \delta, s\in [0,1]\}$.  
    \end{itemize}
    In particular, $\mathcal{G}$ satisfying weak controlled specification at all scales is equivalent to which for $\mathcal{G}^1$.
\end{remark}

\subsection{Expansiveness}
For every $x\in X$ and $\eps>0$, consider the corresponding two-sided infinite Bowen ball given by
$$
\Gamma_{\eps}(x):=\{y\in X:d(f_t(x),f_t(y))\leq\eps \text{ for all }t\in \mathbb{R}\}.
$$
The size of $\{\Gamma_{\eps}(x)\}_{x\in X}$ is frequently used in characterizing non-expansiveness of $(X,F)$. Precisely, we consider two versions of weakened expansiveness in this paper. The first is known as ``entropy expansiveness'', which concerns the topological entropy of $\Gamma_{\eps}(x)$ and is defined as follows.
\begin{definition}
    Given $\eps>0$, we say $(X,F)$ is \emph{entropy expansive at scale $\eps$} if for every $x\in X$, we have $h(\Gamma_{\eps}(x))=0$. $(X,F)$ is \emph{entropy expansive} if it is so at some scale. 
\end{definition}

The second is defined using so-called obstruction to expansiveness, which is a measure-theoretic analog of expansiveness in asking for almost expansiveness for all measures with large entropy. Given $\eps>0$, let the set of \emph{non-expansive points at scale $\eps$} for $F$ be 
$$
\text{NE}(\eps):=\{x\in X:\Gamma_{\eps}(x)\nsubseteq f_{[-s,s]}(x)\text{ for any }s>0\}.
$$
The following quantity introduced in \cite[$\mathsection 2.5$]{CT16} captures the largest information of a non-expansive ergodic measure.
\begin{definition}
    Given a potential function $\ph\in C(X, \mathbb{R})$ and $\eps>0$, the \emph{pressure of obstructions to expansiveness at scale $\eps$} is defined as
    $$
P^{\perp}_{\exp}(\ph,\eps):=\sup_{\mu\in \mathcal{M}_F^e(X)}\Bigl \{h_{\mu}(f_1)+\int \ph d\mu:\mu(\text{NE}(\eps))=1  \Bigl \},
    $$
    and
    $$
P^{\perp}_{\exp}(\ph)=\lim_{\eps\to 0}P^{\perp}_{\exp}(\ph,\eps).
    $$
\end{definition}

Notice that if $\mu\in \mathcal{M}_F^e(X)$ satisfies $h_{\mu}(f_1)+\int \ph d\mu>P^{\perp}_{\exp}(\ph,\eps)$, then $\mu(\text{NE}(\eps))=0$. In other words, $\mu$ is \emph{almost expansive} at scale $\eps$. In particular, it follows from the continuity of the flow and compactness of $X$ that $\mu$ is \emph{almost entropy expansive}, meaning that $h(\Gamma_{\eps}(x))=0$ for $\mu$-a.e. $x$. It is also clear that when $(X,F)$ is entropy expansive, every $\mu\in \mathcal{M}_F(X)$ is almost entropy expansive. 

One of the essential features for systems with appropriate level of expansiveness is that both topological and measure-theoretic information are reflected at a given scale, on which we are able to run our estimates. Details are shown as follows.
\begin{proposition} \label{propexpansivegood}
    Let $(X,F)$ be a continuous flow over a compact metric space, $\ph\in  C(X, \mathbb{R})$, and $\eps>0$ a constant.
    \begin{enumerate}
        \item 
        If $P^{\perp}_{\exp}(\ph,\eps)<P(\ph)$, then
         \begin{equation} \label{eqexpansivescale}
        P(\ph,\eta)=P(\ph) \text{ for all } \eta\in (0,\eps/2). 
         \end{equation}
        If $(X,F)$ is entropy expansive at scale $\eps$, and $\ph$ is $g$-distorted over $X\times \mathbb{R}^+$ at scale $\eps$ with $g$ satisfying $\lim_{T\to \infty}\frac{g(T)}{T}=0$, then \eqref{eqexpansivescale} also holds true.
        \item For every $t>0$, let $\mathcal{A}_t$ be any finite measurable partition of $X$ with $\text{Diam}_t(\mathcal{A}_t)<\eps$, where $\text{Diam}_t$ refers to the maximal diameter of elements in $\mathcal{A}_t$ regarding the metric $d_t$. Then for every $\mu\in \mathcal{M}_F^e(X)$ being almost entropy expansive at scale $\eps$, we have
         $$h_{\mu}(f_t,\mathcal{A}_t)=h_{\mu}(f_t).$$          
         As discussed above, this covers all elements in $\mathcal{M}_F^e(X)$ if $(X,F)$ is entropy expansive; and covers every ergodic $\mu\in \mathcal{M}_F^e(X)$ satisfying $h_{\mu}(f_1)+\int \ph d\mu>P^{\perp}_{\exp}(\ph,\eps)$.
    \end{enumerate}
\end{proposition}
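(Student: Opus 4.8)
The plan is to establish Part (2) first — that a partition of $d_t$-diameter below $\eps$ computes the Kolmogorov--Sinai entropy $h_\mu(f_t)$ for every almost entropy expansive ergodic $\mu$ — and then to deduce Part (1) by feeding Part (2) into the variational principle together with a Katok-type construction of separated sets. So fix $t>0$, a finite partition $\AAA_t$ with $\Diam_t(\AAA_t)<\eps$, and an ergodic $\mu$ with $h(\Gamma_\eps(x))=0$ for $\mu$-a.e.\ $x$. Since $h_\mu(f_t,\AAA_t)\le h_\mu(f_t)$ is automatic and $h_\mu(f_t)=\sup_\BBB h_\mu(f_t,\BBB)$, it suffices to show $h_\mu(f_t,\BBB\vee\AAA_t)=h_\mu(f_t,\AAA_t)$ for every finite $\BBB$, i.e.\ that $\tfrac1N H_\mu\bigl(\bigvee_{j=0}^{N-1}f_{-jt}\BBB\mid\bigvee_{j=0}^{N-1}f_{-jt}\AAA_t\bigr)\to 0$. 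The geometric point is that two points in a common atom of $\bigvee_{j=0}^{N-1}f_{-jt}\AAA_t$ satisfy $d_t(f_{jt}x,f_{jt}y)<\eps$ for $0\le j\le N-1$, hence $d(f_ux,f_uy)<\eps$ for all $u\in[0,Nt]$ because $[0,t]+t\{0,\dots,N-1\}=[0,Nt]$; thus each such atom lies in a Bowen ball $B_{Nt}(x,\eps)$, and after recentering this ball around $f_{-Nt/2}(x)$ and letting $N\to\infty$ one sees its dynamical complexity is governed by $\Gamma_\eps(x)$.

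To finish Part (2) I would bound the conditional entropy above by $\int\log\#\{\text{atoms of }\bigvee_{j=0}^{N-1}f_{-jt}\BBB\text{ meeting the }\bigvee_{j=0}^{N-1}f_{-jt}\AAA_t\text{-atom of }x\}\,d\mu(x)$, hence by $\int\log\#\{\text{such atoms meeting }B_{Nt}(x,\eps)\}\,d\mu(x)$; a standard covering argument bounds the integrand by the logarithm of an $(Nt,\theta)$-spanning number of $B_{Nt}(x,\eps)$ for a $\theta$ depending only on $\BBB$. By a Bowen-type estimate (after the recentering above) this spanning number grows subexponentially for $\mu$-a.e.\ $x$, since it is controlled by $h(\Gamma_\eps(x))=0$; on all of $X$ the integrand is $\le\log r_{Nt}(X,\theta)=O(N)$, so Egorov's theorem plus dominated convergence force the limit to be $0$. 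This yields $h_\mu(f_t,\BBB)\le h_\mu(f_t,\AAA_t)$ for every $\BBB$, i.e.\ $h_\mu(f_t,\AAA_t)=h_\mu(f_t)$, and in particular the two stated cases: all ergodic $\mu$ when $(X,F)$ is entropy expansive at $\eps$, and all ergodic $\mu$ with $h_\mu(f_1)+\int\ph\,d\mu>P^{\perp}_{\exp}(\ph,\eps)$ in general (such $\mu$ being almost expansive, hence almost entropy expansive, at scale $\eps$).

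For Part (1), the inequality $P(\ph,\eta)\le P(\ph)$ is immediate, so fix $\eta\in(0,\eps/2)$ and prove the reverse. By the variational principle $P(\ph)=\sup\{h_\mu(f_1)+\int\ph\,d\mu:\mu\in\mathcal{M}_F^e(X)\}$; under $P^{\perp}_{\exp}(\ph,\eps)<P(\ph)$ this supremum is approached along ergodic $\mu$ with $h_\mu(f_1)+\int\ph\,d\mu>P^{\perp}_{\exp}(\ph,\eps)$, which are almost entropy expansive at $\eps$, while under entropy expansiveness at $\eps$ every ergodic $\mu$ qualifies. For such a $\mu$, pick a partition $\AAA$ with $\Diam_1(\AAA)<\eps-\eta$; Part (2) gives $h_\mu(f_1,\AAA)=h_\mu(f_1)$, so the Shannon--McMillan--Breiman theorem and Birkhoff's theorem produce, for large $N$, a set $K_N$ with $\mu(K_N)\ge1-\delta$ on which $\mu(\bigvee_{j=0}^{N-1}f_{-j}\AAA\,(x))\le e^{-N(h_\mu(f_1)-\eps')}$ and $\tfrac1N\Phi(x,N)\ge\int\ph\,d\mu-\eps'$. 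A maximal $(N,\eta)$-separated $E\subset K_N$ is $(N,\eta)$-spanning for $K_N$; since the atoms of $\bigvee_{j=0}^{N-1}f_{-j}\AAA$ have $d_N$-diameter $<\eps-\eta$, each such atom meeting a ball $B_N(z,\eta)$ with $z\in E$ lies in $B_N(z,\eps)$, and there are only $e^{o(N)}$ of them by the Bowen estimate (uniformly in $z$ under entropy expansiveness; off a small-measure set via Egorov in the general case). Writing $\mu(K_N)\le\sum_{z\in E}\mu(B_N(z,\eta))$ and splitting each summand into its good-atom part ($\le\#E\cdot e^{o(N)}e^{-N(h_\mu(f_1)-\eps')}$, using bounded overlap of the balls) and its bad-atom part ($\le C\delta$), one gets $\#E\ge\tfrac12 e^{N(h_\mu(f_1)-\eps')-o(N)}$ for $\delta$ small, while each $z\in E\subset K_N$ contributes $e^{\Phi(z,N)}\ge e^{N(\int\ph\,d\mu-\eps')}$; hence $P(\ph,\eta)\ge h_\mu(f_1)+\int\ph\,d\mu-2\eps'$, and letting $\eps'\to 0$ and taking the supremum over admissible $\mu$ gives $P(\ph,\eta)\ge P(\ph)$. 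In the entropy-expansive scenario one may instead argue purely topologically: $g$-distortion at scale $\eps$ with $g(T)/T\to 0$ gives $P(\ph,\delta,\eps)=P(\ph,\delta,0)$ for all $\delta$, entropy expansiveness makes $P(\ph,\delta,\eps)$ independent of the separation scale $\delta\in(0,\eps/2)$, and chaining these with $P(\ph)=\lim_{\delta\to0}P(\ph,\delta,0)$ yields \eqref{eqexpansivescale} — this is the route in which the distortion hypothesis is actually used.

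The step I expect to be the main obstacle is the Bowen-type control of the $(Nt,\theta)$-spanning complexity of a finite-time Bowen ball $B_{Nt}(x,\eps)$ — equivalently, of the number of small-diameter refined-partition atoms it can contain — and the assertion that this is subexponential because it is governed by $h(\Gamma_\eps(x))=0$. Making this rigorous requires recentering the forward Bowen balls so that they converge to the two-sided ball $\Gamma_\eps(x)$, careful bookkeeping of the $\eps$-versus-$\eps/2$ loss in the scales, and an Egorov argument to upgrade an a.e.\ subexponential bound to one uniform off a set of arbitrarily small measure so that it survives integration against $\mu$; the remaining manipulations with the variational principle, the Shannon--McMillan--Breiman and Birkhoff theorems, and the separated/spanning comparison \eqref{eqsepspancomparison} are routine.
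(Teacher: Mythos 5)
The paper only proves the second clause of Part~(1) — the entropy-expansive case with $g$-distortion — and it does so via exactly the ``purely topological'' route you sketch at the end of your Part~(1) discussion: the Bowen spanning bound for finite-time Bowen balls under entropy expansiveness (\cite{Bow72}), the separated/spanning comparison~\eqref{eqsepspancomparison} with the $\Var_t$ term absorbed by $g(t)$, and a block decomposition to pass from the time-$t$ map back to the flow. The remaining assertions (the pressure-gap clause of Part~(1) and all of Part~(2)) are simply cited as restatements of \cite[Theorem~3.2 and Proposition~3.7]{CT16}, so your main argument is a self-contained re-derivation of those results rather than a reproduction of the paper's proof.

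About that re-derivation: the strategy — prove the generator property (Part~(2)) for almost entropy-expansive measures and then feed it into a Katok-style counting argument for Part~(1) — is the right framework, and is essentially how CT16 proceed. But the three places you flag as ``the main obstacle'' are not mere bookkeeping; they are where the real content lives, and as written there are genuine gaps. (a) Bounding the number of $\bigvee_{j<N}f_{-jt}\BBB$-atoms meeting a $\bigvee_{j<N}f_{-jt}\AAA_t$-atom by an $(Nt,\theta)$-spanning number of $B_{Nt}(x,\eps)$ is not automatic: a single $(Nt,\theta)$-ball can meet several $\BBB$-cells at each of $N$ time steps, giving an a priori multiplicity of order $k^N$; one must either work with partitions subordinate to a cover, or carry out Bowen's Lemma~3.4/Theorem~3.5 bookkeeping, to avoid this exponential loss. (b) Spanning numbers of the \emph{forward} balls $B_{Nt}(x,\eps)$ are controlled by the forward infinite ball, while the paper's entropy expansiveness is stated in terms of the \emph{two-sided} ball $\Gamma_\eps(x)$; the ``recentering'' you mention is the correct idea (and is implicitly what the paper leans on when it invokes \cite[Proposition~2.2]{Bow72} via the identification of bi-infinite Bowen balls for $f_t$ with those for the flow), but it needs to be turned into a concrete estimate, and the passage uses invertibility and flow continuity in a way your sketch does not yet record. (c) In the Katok step the Bowen-ball complexity bound must be applied at the separated points $z\in E$, which need not be $\mu$-typical; Egorov gives a good set $K'$ of large measure, but you then have to reorganize the count so the representative of each surviving atom is chosen in $K'$, rather than applying the a.e.\ bound at arbitrary $z$. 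Finally, note that if your Katok argument closed, it would deliver the entropy-expansive clause of Part~(1) with no use of the $g$-distortion hypothesis, whereas the paper's own proof genuinely uses $g(T)/T\to0$ to kill the $\Var_t$ factor in \eqref{eqsepspancomparison}; that discrepancy is a useful sanity check that something nontrivial is still missing in the counting.
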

\begin{proof}
    We only prove \eqref{eqexpansivescale} in the case of entropy expansive $(X,F)$ as the others are just restatements of \cite[Theorem 3.2 $\&$ Proposition 3.7]{CT16}. We first consider $(X,f_t,d_t)$ for any fixed $t>0$. For every $E\subset X$ and $\delta>0$, denote by $r_n^{f_t,d_t}(E,\delta)$ the cardinality of a minimal $(\delta,n)$-spanning set for $E$ under $f_t$ within metric $d_t$. Fix $\eta\in (0,\eps/2)$, and write $\eta':=\eta/2$. Since a bi-infinite Bowen ball for $f_t$ within metric $d_t$ coincides with a bi-infinite Bowen ball for flow, it follows from \cite[Proposition 2.2] {Bow72} that for every $\beta>0$, there exists a constant $C_{\beta}=C_{\beta}(\eta'/2)>0$ such that
    $$
r_n^{f_t,d_t}(B_n^{d_t}(x,\eps),\eta'/2)\leq C_{\beta}e^{n(h(f_t)+\beta)}.
    $$
    Following the proof of \cite[Theorem 2.4] {Bow72}, we have
    $$
\Lambda_{sp,n}^{f_t,d_t}(X,\ph,\eta'/2)\leq C_{\beta}e^{g(tn)+n\beta}\Lambda_{sp,n}^{f_t,d_t}(X,\ph,\eps), 
    $$
    which together with \eqref{eqsepspancomparison} implies that
    $$
    e^{-g(tn)}\Lambda_n^{f_t,d_t}(X,\Phi_t,\eta') 
    \leq \Lambda_{sp,n}^{f_t,d_t}(X,\Phi_t,\eta'/2) \leq C_{\beta}e^{g(tn)+n\beta}\Lambda_n^{f_t,d_t}(X,\Phi_t,\eps).
    $$
    Therefore, we have
    \begin{equation} \label{eqestimateeta'}
       \Lambda_n^{f_t,d_t}(X,\Phi_t,\eta')
       \leq C_{\beta}e^{2g(tn)+n\beta}\Lambda_n^{f_t,d_t}(X,\Phi_t,\eps)=C_{\beta}e^{2g(tn)+n\beta}\Lambda_{nt}(X,\varphi,\eps).
    \end{equation}
    
    For every $m\in \mathbb{N}$, let $E_m$ be a maximally $(\eta',m)$-separated set for $\ph$ under $f_t$ within metric $d_t$. For every $T>0$, write $T=nt+t'$ with $n\in \mathbb{N}$ and $t'\in (0,t]$. Let $E_T$ (resp. $E_{t'}$) be a maximally $(\eta,T)$-separated set (resp. $(\eta',t')$-separated set) for $\ph$ under the original flow $F$. Define a map $\pi:E_{T}\to E_{n}\times E_{t'}$ as follows: for $x\in E_T$, $\pi(x)=(x_1,x_2)$ is such that $d_{nt}(x,x_1)<\eta'$ and $d_{t'}(f_{nt}(x),x_2)<\eta'$. Such a map $\pi$ is injective by the definition, which implies that
    $$
\Lambda_T(X,\varphi,\eta)\leq e^{g(tn)+g(t')}\Lambda_n^{f_t,d_t}(X,\Phi_t,\eta')\Lambda_{t'}(X,\varphi,\eta').
    $$
    By noticing that $\Lambda_{t'}(X,\varphi,\eta')$ is bounded from above by a constant $M(t,\eta')$ and invoking \eqref{eqestimateeta'}, we have
    $$
\Lambda_T(X,\varphi,\eta)\leq C_{\beta}M(t,\eta')e^{3g(tn)+g(t')+n\beta}\Lambda_{nt}(X,\varphi,\eps).
    $$
    Since $\lim_{T\to \infty}\frac{g(T)
    }{T}=0$ and $\sup_{t'\in [0,t]}g(t')$ is bounded, by taking logarithm over above both sides of the inequality, dividing by $nt$ and making $n$ go to infinity, we have
    $$
P(\varphi,\eta)\leq P(\varphi,\eps)+\frac{\beta}{t}.
    $$
    Since the choice of $\beta$ is arbitrary, we conclude the proof of the proposition by making $\beta$ go to $0$.
    \end{proof}

    When $P(\ph,\eta)=P(\ph)$, it follows from the proof of \cite[Lemma 4.14]{CT16} that $(X,F,\ph)$ has an equilibrium state. Therefore, Proposition \ref{propexpansivegood}(1) provides us with the existence of equilibrium state in our setting.
    \begin{proposition} \label{propexistence}
        In both of the following two cases, $(X,F,\ph)$ has an equilibrium state.
        \begin{enumerate}
            \item $(X,F)$ is entropy expansive at scale $\eps>0$, at which $\ph$ is $g$-distorted over $X\times \mathbb{R}^+$ with $g$ satisfying $\lim_{T\to \infty}\frac{g(T)}{T}=0$.
            \item $P^{\perp}_{\exp}(\ph)<P(\ph)$.
        \end{enumerate}
    \end{proposition}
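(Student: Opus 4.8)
The plan is to reduce both statements to a single fact --- that $P(\ph,\eta)=P(\ph)$ for all sufficiently small $\eta>0$ --- and then to invoke the classical construction of an equilibrium state from near-optimal separated sets. That the hypotheses force this scale-independence of the pressure is precisely the content of Proposition~\ref{propexpansivegood}(1). Under hypothesis~(1) of Proposition~\ref{propexistence} it is the second assertion of Proposition~\ref{propexpansivegood}(1) (entropy expansiveness at scale $\eps$ together with the $g$-distortion and $g(T)/T\to 0$); under hypothesis~(2), since $P^{\perp}_{\exp}(\ph)=\lim_{\eps\to 0}P^{\perp}_{\exp}(\ph,\eps)$, the strict inequality $P^{\perp}_{\exp}(\ph)<P(\ph)$ supplies some $\eps>0$ with $P^{\perp}_{\exp}(\ph,\eps)<P(\ph)$, and then the first assertion applies. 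Either way we obtain $\eps>0$ for which \eqref{eqexpansivescale} holds, so it remains to establish the general principle: \emph{if $P(\ph,\eta)=P(\ph)$ for some $\eta>0$, then $(X,F,\ph)$ has an equilibrium state.}

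For this I would fix such an $\eta$, pick times $t_n\to\infty$ together with $(t_n,\eta)$-separated sets $E_n\subset X$ satisfying $\sum_{x\in E_n}e^{\Phi(x,t_n)}\ge\tfrac{1}{2}\Lambda_{t_n}(X\times\mathbb{R}^+,\ph,\eta)$, and pass to a subsequence along which $\tfrac{1}{t_n}\log\sum_{x\in E_n}e^{\Phi(x,t_n)}\to P(\ph)$. Form the weighted empirical measures $\nu_n:=\bigl(\sum_{x\in E_n}e^{\Phi(x,t_n)}\bigr)^{-1}\sum_{x\in E_n}e^{\Phi(x,t_n)}\delta_x$ and their orbital averages $\mu_n:=\tfrac{1}{t_n}\int_0^{t_n}(f_s)_*\nu_n\,ds$, and pass to a further subsequence so that $\mu_n\to\mu$ weak$^*$ with $\mu\in\mathcal{M}_F(X)$. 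Now fix a finite measurable partition $\mathcal{A}$ with $\mu(\partial\mathcal{A})=0$ and with $\Diam(\mathcal{A})$ small enough --- relative to $\eta$ and a modulus of continuity of $\{f_s:s\in[0,1]\}$ --- that each atom of $\bigvee_{j=0}^{\lceil t_n\rceil-1}f_{-j}\mathcal{A}$ contains at most one point of $E_n$. Then one has the identity $\log\sum_{x\in E_n}e^{\Phi(x,t_n)}=H_{\nu_n}\bigl(\bigvee_{j=0}^{\lceil t_n\rceil-1}f_{-j}\mathcal{A}\bigr)+t_n\int\ph\,d\mu_n$, and the Misiurewicz-type concavity and almost-superadditivity estimates push this to the limit, yielding $h_\mu(f_1,\mathcal{A})+\int\ph\,d\mu\ge P(\ph)$; since $h_\mu(f_1,\mathcal{A})\le h_\mu(f_1)$ and $h_\mu(f_1)+\int\ph\,d\mu\le P(\ph)$ by the variational principle, $\mu$ is an equilibrium state.

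The genuinely delicate part is this final limiting argument --- interchanging the flow-time quantity $\Phi(\cdot,t_n)$ with integer-time Birkhoff data, choosing $\mathcal{A}$ with null boundary so that the normalized partition entropies of $\mu_n$ are upper semicontinuous in the limit, and controlling the failure of invariance of $\mu_n$. None of this uses the specification or decomposition structure; it is exactly the argument in the proof of \cite[Lemma~4.14]{CT16}, which applies verbatim once \eqref{eqexpansivescale} is available. Accordingly, the shortest honest writeup simply cites that lemma after appealing to Proposition~\ref{propexpansivegood}(1) in each of the two cases.
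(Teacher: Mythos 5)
Your argument is correct and takes exactly the paper's route: both cases are reduced to the scale-stability $P(\ph,\eta)=P(\ph)$ via Proposition~\ref{propexpansivegood}(1) (with case (2) first downgraded to $P^{\perp}_{\exp}(\ph,\eps)<P(\ph)$ for some $\eps$), and the existence of an equilibrium state is then delegated to the proof of \cite[Lemma~4.14]{CT16}. The only difference is that you sketch the content of the cited lemma (weighted empirical measures from near-optimal separated sets, a small-diameter partition with $\mu$-null boundary, upper semicontinuity of the normalized partition entropy), whereas the paper simply gives the citation.
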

    It is now clear from Propositions \ref{propexpansivegood} and \ref{propexistence} that both Theorem \ref{thm:a} and Theorem \ref{thm:a'} follow from Theorem \ref{thm:weprove}, whose proof will be given in $\mathsection 3$.

\subsection{Maximal ergodic theorem and maximal inequality}
Before stepping into the proof of Theorem \ref{thm:weprove}, we give the following version of maximal ergodic theorem in the flow case, which is essential in deducing uniqueness of equilibrium states in our setting.
\begin{lemma}
    Let $\mu\in \mathcal{M}_F(X)$. For $g\in L^1(\mu)$ and $x\in X$, write 
    $$M^+g(x):=\sup_{t>0}\left\{\frac{1}{t}\int_0^t g(f_s(x))ds\right\}.$$ Then for every $\lambda\in \mathbb{R}$ we have
    $$
\int_{M^+g>\lambda}(g-\lambda) d\mu \geq 0.
    $$
    Consequently, when $\lambda>0$, we have
    \begin{equation} \label{eqM+g}
        \mu(M^+g\leq \lambda)\geq 1-\frac{||g||_{L^1(\mu)}}{\lambda}.
    \end{equation}
    Similarly, the above inequality still holds true for 
    $$M^-g(x):=\sup_{t>0}\left\{\frac{1}{t}\int_{-t}^0 g(f_{s}(x))ds\right\}$$ if we consider flowing backwards instead, which is
        \begin{equation} \label{eqM-g}
        \mu(M^-g\leq \lambda)\geq 1-\frac{||g||_{L^1(\mu)}}{\lambda}.
    \end{equation}
\end{lemma}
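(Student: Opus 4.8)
The plan is to deduce the flow version of the maximal ergodic theorem from the classical discrete-time Wiener maximal ergodic theorem (or Hopf's maximal ergodic lemma) applied to the time-one map, combined with a standard comparison between the continuous-time averages appearing in $M^+g$ and their discrete counterparts. First I would reduce the claim to the case where $g$ is bounded: given $g\in L^1(\mu)$, approximate it by $g_N=\max\{\min\{g,N\},-N\}$; since $\int_{M^+g>\lambda}(g-\lambda)\,d\mu$ is what we want to bound below, and truncation affects both the averaging operator and the integrand in a controllable, monotone way, a limiting argument via dominated/monotone convergence will recover the general $L^1$ statement from the bounded one. For bounded $g$, the function $G(x):=\int_0^1 g(f_s x)\,ds$ lies in $L^\infty(\mu)\subset L^1(\mu)$ and satisfies $\frac1n S_n G(x)=\frac1n\int_0^n g(f_s x)\,ds$, where $S_n$ denotes the Birkhoff sum for the time-one map $f_1$.

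The key step is then the following elementary comparison: for bounded $g$, the continuous supremum $M^+g(x)=\sup_{t>0}\frac1t\int_0^t g(f_s x)\,ds$ differs from the ``integer-anchored'' supremum $\widetilde M g(x):=\sup_{n\ge 1}\frac1n\int_0^n g(f_s x)\,ds$ by an explicit error that shrinks as the window grows; writing an arbitrary $t$ as $t=n+r$ with $r\in[0,1)$, one bounds $\bigl|\frac1t\int_0^t-\frac1n\int_0^n\bigr|$ by something of order $\|g\|_\infty/n$. This is not quite enough on its own because of small $t$; the clean way around it is to note that $M^+g(x)>\lambda$ iff there exists $t>0$ with $\int_0^t(g(f_s x)-\lambda)\,ds>0$, i.e. the continuous analogue of the set $\{x:\exists n, S_n(g-\lambda)(x)>0\}$, and then apply the classical Hopf maximal ergodic lemma directly to the $f_1$-invariant measure $\mu$ and the observable $G_\lambda(x):=\int_0^1(g(f_s x)-\lambda)\,ds$: its conclusion $\int_{\{\sup_n S_n G_\lambda>0\}}G_\lambda\,d\mu\ge 0$ unwinds, using $F$-invariance of $\mu$ (so $\int g(f_s\cdot)\,d\mu$ is independent of $s$ and $\int G_\lambda\,d\mu=\int(g-\lambda)\,d\mu$), to precisely $\int_{M^+g>\lambda}(g-\lambda)\,d\mu\ge 0$, once one checks that $\{\sup_n S_n G_\lambda>0\}$ agrees $\mu$-a.e.\ with $\{M^+g>\lambda\}$. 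That last set-equality is where the comparison estimate above is used: ``$\supseteq$'' needs turning a continuous $t$ witnessing positivity into an integer one, which the $O(\|g\|_\infty/n)$ bound handles for large witnesses, and small witnesses are dealt with by iterating the flow (if $\int_0^t>0$ for some small $t$, then along the orbit one eventually accumulates a positive integer-window average, by invariance/recurrence, or more simply by just replacing $t$ with a large multiple using the cocycle identity for the non-negative part).

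From the inequality $\int_{M^+g>\lambda}(g-\lambda)\,d\mu\ge 0$ the consequence \eqref{eqM+g} is routine: for $\lambda>0$, split $\int_{M^+g>\lambda}g\,d\mu \ge \lambda\,\mu(M^+g>\lambda)$, hence $\lambda\,\mu(M^+g>\lambda)\le \int_{M^+g>\lambda}g\,d\mu\le \int|g|\,d\mu=\|g\|_{L^1(\mu)}$, giving $\mu(M^+g>\lambda)\le \|g\|_{L^1(\mu)}/\lambda$ and therefore $\mu(M^+g\le\lambda)\ge 1-\|g\|_{L^1(\mu)}/\lambda$. The backward statements \eqref{eqM-g} follow verbatim by replacing $F=\{f_t\}$ with the reversed flow $\{f_{-t}\}$, which preserves $\mu$ and under which $M^-g$ is exactly the forward maximal function; no new argument is needed.

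The main obstacle I expect is the set-equality $\{M^+g>\lambda\}=\{\sup_n S_nG_\lambda>0\}$ up to $\mu$-null sets — specifically the inclusion that converts a real-time witness $t$ of a strictly positive average into an integer-time witness. One must be a little careful that the error term in the window comparison does not swallow a positive but tiny average; the honest fix is to work with the strict inequality via the cocycle/superadditivity of $t\mapsto \bigl(\int_0^t(g(f_s x)-\lambda)\,ds\bigr)_+$ together with $F$-invariance of $\mu$, or alternatively to prove the maximal inequality first for the time-one map applied to $g$ itself when $g$ is additionally $F$-invariant-smoothed, and then pass to general $g$ by the $G=\int_0^1 g(f_s\cdot)\,ds$ device; either route is standard but deserves the one careful paragraph it will get in the full proof.
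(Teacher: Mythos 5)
The paper cites this as a standard fact and gives no proof, so the only question is whether your argument is correct; it is not, and the defect sits exactly where you flagged trouble but underestimated it. The set identity $\{\sup_n S_n G_\lambda>0\}=\{M^+g>\lambda\}$ modulo $\mu$ is false, and none of ``iterating the flow'', ``recurrence'', or a ``cocycle identity for the non-negative part'' repairs it. Take $f_t(x)=x+t$ on $\RR/\ZZ$ with Lebesgue measure, $g(x)=\cos(2\pi x)$, $\lambda=0$: then $\int_0^n g(f_sx)\,ds=0$ for every integer $n$ and every $x$, so $G_\lambda\equiv 0$ and $\{\sup_n S_nG_\lambda>0\}=\emptyset$, while $\int_0^t g(f_sx)\,ds=\tfrac{1}{2\pi}\bigl(\sin 2\pi(x+t)-\sin 2\pi x\bigr)$ takes strictly positive values for a.e.\ $x$, so $\{M^+g>0\}$ has full measure. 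No amount of iterating manufactures a positive integer-window average when there are none; this is not a small-$t$ nuisance absorbable by an $O(\|g\|_\infty/n)$ error but the borderline $\lambda$ where the inequality is sharp. There is also a second, independent gap: Hopf hands you $\int_E G_\lambda\,d\mu\ge 0$ with $E$ a set that is in general neither $F$- nor $f_1$-invariant, so the invariance fact you invoke, $\int_X G_\lambda\,d\mu=\int_X(g-\lambda)\,d\mu$, does not let you replace the integrand by $g-\lambda$ on $E$; indeed $\int_E G_\lambda\,d\mu=\int_0^1\int_{f_uE}(g-\lambda)\,d\mu\,du$, which involves the translates $f_uE$ rather than $E$ itself.

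The correct argument stays in continuous time. For each $x$ and $N$, apply Riesz's rising sun lemma to the continuous function $u\mapsto\int_0^u(g(f_sx)-\lambda)\,ds$ on $[0,N]$: the shadow set $E_x=\{u:\exists\,v\in(u,N],\ \int_u^v(g(f_sx)-\lambda)\,ds>0\}$ is a disjoint union of intervals $(a_k,b_k)$ with $\int_{a_k}^{b_k}(g(f_sx)-\lambda)\,ds\ge 0$, hence $\int_{E_x}(g(f_ux)-\lambda)\,du\ge 0$, and unravelling the definition shows $E_x=\{u: M^+_{N-u}g(f_ux)>\lambda\}$ where $M^+_T$ is the maximal function truncated to $t\in(0,T]$. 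Integrating in $x$, applying Fubini (licit since $\int_X\int_0^N|g(f_ux)-\lambda|\,du\,d\mu=N\|g-\lambda\|_1<\infty$), and changing variables $y=f_ux$ by $F$-invariance yields $0\le\int_0^N\int_{\{M^+_{T}g>\lambda\}}(g-\lambda)\,d\mu\,dT$. Dividing by $N$ and letting $N\to\infty$, using that $\int_{\{M^+_{T}g>\lambda\}}(g-\lambda)\,d\mu\to\int_{\{M^+g>\lambda\}}(g-\lambda)\,d\mu$ by dominated convergence, gives $\int_{\{M^+g>\lambda\}}(g-\lambda)\,d\mu\ge 0$. Your Chebyshev deduction of \eqref{eqM+g} and the time-reversal giving \eqref{eqM-g} are both fine once the main inequality is in hand.
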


\section{Proof of Theorem \ref{thm:weprove}}
\subsection{Upper bound of partition sums}
This section is devoted to the proof of Theorem \ref{thm:weprove}. Throughout the entire section, $\eps$ will be the fixed scale at which obstructions to expansiveness and regularity are well-controlled, as mentioned in the statement of Theorem \ref{thm:a}. The other constant $\eta$ representing (a multiple of) specification scale will always be assumed to be small compared to $\eps$,  i.e. $\eta\in (0,\eps/40)$. As briefly mentioned in Remark \ref{remark:G1G}, our proof relies heavily on a discretization process. Consequently, throughout this section, we majorly deal with $\mathcal{G}^1$, over which specification property and distortion control are handled. Based on Remark \ref{remark:G1G}, by expanding the size of $g$ by a fixed constant if necessary, we may assume that $\mathcal{G}^1$ is $g$-distorted at scale $\eps$. We also denote by $h_{\eta}^{\mathcal{G}^1}$ the weak specification gap function for $\mathcal{G}^1$ at scale $\eta$, and write $h_{\eta}^{\mathcal{G}^1}$ as $h$ when $\eta$ is clear.
\begin{lemma} \label{lemmaGupperbound}
    For every small $\eta>0$, there exists a constant $C_1=C_1(\mathcal{G}^1,5\eta)>2$ such that for any $n\in \mathbb{N}$ and $t>0$, we have
\begin{equation} \label{equpperboundmultipleG}
    (\Lambda(\mathcal{G}^1,5\eta,t))^n
\leq (e^{C_1h(t)+g(t)})^n\Lambda(X,\eta,nt+(n-1)h(t)).
\end{equation}
\end{lemma}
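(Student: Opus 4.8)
The plan is to read \eqref{equpperboundmultipleG} as a gluing estimate: one concatenates $n$ orbit segments taken from a $(t,5\eta)$-separated subset of $\mathcal{G}^1_t$ into a single long orbit segment via weak controlled specification, and compares partition sums. Since $\Lambda(\mathcal{G}^1,5\eta,t)^n=\sup_E\bigl(\sum_{x\in E}e^{\Phi(x,t)}\bigr)^n$, the supremum being over $(t,5\eta)$-separated $E\subseteq\mathcal{G}^1_t$, it suffices to fix such an $E$ and prove $\sum_{\bx\in E^n}\prod_{i=1}^ne^{\Phi(x_i,t)}\le(e^{C_1h(t)+g(t)})^n\,\Lambda(X,\eta,T)$, where $T:=nt+(n-1)h(t)$.

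\textbf{The shadowing point and the weight of its orbit.} For each $\bx=(x_1,\dots,x_n)\in E^n$ I would apply weak controlled specification for $\mathcal{G}^1$ at scale $\eta$ to $\{(x_i,t)\}_{i=1}^n\subseteq\mathcal{G}^1$, obtaining transition times $\tau_i(\bx)\le h(t)$ and a point $y(\bx)$ with $d_t(f_{a_i(\bx)}(y(\bx)),x_i)<\eta$, where $a_i(\bx):=\sum_{j<i}(t+\tau_j(\bx))$. The $n$ windows $[a_i(\bx),a_i(\bx)+t]$ are pairwise disjoint, contained in $[0,T]$, and of total length $nt$, so their complement in $[0,T]$ has total length exactly $(n-1)h(t)$. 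Applying $g$-distortion of $\ph$ over $\mathcal{G}^1$ at scale $\eps$ (available after inflating $g$, cf. Remark~\ref{remark:G1G}) on each window and $\ph\ge-\|\ph\|$ on the complement gives
$$\Phi(y(\bx),T)\ \ge\ \sum_{i=1}^n\Phi(x_i,t)\ -\ n\,g(t)\ -\ (n-1)\|\ph\|\,h(t).$$

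\textbf{Separation within a transition pattern, and counting patterns.} Next I would partition $E^n$ by (a discretization of) the transition pattern $\btau(\bx)=(\tau_1(\bx),\dots,\tau_{n-1}(\bx))$. If $\bx\ne\bx'$ have the same transition pattern and $k$ is the first index with $x_k\ne x_k'$, then $a_k(\bx)=a_k(\bx')$, so the $k$-th windows coincide and $d_T(y(\bx),y(\bx'))>d_t(x_k,x_k')-2\eta>3\eta$; hence $\bx\mapsto y(\bx)$ is injective on each pattern class with $(T,\eta)$-separated image, and with the weight bound this gives
$$\sum_{\bx:\,\btau(\bx)=\btau}\ \prod_{i=1}^ne^{\Phi(x_i,t)}\ \le\ e^{n\,g(t)+(n-1)\|\ph\|h(t)}\,\Lambda(X,\eta,T).$$
It then remains to bound the number of transition patterns by $(e^{C_1h(t)})^n$: with transitions ranging in the grid $\{0,1,\dots,\lceil h(t)\rceil\}$ there are at most $(\lceil h(t)\rceil+1)^{n-1}$ of them, and since $h\ge1$ one has $\lceil h(t)\rceil+1\le e^{(C_1-\|\ph\|)h(t)}$ once $C_1\ge\|\ph\|+3$. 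Summing the previous display over patterns and taking $\sup_E$ then yields \eqref{equpperboundmultipleG}.

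\textbf{Main obstacle.} The genuinely delicate point is the discretization of the transition times in the counting step. Weak controlled specification only provides \emph{real-valued} transitions $\tau_i\le h(t)$, whereas a $t$-uniform constant $C_1$ requires grouping them into $\sim h(t)$ classes per slot; so one must either arrange that the transitions can be taken in a fixed integer grid (consistent with the discretization underlying the passage to $\mathcal{G}^1$) or round each $\tau_i(\bx)$ to such a grid and re-verify the shadowing and the injectivity above at the slightly larger scale $5\eta$, all while keeping the resulting cumulative time drift, and hence $C_1$, independent of $t$. The remaining ingredients—the triangle inequality for $d_t$, $g$-distortion over $\mathcal{G}^1$, the trivial bound on $\ph$ along the transition gaps, and the definitions of the partition sums—are routine.
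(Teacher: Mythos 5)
Your strategy matches the paper's almost exactly: concatenate the $n$ segments via weak controlled specification, compare weights using $g$-distortion over $\mathcal{G}^1$ on the good windows and the crude $\|\ph\|$ bound on the gaps, then count the glued points by grouping according to (a discretization of) the transition times. The one place you hedge — the discretization — is indeed where the paper must be careful, and it resolves it as you anticipate in your second alternative, not your first: an integer grid on the $\tau_i$ is \emph{too coarse} for the separation claim, since a shift of up to $1$ in $a_k$ can move $f_{a_k}(y)$ by more than $\eta$ and destroy the $d_t$-separation. The paper instead picks $m=m(\eta)\ge 2$ with $\max_{x,\,s\in[0,1/m]}d(x,f_sx)<\eta$, partitions $[0,(n-1)h(t)]$ into subintervals of length $1/m$, and records the pattern of which interval each \emph{partial sum} $\sum_{i\le k}\tau_i$ falls into (discretizing partial sums rather than each $\tau_i$ individually avoids the cumulative drift you worried about). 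This gives at most $(mh(t)+1)^{n-1}$ classes, inside each of which your separation argument runs at the cost of one extra $\eta$, and produces $C_1=2\|\ph\|+m$ — the $m(\eta)$-dependence is exactly why $C_1$ is allowed to depend on the scale $5\eta$ in the statement. Your weight estimate ($(n-1)\|\ph\|h(t)$ vs.\ the paper's $2(n-1)\|\ph\|h(t)$) is slightly sharper and equally valid; the paper also composes with a projection $p$ onto a maximal $(\eta,T_n)$-separated set and counts fibers of $p\circ y$ rather than arguing separation of the $y(\bx)$ directly, but the two bookkeeping devices are interchangeable. Net: same proof, with the grid resolution $1/m(\eta)$ (not $1$) being the ingredient you'd need to fill in to make your counting step $(\lceil h(t)\rceil+1)^{n-1}$ and the final choice of $C_1$ correct.
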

\begin{proof}
    The proof parallels which of \cite[Proposition 4.3]{CT16}. Fix any constant $\lambda\in (0,\Lambda(\mathcal{G}^1,5\eta,t))$, and let $E_t^{5\eta}$ be a $(t,5\eta)$-separated set for $(\mathcal{G}^1)_t$ satisfying
    $$
\sum_{x\in E_t^{5\eta}}e^{\Phi(x,t)}>\lambda.
    $$
    Let $(E_t^{5\eta})^n$ be the self-product of $E_t^{5\eta}$ for $n$ times. By applying weak controlled specification over $\mathcal{G}^1$ at scale $\eta$, for any $\mathsf{x}:=\{x_i\}_{i=1}^n\in (E_t^{5\eta})^n$, there exist $\{\tau_i\}_{i=1}^{n-1}\in [0,h(t)]^{n-1}$ and $y=y(\mathsf{x})=\text{Spec}_{\{\tau_i\}}^{n,\eta}(\{(x_i,t)\})$. Here we may consider $y$ as a map mapping $(E_t^{5\eta})^n$ to $X$. Write $T_n:=tn+(n-1)h(t)$. Let $E_n$ be a maximally separated $(\eta,T_n)$-separated set for $X$, and $p:X\to E_n$ be the map that sends $x\in X$ to its closest point in $E_n$ within $d_{T_n}$-metric. Let $\pi:=p\circ y$.

    Let $m=m(\eta)\geq 2$ be an integer such that 
    $$\max_{x\in X,s\in [0,\frac{1}{m}]}\{d(x,f_s(x))\}<\eta.$$ 
    Partition $[0,(n-1)h(t)]$ into $(n-1)mh(t)$ sub-intervals $\{I_j\}_{j=1}^{(n-1)mh(t)}$ with equal length. For each $\mathsf{x}\in (E_t^{5\eta})^n$, let $l(\mathsf{x})=\{j_i\}_{i=1}^{n-1}$ be such that $\sum_{i=1}^{k}\tau_i\in I_{j_k}$ for all $k\in \{1,\cdots,n-1\}$. Notice that the set $\{l(\mathsf{x}):\mathsf{x}\in (E_t^{5\eta})^n\}$ consists of at most $(mh(t)+1)^{n-1}$ elements. Following the proof of \cite[Lemma 4.4]{CT16}, if $l(\mathsf{x}_1)=l(\mathsf{x}_2)$, then $\pi(\mathsf{x}_1)\neq \pi(\mathsf{x}_2)$. Therefore, for each $z\in E_n$, we have
    \begin{equation} \label{eqpreimagecardinality}
        \# \pi^{-1}(z)\leq (mh(t)+1)^{n-1}.
    \end{equation}
    In the mean time, following the proof of \cite[Lemma 4.5]{CT16}, for each $\mathsf{x}\in (E_t^{5\eta})^n$, we have
    \begin{equation} \label{eqvariation1}
        \prod_{i=1}^n e^{\Phi(x_i,t)}\leq e^{ng(t)+2(n-1)h(t)||\ph||}e^{\Phi(\pi(\mathsf{x}),T_n)}.
    \end{equation}
    Adding the LHS of \eqref{eqvariation1} over all possible $\mathsf{x}$, we have
    $$
\begin{aligned}
    \lambda^n
    &<(\Lambda(\mathcal{G}^1,5\eta,t))^n \\
    &=\sum_{\mathsf{x}}\prod_{i=1}^n e^{\Phi(x_i,t)} \\
    &\leq e^{ng(t)+2(n-1)h(t)||\ph||}\sum_{\mathsf{x}}e^{\Phi(\pi(\mathsf{x}),T_n)} \\
    &\leq e^{ng(t)+2(n-1)h(t)||\ph||}(mh(t)+1)^{n-1}\Lambda(X,\eta,T_n) \\
    &< e^{ng(t)+(n-1)(2||\ph||+m)h(t)}\Lambda(X,\eta,T_n) \\
\end{aligned}
    $$
    where the inequality on the fourth line follows from \eqref{eqpreimagecardinality}. Writing $C_1:=2||\ph||+m$, we conclude the proof of the lemma.
\end{proof}
By taking logarithm, dividing by $nt$, making $n\to \infty$ on both sides of \eqref{equpperboundmultipleG} and applying Proposition \ref{propexpansivegood}(1), we have
\begin{corollary}
    For every $t>0$, we have
\begin{equation} \label{equpperboundGonescale}
    \Lambda(\mathcal{G}^1,5\eta,t)< e^{C_1h(t)+g(t)}e^{(t+h(t))P}.
\end{equation}
\end{corollary}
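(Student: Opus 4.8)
The plan is to obtain \eqref{equpperboundGonescale} from the multiplicative estimate \eqref{equpperboundmultipleG} of Lemma \ref{lemmaGupperbound} by the usual logarithmic-limit argument. Fix $t>0$. Since $h$ is a fixed non-decreasing function and $t$ is fixed, $h(t)$ is a constant, so the times $T_n:=nt+(n-1)h(t)$ increase to $\infty$ with $T_n/n\to t+h(t)$. Taking logarithms in \eqref{equpperboundmultipleG} and dividing by $nt$ gives, for every $n$,
$$\frac{1}{t}\log\Lambda(\mathcal{G}^1,5\eta,t)\ \le\ \frac{C_1h(t)+g(t)}{t}+\frac{1}{nt}\log\Lambda(X,\eta,T_n).$$

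Next I would pass to the limit in $n$. The left-hand side does not depend on $n$, so it suffices to bound $\limsup_{n\to\infty}\frac{1}{nt}\log\Lambda(X,\eta,T_n)$. Writing $\frac{1}{nt}\log\Lambda(X,\eta,T_n)=\frac{T_n}{nt}\cdot\frac{1}{T_n}\log\Lambda(X,\eta,T_n)$, the first factor converges to $(t+h(t))/t$, while $\{T_n\}$ is a sequence of times tending to infinity, so $\limsup_n\frac{1}{T_n}\log\Lambda(X,\eta,T_n)\le \limsup_{T\to\infty}\frac{1}{T}\log\Lambda(X,\eta,T)=P(\varphi,\eta)$ by the definition of the pressure at scale $\eta$. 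Hence $\limsup_n\frac{1}{nt}\log\Lambda(X,\eta,T_n)\le\frac{t+h(t)}{t}P(\varphi,\eta)$, and after multiplying the displayed inequality through by $t$ and letting $n\to\infty$ we get
$$\log\Lambda(\mathcal{G}^1,5\eta,t)\ \le\ C_1h(t)+g(t)+(t+h(t))P(\varphi,\eta).$$
Finally, since $\eta\in(0,\eps/40)\subset(0,\eps/2)$, the hypothesis $P(\varphi,\eta)=P(\varphi)=P$ of Theorem \ref{thm:weprove} (which is precisely what Proposition \ref{propexpansivegood}(1) supplies from entropy expansiveness) lets us replace $P(\varphi,\eta)$ by $P$; exponentiating then yields \eqref{equpperboundGonescale}.

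I do not anticipate any serious obstacle: the only point needing a moment's care is that $\{T_n\}$ is a specific sequence of times rather than the full continuous parameter, so one only gets $P(\varphi,\eta)$ as an upper bound through the $\limsup$ in its definition (not as a limit), and the rescaling factor $T_n/(nt)\to(t+h(t))/t$ must be carried along. The genuine content — that the specification-induced multiplicity costs only $e^{C_1h(t)+g(t)}$, and that the partition sums at the small scale $\eta$ already attain the full pressure $P$ — has already been established in Lemma \ref{lemmaGupperbound} and Proposition \ref{propexpansivegood}(1). Strictly speaking this argument gives \eqref{equpperboundGonescale} with ``$\le$''; the strict inequality follows at once by taking the constant $C_1>2$ of Lemma \ref{lemmaGupperbound} strictly larger than the value $2\|\varphi\|+m$ produced in its proof, since $h(t)\ge 1$.
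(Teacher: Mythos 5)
Your argument is correct and is essentially the paper's own proof: take logarithms in \eqref{equpperboundmultipleG}, divide by $nt$, let $n\to\infty$ using $T_n/(nt)\to (t+h(t))/t$, and invoke $P(\varphi,\eta)=P(\varphi)$ (which in the setting of Theorem \ref{thm:weprove} is a standing hypothesis, supplied by Proposition \ref{propexpansivegood}(1)). Your closing remark about enlarging $C_1$ to turn the resulting $\le$ into the stated $<$ (using $h(t)\ge 1$) is a small but legitimate point that the paper passes over silently.
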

It follows immediately from \eqref{equpperboundGonescale}, $C_1>2$, and the fact of $\ph$ being $g$-distorted at scale $\eps$ that
\begin{equation} \label{equpperboundGtwoscale}
    \Lambda(\mathcal{G}^1,5\eta,\eps,t)<
e^{C_1(h(t)+g(t))}e^{(t+h(t))P}
\end{equation}

In view of the assumption of Theorem \ref{thm:a}, let $\{t_k\}_{k\in \mathbb{N}}$ be an increasing sequence of positive numbers satisfying
\begin{equation} \label{seq}
\lim_{k\to \infty}\frac{h(t_k)+g(t_k)}{\log t_k}=0.
\end{equation}
For any $k\in \mathbb{N}$, we say a time $t>0$ is $k$-iconic if $t\in [\frac{t_k}{4},t_k]$. We also say $t$ is iconic if it is $k$-iconic for some $k$. For a collection of orbit segments with length being $k$-iconic, when $k$ is large enough, we will show that amounts of orbital information is occupied by those segments whose major part is in $\mathcal{G}$, as stated in the following lemma.
\begin{lemma} \label{lemmaGmajor}
    Let $\alpha:\mathbb{R}^+\to \mathbb{R}$ be a monotonic function satisfying $$\liminf_{t\to \infty}\frac{\alpha(t)}{h(t)+g(t)}>-\infty.$$ Then there exist constants $n_{\alpha}\in \mathbb{N}$ and $C_2=C_2(\alpha)$ such that the following holds true: for every $n>n_{\alpha}$, $t>0$ being $n$-iconic, and $\mathcal{C}\subset X\times \mathbb{R}^+$ such that $\Lambda(\mathcal{C},10\eta,\frac{\eps}{2},t)>e^{tP+\alpha(t)}$, we have
    $$
\Lambda(\mathcal{C}\cap\mathcal{G}^{C_2(h+g)},10\eta,\frac{\eps}{2},t)>\frac{e^{tP+\alpha(t)}}{2}.
    $$
\end{lemma}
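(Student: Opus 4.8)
The plan is to argue by contradiction via a decomposition-counting estimate in the spirit of \cite[Lemma 4.6 / Proposition 4.9]{CT16}, adapted to the two-scale pressure and to the weak controlled specification framework already set up above. Suppose the conclusion fails, so that $\Lambda(\mathcal{C}\cap \mathcal{G}^{C_2(h+g)},10\eta,\frac{\eps}{2},t)\leq \tfrac{1}{2}e^{tP+\alpha(t)}$ while $\Lambda(\mathcal{C},10\eta,\frac{\eps}{2},t)>e^{tP+\alpha(t)}$; then the orbit segments in $\mathcal{C}$ whose ``good core'' $g(x,t)$ satisfies $t-g(x,t)=p(x,t)+s(x,t)>C_2(h(t)+g(t))$ must themselves carry partition sum at least $\tfrac12 e^{tP+\alpha(t)}$. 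The first step is to classify an $(t,10\eta)$-separated subset $E$ of $(\mathcal{C})_t$ witnessing this, by the value of $(p,s)$: partition $[0,t]^2$ into unit squares, so there are at most $(t+1)^2$ classes, and in the dominant class $p(x,t)\in[a,a+1)$, $s(x,t)\in[b,b+1)$ for some fixed $a,b$ with $a+b>C_2(h(t)+g(t))-2$. So one reduces to estimating a single ``prefix length $\approx a$, core length $\approx t-a-b$, suffix length $\approx b$'' stratum, losing only a factor $(t+1)^{-2}$.

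The second step is the gluing-and-injection estimate that bounds this stratum. Each $(x,t)$ in the stratum splits as $(x,p)\in\mathcal{P}$, $(f_p x, g)\in\mathcal{G}$, $(f_{p+g}x,s)\in\mathcal{S}$ with $p\approx a$, $g\approx t-a-b$, $s\approx b$. Mapping $x\mapsto$ (prefix point in a maximal $(a,\eta)$-separated set for $X\times\mathbb{R}^+$, core point in a maximal $(g,5\eta)$-separated set for $\mathcal{G}^1$, suffix point in a maximal $(b,\eta)$-separated set for $X\times\mathbb{R}^+$) is injective up to bounded multiplicity, because $10\eta$-separation of the $x$'s forces separation in one of the three coordinates at scale $\approx 2\eta$. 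Since $\Phi$ is additive along the concatenation up to the controlled distortion $g(\cdot)$ on $\mathcal{G}^1$ and bounded boundary terms $O(\|\ph\|)$, this yields, using $\Lambda(X,\eta,\cdot)\asymp e^{(\cdot)P}$ (Proposition \ref{propexpansivegood}(1)) for the prefix and suffix and \eqref{equpperboundGtwoscale} for the core,
$$
\tfrac12 e^{tP+\alpha(t)}\ \le\ (t+1)^2\, e^{aP}\,e^{bP}\, e^{C_1(h(g_0)+g(g_0))}\,e^{(g_0+h(g_0))P}\, e^{O(\|\ph\|)},
$$
where $g_0\approx t-a-b$. Since $a+b+g_0\approx t$ and $h$ is monotone with $h(g_0)\le h(t)$ (similarly $g$), the $e^{tP}$ factors cancel and one is left with
$$
e^{\alpha(t)}\ \lesssim\ (t+1)^2\, e^{C_1(h(t)+g(t)) + h(t)P + O(\|\ph\|)}.
$$

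The third step is to derive the contradiction using iconicity and the choice of $C_2$. Because $t$ is $n$-iconic we have $t\in[t_n/4,t_n]$, and along the sequence $\{t_k\}$ we have $(h(t_k)+g(t_k))/\log t_k\to 0$; monotonicity of $h,g$ together with $t\asymp t_n$ transfers this to say that for $n$ large, $h(t)+g(t)$ is small relative to $\log t$, hence the polynomial factor $(t+1)^2$ is absorbed. So for $n>n_\alpha$ the displayed inequality forces $\alpha(t)\lesssim (C_1+P)(h(t)+g(t)) + O(\|\ph\|)$, contradicting the hypothesis $\liminf_{t\to\infty}\alpha(t)/(h(t)+g(t))>-\infty$ once we choose $C_2$ large enough — the point is that if $C_2$ is chosen bigger than this $\liminf$'s deficit, then the ``bad'' stratum (core length forced to be $\le t-C_2(h+g)$) simply cannot carry that much mass. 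The main obstacle I anticipate is bookkeeping the additive errors precisely: ensuring that the boundary distortion terms from cutting out $(x,p)$ and the suffix are genuinely $O(\|\ph\|)$ rather than growing with $a,b$ (this is exactly why one passes to $\mathcal{G}^1$ and why the $g$-distortion is only needed on $\mathcal{G}$), and verifying the injectivity-up-to-bounded-multiplicity of the three-coordinate map at the shifted separation scales — the constants $5\eta$, $10\eta$ in the statement are calibrated precisely so that the specification perturbation ($\eta$) and the discretization/flow-continuity perturbations leave enough room for all three comparisons to go through.
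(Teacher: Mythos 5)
Your proposal has a genuine gap: it never uses the pressure gap hypothesis (condition (4) of Theorem \ref{thm:weprove}), and without it the argument cannot close. In your gluing estimate you bound the prefix and suffix by $\Lambda(X,\eta,\cdot)\asymp e^{(\cdot)P}$, i.e.\ by the pressure of the \emph{whole} system. The paper instead uses that $\delta:=P-P(\mathcal{D}^c\cup[\mathcal{P}]\cup[\mathcal{S}],\ph,5\eta,\eps)>0$, giving $\Lambda_p([\mathcal{P}],\ph,5\eta,\eps)<C_{5\eta}e^{p(P-\delta)}$ and likewise for $[\mathcal{S}]$; the extra factor $e^{-(p+s)\delta}$ is precisely what converts the constraint $p+s>C_2(h(t)+g(t))$ into a decaying term $e^{-C_2\delta(h(t)+g(t))}$ that can beat $e^{\alpha(t)}$. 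With your $e^{(\cdot)P}$ bound, all the $e^{tP}$ factors just cancel and the threshold $C_2(h+g)$ never enters — observe that $C_2$ does not appear anywhere in your final inequality, which is a red flag that the estimate cannot be ``made to work by choosing $C_2$ large.''

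The proposed endgame is also not a contradiction. You derive an \emph{upper} bound $\alpha(t)\lesssim (C_1+P)(h(t)+g(t))+O(\|\ph\|)$ and say this contradicts $\liminf_{t\to\infty}\alpha(t)/(h(t)+g(t))>-\infty$. But that hypothesis is a \emph{lower} bound on $\alpha$ (it gives $C_\alpha\in\mathbb{R}$ with $\alpha(t)>C_\alpha(h(t)+g(t))$ for large $t$), which is perfectly compatible with your derived upper bound. In the paper, the quantity $e^{\alpha(t)}>e^{C_\alpha(h(t)+g(t))}$ is used as a \emph{lower} bound on the total mass, which must dominate the geometric-series tail $\sim e^{-C_2\delta(h(t)+g(t))}$; choosing $C_2$ large relative to $|C_\alpha|,C_1,|P|,\delta^{-1}$ makes the tail strictly less than $e^{\alpha(t)}/4$. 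That is where $C_2$ does its work. You also omit the contribution of orbit segments in $\mathcal{C}\cap\mathcal{D}^c$, which the paper handles with the same pressure-gap bound $\Lambda_t(\mathcal{D}^c,\ph,10\eta,\eps)<C_{5\eta}e^{t(P-\delta)}$; it must be accounted for separately since such segments have no $(p,g,s)$ decomposition at all. Your ``dominant $(p,s)$-cell'' trick with the $(t+1)^2$ factor differs from the paper's direct sum with geometric series over all $(p,s)$ with $\max\{p,s\}>C_2(h+g)$; once the pressure gap is in place either would work, but the geometric series avoids having to absorb the polynomial factor.
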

\begin{proof}
    Fix $\delta\in (0,P-P(\mathcal{D}^c\cup [\mathcal{P}] \cup [\mathcal{S}],\ph,5\eta,\eps))$. Let $C_{5\eta}>0$ be such that
\begin{equation} \label{eqbadpartitionsum}
    \Lambda_t(\mathcal{D}^c\cup [\mathcal{P}] \cup [\mathcal{S}],\ph,5\eta,\eps)<C_{5\eta}e^{t(P-\delta)} \text{ for all }t>0
\end{equation}
    and $C_{\alpha}\in \mathbb{R}$ be such that
    $$
\alpha(t)>C_{\alpha}(h(t)+g(t)) \text{ for all }t>0.
    $$
    Write
    $$
C_2=C_2(\alpha):=\frac{C_1+|P|+|C_{\alpha}|+16(C_{5\eta})^2/(1-e^{-\delta})^2}{\delta}
    $$
    where $C_1$ is from Lemma \ref{lemmaGupperbound}. Choose $n_{\alpha}$ large enough such that for all $n>n_{\alpha}$, we have
    \begin{equation} \label{eq:nalphacondition}
        \log 4(C_{5\eta})^2< \delta t_n/4-|C_{\alpha}|(h(t_n)+g(t_n)).
    \end{equation}
  Let $n>n_{\alpha}$, $t$ be $n$-iconic, and $E_t$ be a $(10\eta,t)$-separated set for $\mathcal{C}_t$ satisfying
\begin{equation} \label{eqEtchoice}
    \sum_{x\in E_t}e^{\Phi_{\frac{\eps}{2}}(x,t)}>e^{tP+\alpha(t)}. 
\end{equation}
    If $x\in E_t\cap \mathcal{D}_t$, write $p(x):=\lfloor p(x,t) \rfloor$, $s(x):=\lfloor s(x,t) \rfloor$, where $\lfloor c \rfloor$ denotes the greatest integer not greater than $c$. Also write $g(x):=t-p-s$. Observe that we have $(x,p(x))\in [\mathcal{P}]$, $(f_{p(x)}(x),g(x))\in \mathcal{G}^1$, and $(f_{p(x)+g(x)}(x),s(x))\in [\mathcal{S}]$. For any pair of non-negative integers $(p,s)$ satisfying $p+s\leq \lfloor t \rfloor$, write
    $$
E(p,s):=\{x\in E_t\cap \mathcal{D}_t: p(x)=p,s(x)=s\}.
    $$
    For each integer $i\in [0,\lfloor t \rfloor]$, let $P_i$ (\emph{resp.} $S_i$) be a maximal $(5\eta,i)$-separated set for $[\mathcal{P}]_i$ (\emph{resp.} $[\mathcal{S}]_i$). We also define $G_u$ similarly w.r.t $(\mathcal{G}^1)_u$ for general $u\in [0,\lfloor t \rfloor]$. Then we can define a map $\pi:E(p,s) \to P_p\times G_{t-p-s} \times S_s$ in the form of $\pi(x)=(x_p,x_g,x_s)$ satisfying
    $$
\max\{d_p(x,x_p),d_{t-p-s}(f_p(x),x_g),d_s(f_{t-s}(x),x_s)\}<5\eta.
    $$
    Observe that the map $\pi$ is injective. Meanwhile, for every such $x\in E(p,s)$, we have
    \begin{equation} \label{eqvariationalongxt}
\begin{aligned}
        \Phi_{\frac{\eps}{2}}(x,t)
        &\leq \Phi_{\frac{\eps}{2}+5\eta}(x_p,p)+\Phi_{\frac{\eps}{2}+5\eta}(x_g,t-p-s)+\Phi_{\frac{\eps}{2}+5\eta}(x_s,s)\\
        &\leq \Phi_{\eps}(x_g,t-p-s)+\Phi_{\eps}(x_p,p)+\Phi_{\eps}(x_s,s).
\end{aligned}
    \end{equation}
    Then, by writing $p\vee s:=\min\{p,s\}$ and $a(t):=C_2(h(t)+g(t))$, we have
\begin{equation} \label{eqEtsum}
\begin{aligned}
    &\sum_{x\in E_t}e^{\Phi_{\frac{\eps}{2}}(x,t)} \\
   = &(\sum_{x\in E_t\cap (\mathcal{D}^c)_t}+\sum_{p,s\in \mathbb{N}:p+s\leq \lfloor t \rfloor}\sum_{x\in E_{p,s}})e^{\Phi_{\frac{\eps}{2}}(x,t)} \\
    \leq  & \Lambda_t(\mathcal{D}^c,\ph,10\eta,\eps)+\sum_{p\vee s\leq a(t)}e^{\Phi_{\frac{\eps}{2}}(x,t)}+\sum_{p\vee s> a(t)}e^{\Phi_{\eps}(x_g,t-p-s)+\Phi_{\eps}(x_p,p)+\Phi_{\eps}(x_s,s)} \\
     \leq  &\Lambda_t(\mathcal{D}^c,\ph,10\eta,\eps)+\sum_{p\vee s\leq a(t)}e^{\Phi_{\frac{\eps}{2}}(x,t)}\\
     &+\sum_{p\vee s> a(t)}\Lambda_p([\mathcal{P}],\ph,5\eta,\eps)\Lambda_{t-p-s}(\mathcal{G}^1,\ph,5\eta,\eps)\Lambda_s([\mathcal{S}],\ph,5\eta,\eps) \\
     < &C_{5\eta}e^{t(P-\delta)}+\sum_{p\vee s\leq a(t)}e^{\Phi_{\frac{\eps}{2}}(x,t)}+\sum_{p\vee s> a(t)}C_{5\eta}^2e^{(p+s)(P-\delta)}e^{C_1(h(t)+g(t))}e^{(t-p-s+h(t))P} \\
     < &\sum_{p\vee s\leq a(t)}e^{\Phi_{\frac{\eps}{2}}(x,t)}+(C_{5\eta})^2e^{tP}(e^{-\delta t}+\sum_{p\vee s> a(t)}e^{h(t)(C_1+P)+C_1g(t)}e^{-(p+s)\delta}).
\end{aligned}
\end{equation}
Notice that the last summand on the last line, which is 
$$\sum_{p\vee s> a(t)}e^{h(t)(C_1+P)+C_1g(t)}e^{-(p+s)\delta},$$
can be controlled from above by
$$
\begin{aligned}
 &2e^{h(t)(C_1+P)+C_1g(t)}e^{-\lceil a(t) \rceil \delta}\sum_{j=0}^{\infty}(j+1)e^{-j\delta}\\
 <&\frac{2e^{-\lceil a(t) \rceil \delta+h(t)(C_1+P)+C_1g(t)}}{(1-e^{-\delta})^2}.   
\end{aligned}
$$
We claim that
\begin{equation} \label{eqexponential1}
    (C_{5\eta})^2e^{-\delta t}<\frac{e^{-|C_{\alpha}|
    (h(t)+g(t))}}{4}\leq \frac{e^{C_{\alpha}(h(t)+g(t))}}{4}<\frac{e^{\alpha(t)}}{4}
\end{equation}
and
\begin{equation} \label{eqexponential2}
    \frac{2(C_{5\eta})^2e^{(C_1+|P|)(h(t)+g(t))-\delta \lceil a(t) \rceil}}{(1-e^{-\delta})^2}<\frac{e^{C_{\alpha}(h(t)+g(t))}}{4}<\frac{e^{\alpha(t)}}{4}.
\end{equation}
We show how the lemma is concluded by assuming \eqref{eqexponential1} and \eqref{eqexponential2}. Applying \eqref{eqEtchoice}, we may continue \eqref{eqEtsum} as
$$
e^{tP+\alpha(t)}<\sum_{x\in E_t}e^{\Phi_{\frac{\eps}{2}}(x,t)}<\sum_{p\vee s\leq a(t)}e^{\Phi_{\frac{\eps}{2}}(x,t)}+(\frac{e^{\alpha(t)}}{4}+\frac{e^{\alpha(t)}}{4})e^{tP},
$$
which implies that $\Lambda_t(\mathcal{C}\cap \mathcal{G}^{a(t)},10\eta,\frac{\eps}{2})\geq\sum_{p\vee s\leq a(t)}e^{\Phi_{\frac{\eps}{2}}(x,t)}>\frac{e^{tP+\alpha(t)}}{2}$.

It remains to prove \eqref{eqexponential1} and \eqref{eqexponential2}. For \eqref{eqexponential1}, by our choice of $t$ being $n$-iconic, it suffices to show that
$$
e^{-\frac{\delta t_n}{4}}<\frac{e^{-|C_{\alpha}|(h(t_n)+g(t_n))}}{4(C_{5\eta})^2},
$$
which is true due to \eqref{eq:nalphacondition}. Meanwhile, \eqref{eqexponential2} follows immediately from definition of $a(t)$ and our choice on $C_2$. The lemma is then concluded.
\end{proof}

As defined in \cite[(6.1)]{PYY22}, given any $\mathcal{C}\subset X\times \mathbb{R}^+$ and $i,j\in \mathbb{N}$, we denote by
$$
f_{i,j}(\mathcal{C}):=\{(f_i(x),t-i-j):(x,t)\in \mathcal{C},t\geq i+j\}
$$
the collection of orbit segments formed by removing the initial and ending parts with integer length from which of $\mathcal{C}$. The above lemma then has the following consequence.

\begin{corollary} \label{coro:LambdaG1lowerbound}
    Let $\alpha$, $C_2$ and $n_{\alpha}$ be as in the preceding lemma. There exists a constant $C_3=C_3(\eta)$ such that the following holds true: for every $n>n_{\alpha}$, $n$-iconic $t$, and $\mathcal{C}\subset X\times \mathbb{R}^+$ satisfying  $\Lambda_t(\mathcal{C},10\eta,\frac{\eps}{2})>e^{tP+\alpha(t)}$, there exist integers $i_{\alpha},j_{\alpha}\in [0,C_2(h(t)+g(t))]$ such that
    $$
\Lambda_{t-i_{\alpha}-j_{\alpha}}(f_{i_{\alpha},j_{\alpha}}(\mathcal{C})\cap \mathcal{G}^1,10\eta,\frac{\eps}{2})>\frac{C_3e^{(t-i_{\alpha}-j_{\alpha})P+\alpha(t)}}{(h(t)+g(t))^2}.
    $$
\end{corollary}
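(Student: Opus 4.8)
The plan is to feed the heavy separated set produced by Lemma~\ref{lemmaGmajor} through a pigeonhole over the integer parts of the prefix and suffix lengths, and then to peel off these (now integer) prefix and suffix by a two-sided projection onto separated sets for $[\mathcal{P}]$ and $[\mathcal{S}]$, paying for the peeling with the strict pressure gap in condition~(4).

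First I would apply Lemma~\ref{lemmaGmajor} to the given $\alpha$ (legitimate since $t$ is $n$-iconic with $n>n_\alpha$ and $\Lambda_t(\mathcal{C},10\eta,\tfrac{\eps}{2})>e^{tP+\alpha(t)}$): it yields a $(t,10\eta)$-separated set $E\subset(\mathcal{C}\cap\mathcal{G}^{C_2(h+g)})_t$ with $\sum_{x\in E}e^{\Phi_{\eps/2}(x,t)}>\tfrac12e^{tP+\alpha(t)}$. For $x\in E$ set $i(x)=\lfloor p(x,t)\rfloor$ and $j(x)=\lfloor s(x,t)\rfloor$; these are integers in $[0,C_2(h(t)+g(t))]$, and exactly as recorded in the proof of Lemma~\ref{lemmaGmajor} one has $(x,i(x))\in[\mathcal{P}]$, $(f_{i(x)}x,\,t-i(x)-j(x))\in\mathcal{G}^1$ and $(f_{t-j(x)}x,j(x))\in[\mathcal{S}]$, so in particular $(f_{i(x)}x,\,t-i(x)-j(x))\in f_{i(x),j(x)}(\mathcal{C})\cap\mathcal{G}^1$. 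There are at most $(C_2(h(t)+g(t))+1)^2\le(C_2+1)^2(h(t)+g(t))^2$ possible pairs $(i(x),j(x))$, so by pigeonhole some pair $(i_\alpha,j_\alpha)$ is attained on a subset $E'\subset E$ with $\sum_{x\in E'}e^{\Phi_{\eps/2}(x,t)}>\big(2(C_2+1)^2(h(t)+g(t))^2\big)^{-1}e^{tP+\alpha(t)}$.

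The core step is the two-sided projection. Write $u:=t-i_\alpha-j_\alpha$, choose maximal $(5\eta,\cdot)$-separated sets $P^\ast$ for $[\mathcal{P}]_{i_\alpha}$ and $S^\ast$ for $[\mathcal{S}]_{j_\alpha}$, and group $E'$ into classes $E'_{a,c}$ ($a\in P^\ast$, $c\in S^\ast$) according to the pair consisting of a point $a$ of $P^\ast$ nearest to $x$ in $d_{i_\alpha}$ and a point $c$ of $S^\ast$ nearest to $f_{t-j_\alpha}x$ in $d_{j_\alpha}$. If $x\ne x'$ lie in the same $E'_{a,c}$, then $d_{i_\alpha}(x,x')<10\eta$ and $d_{j_\alpha}(f_{t-j_\alpha}x,f_{t-j_\alpha}x')<10\eta$, so---$E$ being $(t,10\eta)$-separated---the separation must occur on the middle window, i.e. $d_u(f_{i_\alpha}x,f_{i_\alpha}x')\ge10\eta$; hence $\{f_{i_\alpha}x:x\in E'_{a,c}\}$ is a $(u,10\eta)$-separated subset of $(f_{i_\alpha,j_\alpha}(\mathcal{C})\cap\mathcal{G}^1)_u$. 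Since $\tfrac{\eps}{2}+5\eta<\eps$ one has $\Phi_{\eps/2}(x,t)\le\Phi_{\eps}(a,i_\alpha)+\Phi_{\eps/2}(f_{i_\alpha}x,u)+\Phi_{\eps}(c,j_\alpha)$ for $x\in E'_{a,c}$, whence $\sum_{x\in E'_{a,c}}e^{\Phi_{\eps/2}(x,t)}\le e^{\Phi_\eps(a,i_\alpha)+\Phi_\eps(c,j_\alpha)}\,\Lambda_u(f_{i_\alpha,j_\alpha}(\mathcal{C})\cap\mathcal{G}^1,\ph,10\eta,\tfrac{\eps}{2})$; summing over $(a,c)$ and factoring the resulting double sum over $P^\ast\times S^\ast$ yields
$$
\frac{e^{tP+\alpha(t)}}{2(C_2+1)^2(h(t)+g(t))^2}<\Lambda_{i_\alpha}([\mathcal{P}],\ph,5\eta,\eps)\;\Lambda_u(f_{i_\alpha,j_\alpha}(\mathcal{C})\cap\mathcal{G}^1,\ph,10\eta,\tfrac{\eps}{2})\;\Lambda_{j_\alpha}([\mathcal{S}],\ph,5\eta,\eps).
$$
To finish, I would bound the outer factors by $C_{5\eta}e^{i_\alpha(P-\delta)}$ and $C_{5\eta}e^{j_\alpha(P-\delta)}$ using condition~(4) in the form \eqref{eqbadpartitionsum} (with the same $\delta>0$ and $C_{5\eta}$), divide through, write $e^{tP}=e^{uP}e^{(i_\alpha+j_\alpha)P}$, and use $e^{(i_\alpha+j_\alpha)\delta}\ge1$; this gives the claim with $C_3:=\big(2C_{5\eta}^2(C_2+1)^2\big)^{-1}$.

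The step I expect to be the crux---and the reason the outer factors must be routed through condition~(4) rather than simply discarded---is controlling the factor $e^{(i_\alpha+j_\alpha)P}$ created when length $t$ is replaced by length $u=t-i_\alpha-j_\alpha$: since $i_\alpha+j_\alpha$ may be of order $h(t)+g(t)$, any cruder treatment of the prefix and suffix leaves a term exponential in $h(t)+g(t)$ in the denominator, which would destroy the estimate. The strict gap $\delta$ in condition~(4) forces $\Lambda_{i_\alpha}([\mathcal{P}],\ph,5\eta,\eps)$ and $\Lambda_{j_\alpha}([\mathcal{S}],\ph,5\eta,\eps)$ to decay like $e^{-i_\alpha\delta}$ and $e^{-j_\alpha\delta}$, which more than absorbs that factor, leaving only the harmless polynomial loss $(h(t)+g(t))^{-2}$ from the pigeonhole; a secondary point, ensuring no room is lost in the $\mathcal{G}^1$ partition sum, is the observation above that fixing the coarsened prefix and suffix of each $x$ automatically separates the middle segments $f_{i_\alpha}x$ at the full scale $10\eta$.
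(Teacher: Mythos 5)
Your proposal is correct and follows essentially the same route as the paper's own proof: apply Lemma~\ref{lemmaGmajor}, pigeonhole over the at most $O\bigl((h(t)+g(t))^2\bigr)$ integer pairs of prefix/suffix lengths, project the surviving class onto maximal $(5\eta,\cdot)$-separated sets for $[\mathcal{P}]$ and $[\mathcal{S}]$ (noting that a shared image forces $10\eta$-separation on the middle window), and absorb the resulting factor $e^{(i_\alpha+j_\alpha)P}$ using the $e^{-(i_\alpha+j_\alpha)\delta}$ decay supplied by the pressure-gap bound \eqref{eqbadpartitionsum}. The only differences from the paper are cosmetic choices of the pigeonhole constant and the explicit value of $C_3$, both of which yield a valid bound of the required form.
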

\begin{proof}
    Let $n,t,\mathcal{C}$ be as in the statement. Following the preceding lemma, there exists a $(10\eta,t)$-separated set $E_t^<$ for $\mathcal{C}$ satisfying
    \begin{enumerate}
        \item $E_t^<\subset \mathcal{D}_t$ and $p(x,t)\vee s(x,t)\leq C_2(h(t)+g(t))$ for all $x\in E_t^<$,
        \item $\sum_{x\in E_t^<}e^{\Phi_{\frac{\eps}{2}}(x,t)}>\frac{e^{tP+\alpha(t)}}{2}$.
    \end{enumerate}
    Since there are at most $(1+\lfloor C_2(h(t)+g(t)) \rfloor)^2$ choices on the pair $(p(x,t),s(x,t))$, there must be a pair of integers $(p,s)\in [0,\lfloor C_2(h(t)+g(t)) \rfloor]^2$ such that
\begin{equation} \label{eqEt<partitionsum}
    \sum_{x\in E_t^<(p,s)}e^{\Phi_{\frac{\eps}{2}}(x,t)}>\frac{e^{tP+\alpha(t)}}{2(1+C_2(h(t)+g(t)))^2}>\frac{e^{tP+\alpha(t)}}{8(C_2(h(t)+g(t)))^2}.
\end{equation}
    where $E_t^<(p,s)$ is defined similarly as $E(p,s)$ using $E_t^<$ in the place of $E$. Let $P_p$ (resp. $S_s$) be a maximal $(5\eta,p)$-separated set for $[\mathcal{P}]_p$ (resp. $(5\eta,s)$-separated set for $[\mathcal{S}]_s$). Let $\pi_{\mathcal{P},\mathcal{S}}:E_t^<(p,s)\to P_p\times S_s$ be the collection of the first and the third components of map $\pi$ in the proof of Lemma \ref{lemmaGmajor}, i.e. for any $x\in E_t^<(p,s)$, $\pi_{\mathcal{P},\mathcal{S}}(x)=(x_p,x_s)$ satisfying
    $$
\max\{d_p(x,x_p),d_s(f_{t-s}(x),x_s)\}<5\eta.
    $$
    Observe that for $x_1,x_2$ sharing the same image under $\pi_{\mathcal{P},\mathcal{S}}$, 
    $$d_{t-p-s}(f_p(x_1),f_p(x_2))>10\eta,$$ 
    and $f_p(x)\in (\mathcal{G}^1)_{t-p-s}$. Then we may continue \eqref{eqEt<partitionsum} as
    \begin{equation}
        \begin{aligned}
        &\frac{e^{tP+\alpha(t)}}{8(C_2(h(t)+g(t)))^2} \\
            <&\sum_{x\in E_t^<(p,s)}e^{\Phi_{\frac{\eps}{2}}(x,t)} \\
            =&\sum_{u_p\in P_p,u_s\in S_s}\sum_{x\in \pi_{\mathcal{P},\mathcal{S}}^{-1}(u_p,u_s)}e^{\Phi_{\frac{\eps}{2}}(x,t)}\\
            \leq &\sum_{u_p\in P_p,u_s\in S_s} e^{\Phi_{\frac{\eps}{2}+5\eta}(u_p,p)+\Phi_{\frac{\eps}{2}+5\eta}(u_s,s)}\sum_{x\in \pi_{\mathcal{P},\mathcal{S}}^{-1}(u_p,u_s)}e^{\Phi_{\frac{\eps}{2}}(f_p(x),t-p-s)} \\
            \leq& \sum_{u_p\in P_p,u_s\in S_s}e^{\Phi_{\eps}(u_p,p)+\Phi_{\eps}(u_s,s)}\Lambda_{t-p-s}(f_{p,s}(\mathcal{C})\cap \mathcal{G}^1,10\eta,\frac{\eps}{2}) \\
            \leq& \Lambda_p(\mathcal{P},5\eta,\eps)\Lambda_s(\mathcal{S},5\eta,\eps)\Lambda_{t-p-s}(f_{p,s}(\mathcal{C})\cap \mathcal{G}^1,10\eta,\frac{\eps}{2}) \\
            <& (C_{5\eta})^2e^{(p+s)(P-\delta)}\Lambda_{t-p-s}(f_{p,s}(\mathcal{C})\cap \mathcal{G}^1,10\eta,\frac{\eps}{2}) \\
            \leq &(C_{5\eta})^2e^{(p+s)P}\Lambda_{t-p-s}(f_{p,s}(\mathcal{C})\cap \mathcal{G}^1,10\eta,\frac{\eps}{2}).
        \end{aligned}
    \end{equation}
    Letting $i_{\alpha}=:p$, $j_{\alpha}:=s$, and $C_3:=\frac{1}{8(C_2C_{5\eta})^2}$, the corollary is concluded.
\end{proof}
A combination of \eqref{equpperboundGtwoscale} and Corollary \eqref{coro:LambdaG1lowerbound} provides us with the following upper bound concerning the partition sum over $X$.
\begin{lemma} \label{lemmaXpartitionsum}
    There exists a constant $C_0=C_0(\eta)>0$ such that for all $t>0$ being iconic, we have
    $$
\Lambda_t(X,10\eta,\frac{\eps}{2})\leq e^{C_0(h(t)+g(t))+tP}.
    $$
\end{lemma}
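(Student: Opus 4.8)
The plan is to combine Corollary \ref{coro:LambdaG1lowerbound} with the $\mathcal{G}^1$-upper bound \eqref{equpperboundGtwoscale} in a proof by contradiction. Recall that $h$ is non-decreasing and that, as throughout, $g$ may be taken non-decreasing. I would apply Corollary \ref{coro:LambdaG1lowerbound} with the monotonic threshold $\alpha(t):=\Theta(h(t)+g(t))$ for a constant $\Theta=\Theta(\eta)\geq 1$ to be pinned down at the end; since $\liminf_{t\to\infty}\alpha(t)/(h(t)+g(t))=\Theta>-\infty$, the corollary produces $C_2=C_2(\alpha)$, an integer $n_\alpha$ (which I would enlarge, using $h(t_n)+g(t_n)=o(\log t_n)$, so that $2C_2(h(t_n)+g(t_n))<t_n/4$ for all $n>n_\alpha$), and the constant $C_3$.

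Now fix $n>n_\alpha$ and an $n$-iconic $t$, and suppose toward a contradiction that $\Lambda_t(X,10\eta,\tfrac{\eps}{2})>e^{tP+\alpha(t)}$. Taking $\mathcal{C}=X\times\mathbb{R}^+$ in the corollary — for which $f_{i,j}(X\times\mathbb{R}^+)=X\times\mathbb{R}^+$ since each $f_i$ is a bijection of $X$, so $f_{i,j}(X\times\mathbb{R}^+)\cap\mathcal{G}^1=\mathcal{G}^1$ — produces integers $i_\alpha,j_\alpha\in[0,C_2(h(t)+g(t))]$ with $t':=t-i_\alpha-j_\alpha>0$ and
$$
\Lambda_{t'}\bigl(\mathcal{G}^1,10\eta,\tfrac{\eps}{2}\bigr)>\frac{C_3\,e^{t'P+\alpha(t)}}{(h(t)+g(t))^2}.
$$
Because $\Lambda_{t'}$ is non-increasing in the separation scale and non-decreasing in the distortion scale, the left side is $\leq\Lambda_{t'}(\mathcal{G}^1,5\eta,\eps)$, which \eqref{equpperboundGtwoscale} — valid for every $t'>0$, so $t'$ need not be iconic — bounds by $e^{C_1(h(t')+g(t'))+(t'+h(t'))P}$. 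Cancelling $e^{t'P}$ from the two displays, then using $h(t')\leq h(t)$ and $g(t')\leq g(t)$ (monotonicity and $t'\leq t$), and finally taking logarithms with $\log(h(t)+g(t))\leq h(t)+g(t)$ (valid since $h(t)+g(t)\geq 2$), I would arrive at
$$
\Theta(h(t)+g(t))=\alpha(t)<|\log C_3|+(C_1+|P|+2)(h(t)+g(t));
$$
that is, $(\Theta-C_1-|P|-2)(h(t)+g(t))<|\log C_3|$, which (as $h(t)+g(t)\geq 2$) is impossible once $\Theta>C_1+|P|+2+\tfrac{1}{2}|\log C_3|$.

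That last choice of $\Theta$ is where the one real obstacle lies: $C_3$ itself depends on $\Theta$, through $C_2=C_2(\alpha)$. Unwinding the formula for $C_2$ in Lemma \ref{lemmaGmajor} (taking $C_\alpha$ comparable to $\Theta$) gives $C_2\leq A+B\Theta$ for constants $A,B$ depending only on $\eta$, so $C_3^{-1}=8(C_2C_{5\eta})^2$ grows only polynomially in $\Theta$ and $|\log C_3|=O(\log\Theta)$; hence the inequality $\Theta>C_1+|P|+2+\tfrac{1}{2}|\log C_3|$ holds for all sufficiently large $\Theta$, and I would fix such a $\Theta$. This establishes the bound for every $n$-iconic $t$ with $n>n_\alpha$. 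The remaining iconic times form a bounded set contained in $[t_1/4,t_{n_\alpha}]$, on which compactness of $X$ gives $\Lambda_t(X,10\eta,\tfrac{\eps}{2})\leq N\,e^{t_{n_\alpha}\|\ph\|}$ for a constant $N$; since $h(t)+g(t)\geq 2$ and $tP\geq -t_{n_\alpha}|P|$ there, taking $C_0:=\max\{\Theta,\tfrac{1}{2}(\log N+t_{n_\alpha}(|P|+\|\ph\|))\}$ finishes the proof. Everything else is routine bookkeeping with the scale monotonicities of $\Lambda$ and with the fact that \eqref{equpperboundGtwoscale} holds at all times, not just iconic ones.
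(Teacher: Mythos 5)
Your proof is correct and implements precisely the strategy the paper only gestures at (``a combination of \eqref{equpperboundGtwoscale} and Corollary \ref{coro:LambdaG1lowerbound}''): run the corollary with $\mathcal{C}=X\times\mathbb{R}^+$ (noting $f_{i,j}(X\times\mathbb{R}^+)=X\times\mathbb{R}^+$), bound the resulting $\Lambda_{t'}(\mathcal{G}^1,\cdot,\cdot)$ by \eqref{equpperboundGtwoscale}, cancel $e^{t'P}$, and extract a contradiction once $\Theta$ is large; the leftover finitely many iconic scales are handled by compactness. One small simplification worth noting regarding the circularity you flag at the end: since $\alpha(t)=\Theta(h(t)+g(t))>0$, one may simply take $C_\alpha=0$ inside the proof of Lemma \ref{lemmaGmajor}, which makes $C_2$, $n_\alpha$, and hence $C_3$ independent of $\Theta$ and dissolves the apparent dependence entirely; your ``$|\log C_3|=O(\log\Theta)$'' argument is correct but not needed.
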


\subsection{Lower bound of partition sums}
The following result provides us with a key uniform lower bound of partition sums over sets with positive measure w.r.t any given equilibrium state for $(X,F,\ph)$. It is in this place where we are forced to use the two-scale partition sum involving both $\eta$ and $\eps$.
\begin{proposition} \label{propkey}
    For every $\beta>0$, there is a constant $C_{\beta}>0$ such that the following is true. Let $\mu$ be an equilibrium state for $(X,F,\ph)$, and $A\subset X$ be a set satisfying $\mu(A)>\beta$. Then for every iconic $t>0$ and  maximal $(10\eta,t)$-separated set $E^A_{10\eta,t}$ for $A$, we have
    $$
\sum_{x\in E^A_{10\eta,t}}e^{\Phi_{10\eta}(x,t)}\geq e^{-C_{\beta}(h(t)+g(t))+tP}.
    $$
    In particular, we have
    $$
\Lambda_t(A,10\eta,\frac{\eps}{2})\geq e^{-C_{\beta}(h(t)+g(t))+tP}.
    $$
\end{proposition}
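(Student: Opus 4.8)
The plan is to reduce to an ergodic equilibrium state, establish a finite-time Gibbs-type comparison between the $\mu$-measure of $t$-Bowen balls and the integrated potential, and then convert this into a count of separated points by a covering argument. The key point governing the whole proof is that the equilibrium property must be fed in through the \emph{exact} identity $h_\mu(f_t)=h_\mu(f_t,\mathcal{A}_t)$ furnished by Proposition~\ref{propexpansivegood}(2): only error-free entropy inputs are compatible with the target error $O(h(t)+g(t))$, so ingredients with uncontrolled convergence rates are not allowed.

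I would first reduce to an ergodic $\mu$. Writing the ergodic decomposition $\mu=\int\nu\,d\tau(\nu)$, the fact that $P=\int\bigl(h_\nu(f_1)+\int\ph\,d\nu\bigr)d\tau(\nu)$ with each integrand at most $P$ shows that $\tau$-a.e. $\nu$ is an equilibrium state, and $\int\nu(A)\,d\tau(\nu)=\mu(A)>\beta$ then produces, by Chebyshev, an ergodic equilibrium state $\nu$ with $\nu(A)>\beta/2$; replacing $(\mu,\beta)$ by $(\nu,\beta/2)$ we may assume $\mu$ ergodic. I would also dispose of bounded $t$ at once: any maximal $(10\eta,t)$-separated set for $A\neq\emptyset$ is nonempty, so $\sum_{x}e^{\Phi_{10\eta}(x,t)}\ge e^{-t\|\ph\|}$, and this already exceeds $e^{tP-C_\beta(h(t)+g(t))}$ for all $t$ below any fixed threshold once $C_\beta$ is taken large (using $h,g\ge1$); only large iconic $t$ then needs work.

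For the core estimate I would fix a finite partition $\mathcal{A}$ of $X$ with $\mu(\partial\mathcal{A})=0$ and diameter so small that its $t$-step dynamical refinement $\mathcal{A}_t$ has $\text{Diam}_t(\mathcal{A}_t)<\eps$, so that $h_\mu(f_t,\mathcal{A}_t)=h_\mu(f_t)=t\,h_\mu(f_1)$. With $\psi_t(x):=\int_0^t\ph(f_s x)\,ds$ the equilibrium identity reads $h_\mu(f_t,\mathcal{A}_t)+\int\psi_t\,d\mu=tP$; feeding this into the error-free Jensen/Gibbs inequality $H_\mu(\xi)+\int\psi_t\,d\mu\le\log\sum_{C\in\xi}e^{\sup_C\psi_t}$ applied to $\xi=\mathcal{A}_t\vee\{A,A^c\}$, and using $\mu(A)>\beta$ together with the maximal ergodic theorem recorded above (applied to the relevant mean-zero functions, forward and backward) to keep a definite proportion of the weight on atoms meeting $A$, I would obtain $\sum_{C:\,C\cap A\neq\emptyset}e^{\sup_C\psi_t}\ge c_\beta e^{tP}$ for some $c_\beta>0$; this $c_\beta$ is the source of the $\beta$-dependence of $C_\beta$. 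The passage to $\sum_{x\in E^A_{10\eta,t}}e^{\Phi_{10\eta}(x,t)}$ goes through the covering observation that, $E^A_{10\eta,t}$ being maximal separated in $A$, the balls $\{B_t(x,10\eta)\}_x$ cover $A$, hence $\beta<\mu(A)\le\sum_x\mu(B_t(x,10\eta))$, and one bounds each $\mu(B_t(x,10\eta))$ above in terms of $e^{\Phi_{10\eta}(x,t)-tP}$ by covering it with atoms of $\mathcal{A}_t$, the multiplicity of that covering being controlled by the entropy-expansiveness estimate \cite[Proposition~2.2]{Bow72}. The second scale enters precisely in replacing $\sup_C\psi_t$ by $\Phi_{10\eta}$ of a representative; the ``in particular'' statement is then immediate from $\Phi_{\eps/2}\ge\Phi_{10\eta}$ (note $10\eta<\eps/2$).

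The hard part will be to carry out this last step with an additive error only of size $O(h(t)+g(t))$, which along the sequence $\{t_k\}$ is sublogarithmic; this is exactly why no Shannon--McMillan--Breiman, Katok, or Brin--Katok input may be used to supply the entropy, and why the genuine errors have to be tracked through the discretization (the passage between $\mathcal{G}$ and $\mathcal{G}^1$ and between $t$ and $\lfloor t\rfloor$), the specification gap $h$, and the distortion $g$, so that they collapse into $C_\beta(h(t)+g(t))$. Making the covering/multiplicity estimate respect this bookkeeping, and threading the two-scale partition sums through it, is the delicate heart of the argument.
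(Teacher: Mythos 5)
Your overall instinct — feed the equilibrium identity $h_\mu(f_t,\mathcal{A}_t)=h_\mu(f_t)$ into a Jensen inequality at time $t$ and split off an $A$-contribution — is the right flavor, and your ergodic reduction and dismissal of bounded $t$ are fine. But the central mechanism you propose has a genuine gap, and it stems from your choice of partition.

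You work with $\mathcal{A}_t=\bigvee_{j}f^{-j}\mathcal{A}$ for a \emph{fixed} small partition $\mathcal{A}$ with $\mu(\partial\mathcal{A})=0$, and then try to convert the resulting atom-level lower bound into the required sum over $E^A_{10\eta,t}$ via a covering argument. That conversion needs a Gibbs-type upper bound $\mu(B_t(x,10\eta))\lesssim e^{\Phi_{10\eta}(x,t)-tP+O(h(t)+g(t))}$ on Bowen balls, which you do not establish and which is not a consequence of entropy-expansiveness, the equilibrium property, or Jensen. Jensen gives a bound on an \emph{aggregate} $\sum_C\mu(C)(-\log\mu(C)+\sup_C\psi_t)$ from below; it does not control any individual $\mu(C)$ or $\mu(B_t(x,10\eta))$ from above, and Bowen's entropy-expansive multiplicity estimate counts covers, it does not produce measure bounds. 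In the same place, you invoke the maximal ergodic theorem to ``keep a definite proportion of the weight on atoms meeting $A$''; that lemma concerns time-averages along orbits and plays no role in isolating the $A$-part of a one-step partition sum (in the paper it enters later, in the proof of uniqueness via generic-point sets, not here). You acknowledge that the conversion is the ``delicate heart'' you have not carried out; the route you sketch for that heart would not close.

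The missing idea is to build the partition directly from the separated set, so that no conversion is needed at all. Extend $E^A_{10\eta,t}=\{u_1,\dots,u_m\}$ to a maximal $(10\eta,t)$-separated set $\{u_1,\dots,u_l\}$ for $X$ and set $\omega_1=B_t(u_1,10\eta)$, $\omega_{k+1}=B_t(u_{k+1},10\eta)\setminus\bigcup_{i\le k}\omega_i$; then $\xi_t=\{\omega_i\}$ has $\mathrm{Diam}_t(\xi_t)<20\eta<\eps/2$ so Proposition~\ref{propexpansivegood}(2) and Abramov give $h_\mu(f_t,\xi_t)=th_\mu(f_1)$, and $u_i\in\omega_i\subset B_t(u_i,10\eta)$ gives $\sup_{\omega_i}\psi_t\le\Phi_{10\eta}(u_i,t)$ \emph{atom by atom}. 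One then runs Jensen on $\xi_t$, splits the sum into $i\le m$ and $i>m$ (incurring only $+\log 2$), uses $\mu(\omega_1^m)\ge\mu(A)>\beta$ for the first piece and the \emph{global} upper bound $\Lambda_t(X,10\eta,\eps/2)\le e^{C_0(h(t)+g(t))+tP}$ from Lemma~\ref{lemmaXpartitionsum} for the second piece, and rearranges. This keeps every estimate aggregate, never touches a Gibbs bound on individual balls or atoms, and produces exactly the $O(h(t)+g(t))$ error. Your plan would, in contrast, still leave the Gibbs-for-balls estimate to be proved, which is a separate and substantially harder task.
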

\begin{proof}
    The main idea of the proof parallels which of \cite[Lemma 4.18]{CT16}, yet some necessary adaptions have to be made on the construction of adapted partition. Let $E^A_{10\eta,t}$ be as in the statement. We write out its elements as $\{u_1,\cdots,u_m\}$, where $m=\#E^A_{10\eta,t}$. Expand $E^A_{10\eta,t}$ to a maximal $(10\eta,t)$-separated set for $X$ by adding finitely many points $\{u_{m+1},\cdots,u_l\}$. Notice that $X\subset \bigcup_{i=1}^l B_t(u_i,10\eta)$. The partition $\xi_t:=\{\omega_i\}_{i=1}^l$ is constructed in a way such that $\bigcup_{i=1}^k \omega_i\subset \bigcup_{i=1}^k B_t(u_i,10\eta)$ for all $k\in [1,l]\cap \mathbb{N}$ and $u_i\in \omega_i$, which is always possible by taking $\omega_1:=B_t(u_1,10\eta)$ and $\omega_{k+1}:=B_t(u_{k+1},10\eta)\setminus (\cup_{i=1}^k \omega_i)$.

    Since the diameter of elements in $\omega_t$ in $d_t$-metric is no greater than $20\eta$, which is by assumption less than $\frac{\eps}{2}$, we know from Proposition \ref{propexpansivegood} (2) and Abramov's formula that $h_{\mu}(f_t,\xi_t)=h_{\mu}(f_t)=th_{\mu}(f_1)$. Then by abbreviating $\cup_{i=i_1}^{i_2} \omega_i$ as $\omega_{i_1}^{i_2}$ and
    following the proof in \cite[Lemma 3.8, 4.18]{CT16}, we have
    $$
\begin{aligned}
    &tP=tP_{\mu}(\ph)\\
    =&h_{\mu}(f_t)+\int \Phi_0(x,t)d\mu \\
    \leq& \sum_i \mu(\omega_i)(-\log \mu(\omega_i)+\Phi_{10\eta}(u_i,t)) \\
    \leq & \mu(\omega_1^m)\log \sum_{i=1}^m e^{\Phi_{10\eta}(u_i,t)}
    +\mu(\omega_{m+1}^{l})\log \sum_{i=m+1}^l e^{\Phi_{10\eta}(u_i,t)}+\log 2 \\
    < &\mu(\omega_1^m)\log \sum_{i=1}^m e^{\Phi_{10\eta}(u_i,t)}+(1-\mu(\omega_1^m))\log \Lambda_t(X,10\eta,\frac{\eps}{2})+\log 2 \\
    \leq &\mu(\omega_1^m)\log \sum_{i=1}^m e^{\Phi_{10\eta}(u_i,t)}+(1-\mu(\omega_1^m))(C_0(h(t)+g(t))+tP)+\log 2,
\end{aligned}
    $$
    where the last inequality follows from Lemma \ref{lemmaXpartitionsum}. After rearrangement, we may continue from above as
    $$
\begin{aligned}
    \log \sum_{i=1}^m e^{\Phi_{10\eta}(u_i,t)}
    &>tP-\frac{(1-\mu(\omega_1^m))C_0(h(t)+g(t))}{\mu(\omega_1^m)}-\frac{\log 2}{\mu(\omega_1^m)} \\
    &>tP-\frac{(1-\beta)C_0(h(t)+g(t))}{\beta}-\frac{\log 2}{\beta}.
\end{aligned}
    $$
By choosing any $C_{\beta}>\max\{\frac{2(1-\beta)C_0}{\beta},\frac{\log 2}{\beta}\}$, we have 
$$\log \sum_{i=1}^m e^{\Phi_{10\eta}(u_i,t)}>tP-C_{\beta}(h(t)+g(t)),$$ 
which concludes the proof of the proposition.
\end{proof}

\begin{proof}[Proof of Theorem \ref{thm:weprove}]
    We are now ready to proceed to the proof of Theorem \ref{thm:weprove}. Assume by contradiction that there exist two ergodic equilibrium states $\mu$ and $\nu$ for $(X,F,\ph)$. Let $E_{\mu}$ be the set of $\mu$-generic points. Then $\mu(E_{\mu})=1$, $\nu(E_{\mu})=0$. Let $G_{\mu}\subset E_{\mu}$ be a compact set satisfying $\mu(G_{\mu})\in (\frac{9}{10},1)$, and $G_{\nu}\subset (E_{\mu})^c$ be a compact set satisfying $\nu(G_{\nu})\in (\frac{9}{10},1)$. Writing $B_{\mu}:=(G_{\mu})^c$ and $B_{\nu}:=(G_{\nu})^c$, we know $\max\{\mu(B_{\mu}),\nu(B_{\nu})\}\in (0,\frac{1}{10})$. Let 
    $$
\eta:=\min\{\frac{d(G_{\mu},G_{\nu})}{7},\frac{\eps}{100}\}.
    $$
    By applying \eqref{eqM-g} with $g:=\chi_{B_{\mu}}$, $\lambda=\frac{1}{5}$, and \eqref{eqM+g} with $g:=\chi_{B_{\nu}}$, $\lambda=\frac{1}{5}$, and writing 
    $$G^+_{\nu}:=\{x\in X:(M^+\chi_{B_{\nu}})(x)\leq \frac{1}{5}\},$$ 
    and
    $$G^-_{\mu}:=\{x\in X:(M^-\chi_{B_{\mu}})(x)\leq \frac{1}{5}\},$$ 
    we have
    $$
\nu(G^+_{\nu})>\frac{1}{2}, \quad \quad \quad \quad \mu(G^-_{\mu})>\frac{1}{2}.
    $$
    
    Let $k\in \mathbb{N}$ be sufficiently large, and $t_{\mu}^0=t_{\nu}^0=\frac{t_k}{3}$ where $t_k$ is from \eqref{seq}. Let $t_G:=4C_2(h(t_k)+g(t_k))$, and $T_G:=\lfloor \frac{t_k}{12t_G} \rfloor$. For each integer $l\in [0,T_G]$, let
$$
t_{\mu}^l:=t_{\mu}^0-lt_G, \quad t_{\nu}^l:=t_{\nu}^0+lt_G.
$$
Notice that $t_{\mu}^l\geq \frac{t_k}{4}$ for all $l$. By Proposition \ref{propkey}, there exists a constant $C_{\frac{1}{2}}>0$ such that
$$
\Lambda_{t_{\mu}^l}(f_{-t_{\mu}^l}(G_{\mu}^-),10\eta,\frac{\eps}{2})
>e^{-C_{\frac{1}{2}}(h(t_k)+g(t_k))+t_{\mu}^lP},
$$
and
$$
\Lambda_{t_{\nu}^l}(G_{\nu}^+,10\eta,\frac{\eps}{2})
>e^{-C_{\frac{1}{2}}(h(t_k)+g(t_k))+t_{\nu}^lP}.
$$
Then for each $l$, by Corollary \ref{coro:LambdaG1lowerbound}, there exist $i_{\mu}^l,j_{\mu}^l,i_{\nu}^l,j_{\nu}^l \in [0,C_2(h(t_k)+g(t_k))]$, a set $E_{\mu}^l$ being $(10\eta,t_{\mu}^l-i_{\mu}^l-j_{\mu}^l)$-separated for $(\mathcal{G}^1\cap f_{i_{\mu}^l,j_{\mu}^l}(f_{-t_{\mu}^l}(G_{\mu}^-),t_{\mu}^l))_{t_{\mu}^l-i_{\mu}^l-j_{\mu}^l}$, and respectively a set $E_{\nu}^l$ being $(10\eta,t_{\nu}^l-i_{\nu}^l-j_{\nu}^l)$-separated for $(\mathcal{G}^1\cap f_{i_{\nu}^l,j_{\nu}^l}(G_{\nu}^+,t_{\nu}^l))_{t_{\nu}^l-i_{\nu}^l-j_{\nu}^l}$ such that
    $$
\sum_{x_{\mu}^l\in E_{\mu}^l}e^{\Phi_{\frac{\eps}{2}}(x_{\mu}^l,t_{\mu}^l-i_{\mu}^l-j_{\mu}^l)} >\frac{C_3e^{(t_{\mu}^l-i_{\mu}^l-j_{\mu}^l)P-C_{\frac{1}{2}}(h(t_k)+g(t_k))}}{(h(t_k)+g(t_k))^2}
    $$
    and
    $$
\sum_{y_{\nu}^l\in E_{\nu}^l}e^{\Phi_{\frac{\eps}{2}}(y_{\nu}^l,t_{\nu}^l-i_{\nu}^l-j_{\nu}^l)}>\frac{C_3e^{(t_{\nu}^l-i_{\nu}^l-j_{\nu}^l)P-C_{\frac{1}{2}}(h(t_k)+g(t_k))}}{(h(t_k)+g(t_k))^2}.
    $$
    Write $r_{\mu}^l:=t_{\mu}^l-i_{\mu}^l-j_{\mu}^l$ and $r_{\nu}^l:=t_{\nu}^l-i_{\nu}^l-j_{\nu}^l$. For each $(x_{\mu}^l,y_{\nu}^l)\in E_{\mu}^l \times E_{\nu}^l$, there exist some $\tau^l=\tau^l((x_{\mu}^l,y_{\nu}^l))\in [0,h(t_k)]$ and $$z^l=z^l((x_{\mu}^l,y_{\nu}^l)):=\text{Spec}^{2,\eta}_{\tau^l}((x_{\mu}^l,r_{\mu}^l),(y_{\nu}^l,r_{\nu}^l))\in X.$$ 
    Writing $t(z^l):=r_{\mu}^l+\tau^l+r_{\nu}^l$, we have
    \begin{equation}
\begin{aligned}
&\sum_{x_{\mu}^l,y_{\nu}^l}e^{\Phi(z^l,t(z^l))}\\
>&\sum_{x_{\mu}^l,y_{\nu}^l}e^{\Phi_{\frac{\eps}{2}}(x_{\mu}^l,r_{\mu}^l)-h(t_k)||\ph||+\Phi_{\frac{\eps}{2}}(x_{\nu}^l,r_{\nu}^l)} \\
>&\frac{C_3^2e^{(r_{\mu}^l+r_{\nu}^l)P}e^{-2C_{\frac{1}{2}}(h(t_k)+g(t_k))}}{(h(t_k)+g(t_k))^4e^{h(t_k)||\ph||}} \\
\geq &\frac{C_3^2e^{(t_{\mu}^l+t_{\nu}^l)P}e^{-(2C_{\frac{1}{2}}+4C_2)(h(t_k)+g(t_k))}}{(h(t_k)+g(t_k))^4e^{h(t_k)||\ph||}} \\
=&\frac{C_3^2e^{\frac{2t_kP}{3}}e^{-(2C_{\frac{1}{2}}+4C_2)(h(t_k)+g(t_k))}}{(h(t_k)+g(t_k))^4e^{h(t_k)||\ph||}}.
\end{aligned}
    \end{equation}
    Adding over all possible $l\in [0,T_G]$ and writing $C':=48C_2/C_3^2$, we have
    $$
\sum_{l}\sum_{z^l}e^{\Phi(z,t(z^l))}>\frac{t_ke^{\frac{2t_kP}{3}}e^{-(2C_{\frac{1}{2}}+4C_2+||\ph||)(h(t_k)+g(t_k))}}{C'(h(t_k)+g(t_k))^5}.
    $$
    Notice that $$t(z^l)\in [\frac{2t_k}{3}-4C_2(h(t_k)+g(t_k)),\frac{2t_k}{3}+h(t_k)].$$ 
    Also recall $m=m(\eta)$ from the proof of Lemma \ref{lemmaGupperbound}. Divide 
    $$[\frac{2t_k}{3}-4C_2(h(t_k)+g(t_k)),\frac{2t_k}{3}+C_2(h(t_k)+g(t_k))]$$ 
    into $5mC_2(h(t_k)+g(t_k))$ sub-intervals with equal length $\frac{1}{m}$. Denote the collection of such intervals by $\{I_q\}_{q=1}^{5mC_2(h(t_k)+g(t_k))}$, and the collection of $z^l$ satisfying $t(z^l)\in I_q$ by $Z_q$. Therefore, there exists some integer 
    $$q_1\in [1,5mC_2(h(t_k)+g(t_k))]$$ 
    such that
\begin{equation} \label{eqZq1large}
    \sum_{z^l\in Z_{q_1}}e^{\Phi(z,t(z^l))}>\frac{t_ke^{\frac{2t_kP}{3}}e^{-(2C_{\frac{1}{2}}+4C_2+||\ph||)(h(t_k)+g(t_k))}}{5mC_2C'(h(t_k)+g(t_k))^6}.
\end{equation}
    We make the following key claim, which plays an essential role in providing us with the evidence to contradiction.

    \textbf{Claim:} For each $q\in [1,5mC_2(h(t_k)+g(t_k))]$, let $E_q$ be the collection of $$\{(x_{\mu}^l,y_{\nu}^l):x_{\mu}^l\in E_{\mu}^{l},y_{\nu}^l\in E_{\nu}^{l}\}$$ such that $$t(z^l)=t(z^l(x_{\mu}^l,y_{\nu}^l))\in I_q.$$ Then the map $z:E_q\to Z_q$ sending $(x_{\mu}^l,y_{\nu}^l)$ to $z^l(x_{\mu}^l,y_{\nu}^l)$ is injective. In fact, writing 
    $$s_{q}:=\frac{2t_k}{3}-4C_2(h(t_k)+g(t_k))+\frac{q}{m},$$ 
    $Z_q$ is $(5\eta,s_q)$-separated.

    We first explain how the above claim leads to a contradiction. Together with \eqref{eqZq1large}, the claim above implies that
    $$
\Lambda_{s_{q_1}}(X,5\eta,0)>\frac{s_{q_1}e^{-(2C_{\frac{1}{2}}+8C_2+||\ph||)(h(t_k)+g(t_k))}e^{s_{q_1}P}}{5mC_2C'(h(t_k)+g(t_k))^6}.
    $$
    By definition of $\{t_k\}$, when $k$ is large enough, we have $\frac{h(t_k)+g(t_k)}{\log t_k}$ is sufficiently close to $0$, in a sense that
    \begin{enumerate}
        \item $(2C_{\frac{1}{2}}+8C_2+||\ph||)(h(t_k)+g(t_k))<\frac{\log t_k}{100}$,
        \item $5mC_2C'(h(t_k)+g(t_k))^6<t_k^{\frac{1}{100}}$,
        \item $C_0(h(t_k)+g(t_k))<\frac{t_k}{100}$.
    \end{enumerate}
Then 
\begin{equation*}
\begin{aligned}
&\Lambda_{s_{q_1}}(X,5\eta,0)>t_k^{-\frac{1}{50}}s_{q_1}e^{s_{q_1}P}>(2s_{q_1})^{-\frac{1}{50}}s_{q_1}e^{s_{q_1}P}\\
>&e^{C_0(h(t_k)+g(t_k))+s_{q_1}P}\geq e^{C_0(h(s_{q_1})+g(s_{q_1}))+s_{q_1}P}, 
\end{aligned}   
\end{equation*}
contradicting Lemma \ref{lemmaXpartitionsum}.

It remains to show that the claim holds true. Given $(x_i,y_i)\in E_q\cap (E_{\mu}^{l_i}\times E_{\nu}^{l_i})$ with $i=1,2$ satisfying $z_1:=z(x_1,y_1)=z_2:=z(x_2,y_2)$, we show that $l_1=l_2$, $(x_1,y_1)=(x_2,y_2)$. We first look at the case where $l_1=l_2=l$. Notice that $x_1$ must be equal to $x_2$, as otherwise we know that $d_{r_{\mu}^l}(x_1,x_2)\geq 10\eta$, which implies that $d_{r_{\mu}^l}(z_1,z_2)\geq 8\eta$, contradicting $z_1=z_2$. In the mean time, if $y_1\neq y_2$, since both $t(z_1),t(z_2)$ are in $I_q$, writing $\tau_i:=\tau_{l}(x_i,y_i)$ with $i\in \{1,2\}$, $|\tau_1-\tau_2|\leq\frac{1}{m}$. Without loss of generality, we assume $\tau_1\geq \tau_2$. Then 
\begin{equation}
\begin{aligned}
&d_{r_{\nu}^l-\tau_1+\tau_2}(f_{r_{\mu}^l+\tau_1}(z_1),f_{r_{\mu}^l+\tau_1}(z_2))\geq d_{r_{\nu}^l-\tau_1+\tau_2}(y_1,f_{\tau_1-\tau_2}(y_2))-2\eta\\
\geq &d_{r_{\nu}^l-\tau_1+\tau_2}(y_1,y_2)-3\eta\geq 7\eta, 
\end{aligned}    
\end{equation}
reaching out to a contradiction.

Now we turn to the case where $l_1\neq l_2$. Assume without loss of generality that $l_1>l_2$. Since $y_1\in (f_{i_{\nu}^{l_1},j_{\nu}^{l_1}}(G_{\nu}^+,t_{\nu}^{l_1}))_{r_{\nu}^{l_1}}$, by $i_{\nu}^{l_1}\in [0,C_2(h(t_k)+g(t_k))]$ and the definition of $G_{\nu}^+$, for each $t'>C_2(h(t_k)+g(t_k))$, we have
$$
\frac{\int_0^{t'} \chi_{B_{\nu}}(f_{s}(y_1))ds}{t'}<\frac{2}{5},
$$
which implies that
$$
\frac{\int_0^{t'} \chi_{G_{\nu}}(f_{s}(y_1))ds}{t'}\geq \frac{3}{5}
$$
for all such $t'$. Since $d_{r_{\nu}^{l_1}}(y_1,f_{r_{\mu}^{l_1}+\tau_1}(z_1))\leq \eta$, we know
\begin{equation} \label{eqgoodnularge}
    \frac{\int_0^{t'} \chi_{B_{\eta}(G_{\nu})}(f_{s}(f_{r_{\mu}^{l_1}+\tau_1}(z_1)))ds}{t'}\geq \frac{3}{5}.
\end{equation}
Now we turn to the trajectory of $x_2$. Since $x_2\in (f_{i_{\mu}^{l_2},j_{\mu}^{l_2}}(f_{-t_{\mu}^{l_2}}(G_{\mu}^-),t_{\mu}^{l_2}))_{r_{\mu}^{l_2}}$, we know $f_{t_{\mu}^{l_2}-i_{\mu}^{l_2}}(x_2)=f_{r_{\mu}^{l_2}+j_{\mu}^{l_2}}(x_2)\in G_{\mu}^-$. As in the case for $y_1$, by $j_{\mu}^{l_2}\in [0,C_2(h(t_k)+g(t_k))]$ and the definition of $G_{\mu}^-$, for each $t'>C_2(h(t_k)+g(t_k))$, we have
$$
\frac{\int_0^{t'} \chi_{B_{\mu}}(f_{-s}(f_{r_{\mu}^{l_2}}(x_2)))ds}{t'}<\frac{2}{5},
$$
which implies that
$$
\frac{\int_0^{t'} \chi_{G_{\mu}}(f_{-s}(f_{r_{\mu}^{l_2}}(x_2)))ds}{t'}\geq \frac{3}{5},
$$
for all such $t'$. By $d_{r_{\mu}^{l_2}(x_2,z_2)}\leq \eta$, we have
\begin{equation} \label{eqgoodmularge}
    \frac{\int_0^{t'} \chi_{B_{\eta}(G_{\mu})}(f_{-s}(f_{r_{\mu}^{l_2}}(z_2)))ds}{t'}\geq \frac{3}{5}.
\end{equation}
We will combine \eqref{eqgoodnularge} and \eqref{eqgoodmularge} by concentrating on the time interval $I_{z_1,z_2}:=[r_{\mu}^{l_1}+\tau_1,r_{\mu}^{l_2}]$. The length of $I_{z_1,z_2}$ equals 
$$L_{z_1,z_2}=r_{\mu}^{l_2}-r_{\mu}^{l_1}-\tau_1>C_2(h(t_k)+g(t_k)).$$ 
Therefore, \eqref{eqgoodnularge} and \eqref{eqgoodmularge} respectively imply that
$$
\int_{r_{\mu}^{l_1}+\tau_1}^{r_{\mu}^{l_2}}\chi_{B_{\eta}(G_{\nu})}(f_s(z_1))ds\geq \frac{3L_{z_1,z_2}}{5},
$$
and
$$
\int_{r_{\mu}^{l_1}+\tau_1}^{r_{\mu}^{l_2}}\chi_{B_{\eta}(G_{\mu})}(f_s(z_2))ds\geq \frac{3L_{z_1,z_2}}{5}.
$$
Since $d(B_{\eta}(G_{\nu}),B_{\eta}(G_{\mu}))\geq 5\eta$ by our choice on $\eta$, we must have
$$
d_{L_{z_1,z_2}}(f_{r_{\mu}^{l_1}+\tau_1}(z_1),f_{r_{\mu}^{l_1}+\tau_1}(z_2))\geq 5\eta,
$$
which concludes the claim.
\end{proof}

\section{Applications}

\subsection{Suspension flow} 
\subsubsection{Preliminaries}
We start with a brief review on the notion of suspension flow. Let $f:X\to X$ be a homeomorphism of a  compact metric space $(X,d)$ and $r\in C(X,\mathbb{R}^+)$. Consider the space
$$
Y=\{(x,s)\in X\times \mathbb{R}:0\leq s\leq r(x)\},
$$
and let $X_r$ be the set obtained from $Y$ by identifying $(x,r(x))$ with $(f(x),0)$ for each $x\in X$. Such $r$ is often called a \emph{roof function}. 
\begin{definition}
    The suspension flow over $(X,f)$ with roof function $r$ is the flow $F=\{f_t\}_{t\in \mathbb{R}}$ with $f_t:X_{r}\to X_r$ satisfying
    $$
f_t(x,s)=(x,s+t).
    $$
\end{definition}
We often extend $r$ to a function on $X_r$ by letting $r(x'):=\min\{t>0:f_t(x')\in X\times \{0\}\}$. To formulate the metric on $X_r$, we first introduce so-called \emph{Bowen-Walters distance} on constant-time suspension with $r=1$, denoted by $d_{X_1}$. Given $x,y\in X$ and $t\in [0,1]$, for the \emph{horizontal segment} $[(x,t),(y,t)]$ in $X_1$, let the corresponding length be
$$
d_{1,h}((x,t),(y,t)):=(1-t)d(x,y)+td(f(x),f(y)).
$$
Meanwhile, for $(x,t),(x',t')$ being in the same $F$-orbit, let the length of the corresponding \emph{vertical segment} $[(x,t),(x',t')]$ be
$$
d_{1,v}((x,t),(x',t'))=\inf\{|s|:f_{s}(x,t)=(x',t')\}.
$$
Then for any $(x,t),(y,s)\in X_1$, their \emph{Bowen-Walters distance} $d_{BW}((x,t),(y,s))$ is defined as the infimum of lengths of all paths connecting $(x,t),(y,s)$ that are composed of horizontal and vertical segments. For general $r$, given any $(x,t),(y,s)\in X_r$, the metric $d_{X_r}$ on $X_r$ is given by
$$
d_{X_r}((x,t),(y,s)):=d_{BW}((x,t/r(x)),(y,s/r(y))).
$$
For $f$ bi-Lipschitz continuous and $r$ Lipschitz continuous, it is well-known that $d_{X_r}$ can be measured in another manner; see \cite[Proposition 2.1]{Ba13}. Throughout this section, we \emph{only require $f$ to be Lipschitz}; that is, we do not need $f^{-1}$ or $r$ to be Lipschitz. Given any $(x,t),(y,s)\in X_r$, let 
$$
d_{X_r}^+((x,t),(y,s)):=\min\left\{
\begin{array}{cc}
     &  d(x,y)+|t/r(x)-s/r(y)| \\
     &  d(f(x),y)+1-t/r(x)+s/r(y)  \\
     &  d(x,f(y))+1-s/r(y)+t/r(x)
\end{array}
\right\},
$$
and
$$
d_{X_r}^-((x,t),(y,s)):=\min\left\{
\begin{array}{cc}
     &  d(f(x),f(y))+|t/r(x)-s/r(y)| \\
     &  d(f^2(x),f(y))+1-t/r(x)+s/r(y)  \\
     &  d(f(x),f^2(y))+1-s/r(y)+t/r(x)
\end{array}
\right\},
$$
By running the arguments in the proof for \cite[Proposition 2.1]{Ba13}, yet not using Lipschitz property of $r$ in expanding $|t/r(x)-s/r(y)|$, nor Lipschitz property of $f^{-1}$ in justifying the lower bound for $d_{X_r}$, we have the following result relating $d_{X_r}$ and $d_{X_r}^+,d_{X_r}^-$. Details are left to interested readers.
\begin{lemma} \label{lem:distancecomparison}
    If $f$ is Lipschitz, then there exists a constant $c=c(f,r)$ such that
\begin{equation}
    c^{-1}d_{X_r}^-(p,q)\leq d_{X_r}(p,q)\leq c d_{X_r}^+(p,q) \text{ for all } p,q\in X_r
\end{equation}
\end{lemma}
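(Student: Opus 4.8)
The plan is to reduce everything to the constant-roof suspension $X_1$ and then prove the two inequalities separately by the usual ``build a path / bound an arbitrary path'' dichotomy, in the spirit of \cite[Proposition 2.1]{Ba13}, but arranging the argument so that neither the Lipschitz continuity of $f^{-1}$ nor that of $r$ is invoked. Since by definition $d_{X_r}((x,t),(y,s))=d_{BW}((x,t/r(x)),(y,s/r(y)))$, I would throughout write $\bar p=(x,a)$, $\bar q=(y,b)$ with $a=t/r(x)\in[0,1]$, $b=s/r(y)\in[0,1]$, and work with $d_{BW}$ on $X_1$. The one computation to record up front is that a horizontal segment at height $\theta$ joining $(u,\theta)$ to $(v,\theta)$ has length $(1-\theta)d(u,v)+\theta\,d(f(u),f(v))$, hence is at most $L\,d(u,v)$ and at least $L^{-1}d(f(u),f(v))$, where $L:=\max\{1,\operatorname{Lip}(f)\}$ (the upper bound uses $\operatorname{Lip}(f)\le L$, the lower bound uses $d(u,v)\ge L^{-1}d(f(u),f(v))$), while a vertical segment has length equal to the elapsed flow time.

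For the upper bound I would simply exhibit, for each of the three terms in $d_{X_r}^+(p,q)$, an admissible path made of one horizontal and one or two vertical segments: for the first term, flow vertically from $(x,a)$ to $(x,b)$ and then move horizontally to $(y,b)$; for the second, flow vertically up through the seam $(x,1)\sim(f(x),0)$ and on to $(f(x),b)$, then horizontally to $(y,b)$; for the third, the symmetric path through $(y,1)\sim(f(y),0)$. Bounding the horizontal piece in each case by $L\,d(\cdot,\cdot)$ gives $d_{BW}(\bar p,\bar q)\le L\,d_{X_r}^+(p,q)$.

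The substantive half is the lower bound $d_{X_r}(p,q)\ge c^{-1}d_{X_r}^-(p,q)$, and this is where I expect the real work to be. Here I would take an arbitrary admissible path $\gamma$ from $\bar p$ to $\bar q$, normalised to finitely many segments with no two consecutive of the same type and no immediately backtracking vertical segment, and analyse it according to the net number $w\in\mathbb Z$ of seam crossings (upward minus downward). When $w=0$ the horizontal pieces project to a chain $x=u_0,\dots,u_N=y$ in $X$, so the horizontal length is at least $L^{-1}\sum_i d(f(u_{i-1}),f(u_i))\ge L^{-1}d(f(x),f(y))$ by the triangle inequality, while the vertical length is at least $|a-b|$, matching the first term of $d_{X_r}^-$ up to the factor $L$; when $w=\pm1$ the same telescoping, now carrying the extra $f$ produced by the seam crossing together with the push-forward $f$ coming from the horizontal lower bound, yields the second or third term (with $f^2$ appearing on the appropriate side), after an elementary estimate of the form $d(f^2(x),f(y))\le L\big(d(f(x),f(x'))+d(f^2(x'),f(y))\big)$; and when $|w|\ge 2$ the net vertical traversal alone already forces $\operatorname{length}(\gamma)\ge|w|\ge 2$, which dominates $d_{X_r}^-(p,q)\le\Diam(X,d)+2$ once $c$ is chosen large. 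The one delicate point — the main obstacle — is controlling back-and-forth excursions across the seam, which leave $w$ unchanged but let $\gamma$ do horizontal work in the adjacent sheet: here one checks that such an excursion of vertical depth $\delta$ costs at least $2\delta$ vertically while the horizontal chain it contributes, once the extra $f$ is accounted for, still telescopes consistently, so the bounds above survive. Setting $c:=(\Diam(X,d)+2)L^2$ then finishes the proof. I would also stress that the two hypotheses dropped relative to \cite{Ba13} are absorbed precisely as follows: $\operatorname{Lip}(f^{-1})$ is replaced by the elementary lower bound $d(u,v)\ge L^{-1}d(f(u),f(v))$ applied to horizontal segments, and $\operatorname{Lip}(r)$ is not needed because $d_{X_r}^{\pm}$ have been written in the normalised fibre coordinate $t/r(x)$, which is literally the vertical coordinate of $X_1$.
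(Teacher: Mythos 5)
Your overall plan is sound and matches what the paper points to (running Barreira's argument in the normalized height $t/r(x)$, and replacing the use of a Lipschitz constant for $f^{-1}$ by the elementary bound $d(u,v)\ge L^{-1}d(f(u),f(v))$). The upper bound is correct as written, as is the $w=0$, no-excursion case of the lower bound. The gap is exactly at the step you flag as ``the one delicate point,'' and the resolution you propose there does not work. A horizontal segment in the adjacent upper sheet, at lifted height $1+\theta'$ with $\theta'\in(0,1)$, joining $(u,\cdot)$ to $(v,\cdot)$, has Bowen--Walters length $(1-\theta')\,d(f(u),f(v))+\theta'\,d(f^2(u),f^2(v))$. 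Your uniform lower bound turns this into $\ge L^{-1}d(f^2(u),f^2(v))$ --- one $f$ too many --- and there is no inequality $d(f^2(u),f^2(v))\ge \mathrm{const}\cdot d(f(u),f(v))$ once $f^{-1}$ is not Lipschitz. So the assertion that the chain ``once the extra $f$ is accounted for, still telescopes consistently'' is false as stated: when the excursion depth $\theta'$ is close to $1$ the lower bound you obtain involves $d(f^2(x),f^2(y))$, which can be arbitrarily small compared with $d(f(x),f(y))$, and the vertical cost $2\theta'<2$ does not by itself absorb the deficit.

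The fix is a case split you have not supplied. For $w=0$, let $\delta_\pm$ be the maximal excursion depths above height $1$ and below height $0$. If $\delta_+,\delta_-\le 1/2$, then a horizontal segment at height $1+\theta'$ has $\theta'\le 1/2$, so its length is $\ge(1-\theta')\,d(f(u),f(v))\ge\tfrac12 d(f(u),f(v))$, and a segment at height $-1+\theta''$ has $\theta''\ge 1/2$ and length $\ge\theta''\,d(u,v)\ge\tfrac1{2L}d(f(u),f(v))$; with the uniform factor $(2L)^{-1}$ the chain telescopes and, combined with the vertical bound $\ge|a-b|$, gives the first term of $d_{X_r}^-$ with $c=2L$. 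If instead $\delta_+>1/2$ or $\delta_->1/2$, the total variation of the height alone is $\ge 1$, which dominates $d_{X_r}^-\le\Diam(X)+1$ once $c\ge\Diam(X)+1$. The $w=\pm1$ case needs a parallel split (now telescoping against $d(f^2(\cdot),f^2(\cdot))$, producing the factor $L^{-2}$ that is consistent with the $L^2$ in your proposed constant), and $|w|\ge 2$ is controlled by vertical length as you say. With this case analysis inserted your argument closes and a constant such as $\max\{L^2,\Diam(X)+2\}$ (hence also $(\Diam(X)+2)L^2$) works; without it, the crucial step is a claim rather than a proof.
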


We will consider a suspension flow induced by $(X,f,r)$, with the base dynamical system $(X,f)$ satisfying expansiveness, weak controlled specification property at all scales with gap function $h$ having slow growth rate. As in the general study of suspension flow, the roof function $r$ is always expected to satisfy some regularity requirement, which in our case is stated as follows. Given any $\delta>0$, $n,m\in \mathbb{N}^+$, write
$$
\text{Var}_m^{\pm}(r,n,\delta):=\sup_{x\in X,y\in B^f_{[-m+1,n+m-1]}(x,\delta)}|S_nr(x)-S_nr(y)|
$$
where $B^f_{[-n_1,n_2]}(x,\delta):=\{y\in X: d(f^i(x),f^i(y))<\delta,\ \forall i\in [-n_1,n_2]\cap \mathbb{Z}\}$ for all $n_1,n_2\in \mathbb{N}$.
\begin{definition} \label{DefweakWalters}
Given any $\delta>0$, we say the roof function $r\in C(X,\mathbb{R}^+)$ satisfies \emph{weak Walters condition at scale} $\delta$ if there exists a monotonically increasing function $k:\mathbb{N}^+\to \mathbb{N}^+$ satisfying $\lim_{n\to \infty}\frac{k(n)}{\log n}=0$, such that
$$
 \lim_{n\to \infty}\text{Var}_{k(n)}^{\pm}(r,n,\delta)=0.
$$
We also say $r$ satisfies \emph{weak Walters condition} if it does at some scale. 
\end{definition}
\begin{remark}
    Recall that $r$ is said to satisfy \emph{Walters condition} at scale $\delta$ if 
    $$\lim_{m\to \infty}\sup_{n\in \mathbb{N}^+}\{\text{Var}^{\pm}_{m}(r,n,\delta)\}=0.
    $$ 
    It is not hard to observe that Walters condition implies weak Walters condition (at the same scale). It is also clear that if $r$ satisfies weak Walters condition at scale $\delta$, then it does at all smaller scales.
\end{remark}

\subsubsection{Proof of Theorem \ref{thm:suspension}} This section is devoted to the proof of Theorem \ref{thm:suspension}. The main strategy lies in lifting expansiveness and specification property from underlying discrete system $(X,f)$ to the continuous system $(X_r,F)$. Indeed, by Proposition \ref{expan} below, the flow $(X_r,F)$ is entropy-expansive at any small scale. Meanwhile, Proposition \ref{lem:suspensionspecification} indicates that $(X_r,F)$ satisfies weak controlled specification at any scale $\eta$ with a gap function $h_{r,\eta}$ satisfying $\liminf_{t\to \infty}\frac{h_{r,\eta}(t)}{\log t}=0$. Theorem \ref{thm:suspension} is therefore concluded by an immediate application of Theorem \ref{thm:mme}.

By a uniform scaling, from now on, we assume that $r(X)\subset [1,L]$ for some constant $L\geq 1$. Let us start with expansiveness by first choosing a few constants. Fix a constant $\eps$ at which both expansiveness for $(X,f)$ and weak Walters condition for $r$ hold. Fix $\delta_0\ll \eps$, and choose $\eta_0,\eps_0\in (0,\delta_0)$ iteratively such that
$$
d(x,y)<\eta_0\implies |r(x)-r(y)|<\delta_0 
$$
and
$$
d(x,y)<\eps_0 \implies \max\{d(f^{-1}(x),f^{-1}(y)),d(f^{-2}(x),f^{-2}(y))\}<\eta_0.
$$
We also write $\eps_0':=\eps_0/cL$, where $c=c(f,r)$ is as in Lemma \ref{lem:distancecomparison}. For any $p\in X_r$ and $\eta\in (0,\eps)$, recall that
$$
\Gamma_{\eta}^F(p):=\{q\in X_r:d_{X_r}(f_t(p),f_t(q))<\eta \text{ for all }t\in \mathbb{R}\}.
$$
\begin{lemma} \label{lem:non-expansiveset}
    For any $\eps'\in (0,\eps_0')$ and $p\in X_r$, we have $\Gamma_{\eps'}^F(p)\subset f_{[-\delta_0,\delta_0]}(p)$. 
\end{lemma}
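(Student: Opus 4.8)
The goal is to show that if $q\in\Gamma_{\eps'}^F(p)$ with $\eps'<\eps_0'$, then $q$ lies on the $F$-orbit segment $f_{[-\delta_0,\delta_0]}(p)$. The plan is to transfer the two-sided Bowen closeness of $p$ and $q$ under the flow into a two-sided Bowen closeness of their base projections under $f$, and then invoke expansiveness of $(X,f)$ to conclude they project to the \emph{same} base point; the flow direction then accounts for the remaining freedom, which the roof-function bounds pin down to size $\delta_0$.

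First I would set up coordinates: write $p=(x,a)$ and $q=(y,b)$ with $x,y\in X$, $0\le a\le r(x)$, $0\le b\le r(y)$. For each integer $n\in\ZZ$, consider the time $t_n$ at which $f_{t_n}(p)$ returns to the base $X\times\{0\}$ for the $n$-th time; at that moment $f_{t_n}(p)=(f^n(x),0)$. By hypothesis $d_{X_r}(f_{t_n}(p),f_{t_n}(q))<\eps'$, and via Lemma~\ref{lem:distancecomparison} this gives $d_{X_r}^-(f_{t_n}(p),f_{t_n}(q))<c\eps'$. Examining the three terms in the definition of $d_{X_r}^-$ — and using that the first coordinate of $f_{t_n}(p)$ is exactly $f^n(x)$ with vertical coordinate $0$, so the ``wrap-around'' options force a base displacement by one iterate of $f$ — I would deduce that $f_{t_n}(q)$ has base point within distance $O(c\eps')$ of $f^n(x)$ after at most one application of $f^{\pm1}$, hence (using the choices of $\eta_0,\eps_0$ relating $d(x,y)<\eps_0$ to closeness of $f^{-1},f^{-2}$ images, and $d(x,y)<\eta_0$ to closeness of $r$-values, together with $\eps'<\eps_0'=\eps_0/cL$) that in fact $d(f^n(x),f^n(y))<\eps$ for \emph{all} $n\in\ZZ$, where $y$ is the base point of $q$ — possibly after correcting the labeling of $q$ by shifting it one fundamental domain up or down, which does not change its $F$-orbit. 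The factor $L$ enters because the return-time spacing is controlled by $r(X)\subset[1,L]$, so one $\eps'$-sized displacement in the $X_r$-metric corresponds to at most an $L\eps'$-sized displacement once renormalized to the base.

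Having shown $d(f^n(x),f^n(y))<\eps$ for all $n\in\ZZ$, expansiveness of $(X,f)$ at scale $\eps$ forces $x=y$. Then $p=(x,a)$ and $q=(x,b)$ lie on the same $F$-orbit, so $q=f_{b-a}(p)$ (reading the vertical coordinate within the fundamental domain over $x$). It remains to bound $|b-a|$. Since $d_{X_r}(p,q)<\eps'<\eps_0'<\delta_0$, the vertical-segment interpretation of the Bowen–Walters metric — bounded below by the normalized vertical displacement — gives $|a-b|/r(x)$ small, and since $r(x)\le L$ we get $|a-b|\le\delta_0$ after unwinding the normalization. (If the minimizing path in the Bowen–Walters metric wraps through the identification, one instead gets that $q$ is within $\delta_0$ of $p$ going the other way around, still placing $q\in f_{[-\delta_0,\delta_0]}(p)$.) This yields $\Gamma_{\eps'}^F(p)\subset f_{[-\delta_0,\delta_0]}(p)$.

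\textbf{Main obstacle.} The delicate point is the bookkeeping in the second step: the metric $d_{X_r}^-$ is a minimum of three expressions, and at return times to the base one must argue that the correct branch is selected (or harmlessly switch to an adjacent fundamental domain for $q$), so that the estimate genuinely propagates to all $n\in\ZZ$ rather than degrading as $|n|$ grows. Getting the constants to chain correctly — $\eps'\mapsto c\eps'$ via Lemma~\ref{lem:distancecomparison}, then $\mapsto L\eps'$-type displacement on the base, then feeding into the $\eps_0,\eta_0$ definitions, and ensuring the net bound stays below $\eps$ uniformly in $n$ — is where the care is needed; once the uniform base estimate is in hand, expansiveness does the rest cleanly.
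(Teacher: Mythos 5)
Your plan is essentially the paper's argument: use Lemma~\ref{lem:distancecomparison} to translate the flow-level bound $d_{X_r}(f_t p, f_t q)<\eps'$ into base-level estimates $d(f^n x, f^n y)<\eps$ for all $n\in\ZZ$ via an induction that advances one fundamental domain at a time, then invoke expansiveness of $(X,f)$ to conclude $x=y$, and finally read off the flow-direction offset from the vertical-displacement bound. The one place where the paper is noticeably cleaner is the normalization: it first replaces $p$ by $f_t(p)$ for suitable $|t|\leq L$ so that $p=(x,1/3)$, i.e.\ the height $1/3r(x)$ is pinned into $[1/3L,1/3]$ and hence bounded away from both $0$ and $1$ by more than $c\eps'$; this forces the \emph{first} branch of $d_{X_r}^{\pm}$ to be the minimizer at every step, so the wrap-around cases you describe never arise and no relabeling of $q$ by $f^{\pm1}$ is needed. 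Working at return times to the base (height $0$), as you propose, is workable but exactly invites the branch-selection ambiguity you flag as the main obstacle; you would also need to carry an induction showing the base point of $f_{t_n}(q)$ stays one iterate from $f^n(x)$ uniformly in $n$, rather than just observing it at each $n$ in isolation, since a priori $q$'s return times drift relative to $p$'s. The final vertical-offset step and the wrap-around remark at the end are both fine.
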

\begin{proof}
    Fix any $\eps'\in (0,\eps_0')$. By flowing in the flow direction for at most $L$ time, we may assume $p=(x,1/3)$. For any $q=(y,s)\in\Gamma_{\eps'}^F(p)$, since $\min\{1-1/3r(x),1/3r(x)\}\geq 1/3L>\eps'$, by applying Lemma \ref{lem:distancecomparison}, we have
\begin{equation} \label{eqdistancepqcontrol}
    d(f(x),f(y))+|1/3r(x)-s/r(y)|<c\eps'<\eps_0,
\end{equation}
    which implies that $\max\{d(x,y),d(f^{-1}(x),f^{-1}(y))\}<\eta_0$, and therefore
\begin{equation} \label{eqdistancercontrol}
    \max\{|r(x)-r(y)|,|r(f^{-1}(x))-r(f^{-1}(y))|\}<\delta_0.
\end{equation}
It then follows from \eqref{eqdistancepqcontrol}, \eqref{eqdistancercontrol} and our choice on $\eps_0'$ that
$$
s\in (1/3-4\delta_0/3,1/3+4\delta_0/3),
$$
which together with \eqref{eqdistancercontrol} implies  
$$
r(x)+s\in (r(y),r(y)+1) \quad \text{ and } \quad s-r(f^{-1}(x))\in (-r(f^{-1}(y)),-r(f^{-1}(y))+1).
$$
Therefore, we know
$$
f_{r(x)}(q)=(f(y),r(x)+s-r(y)), \quad f_{-r(f^{-1}(x))}(q)=(f^{-1}(y),s+r(f^{-1}(y))-r(f^{-1}(x))).
$$
Writing $s_1:=r(x)+s-r(y)$, $s_{-1}:=s+r(f^{-1}(y))-r(f^{-1}(x))$. Observe that $f_{r(x)}(p)=(f(x),1/3)$, $f_{r(f^{-1}(x))}(p)=(f^{-1}(x),1/3)$. Then $d_{X_r}((f(x),1/3),(f(y),s_1))<\eps'$, which by Lemma \ref{lem:distancecomparison} implies that $d(f^2(x),f^2(y))+|1/3r(f(x))-s_1/r(f(y))|<c\eps'$, as an analog of \eqref{eqdistancepqcontrol}. By repeating the argument from above iteratively in the forward direction, we have
$$
d(f^i(x),f^i(y))<\eps_0 \quad \text{ for all }i\in \mathbb{N}^+.
$$
Similarly, a symmetric argument initiating from $d_{X_r}((f^{-1}(x),1/3),(f^{-1}(y),s_{-1}))<\eps'$ in the backward direction implies that
$$
d(f^{-i}(x),f^{-i}(y))<\eps' \quad \text{ for all }i\in \mathbb{N}.
$$
Consequently, we know $y\in \Gamma_{\eps_0}^f(x)=\{x\}$, which together with \eqref{eqdistancercontrol} implies that
$q=(y,s)\in f_{[-\delta_0,\delta_0]}(p)$, thus concludes the lemma.
\end{proof}
As an immediate consequence of the lemma above, we have
\begin{proposition}\label{expan}
    The flow $(X_r,F)$ is entropy-expansive at any scale less than $\eps_0'$.
\end{proposition}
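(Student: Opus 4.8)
The goal is to show that for every $p \in X_r$, the bi-infinite Bowen set $\Gamma_{\eps'}^F(p)$ has zero topological entropy whenever $\eps' < \eps_0'$. By Lemma \ref{lem:non-expansiveset}, we already know the far stronger fact that $\Gamma_{\eps'}^F(p) \subset f_{[-\delta_0,\delta_0]}(p)$, i.e. the bi-infinite Bowen set at such a scale is contained in a single local flow orbit segment. So the plan is simply to observe that a subset of a finite-length flow orbit segment carries no entropy. Concretely, I would argue as follows.

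First, fix $p \in X_r$ and $\eps' \in (0,\eps_0')$, and set $A := \Gamma_{\eps'}^F(p)$, which by Lemma \ref{lem:non-expansiveset} satisfies $A \subset \{f_s(p) : s \in [-\delta_0,\delta_0]\}$. Since $A$ is a subset of the image of the compact interval $[-\delta_0,\delta_0]$ under the continuous map $s \mapsto f_s(p)$, it is contained in a set that is, up to the continuous (hence uniformly continuous, since $X_r$ is compact) reparametrization by the flow, an arc. The point is that such a set has topological entropy zero: for any $\gamma > 0$ and any $t > 0$, a $(t,\gamma)$-spanning set for $A$ can be obtained by taking finitely many points $f_{s_1}(p), \dots, f_{s_N}(p)$ with $s_1, \dots, s_N$ a $\gamma'$-net of $[-\delta_0,\delta_0]$, where $\gamma'$ is chosen (using uniform continuity of the flow on the compact set $X_r \times [-\delta_0,\delta_0]$) so that $|s - s'| < \gamma'$ forces $d_{X_r}(f_{s+u}(p), f_{s'+u}(p)) < \gamma$ for all $u$; crucially $N$ can be taken independent of $t$. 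Hence the minimal cardinality of a $(t,\gamma)$-spanning set for $A$ is bounded uniformly in $t$, so $h(A) = \lim_{\gamma \to 0} \limsup_{t\to\infty} \frac{1}{t}\log r_t(A,\gamma) = 0$.

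Therefore $h(\Gamma_{\eps'}^F(p)) = 0$ for every $p \in X_r$, which by definition means $(X_r,F)$ is entropy expansive at scale $\eps'$, and this holds for every $\eps' < \eps_0'$, concluding the proof.

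There is essentially no obstacle here: all the work was done in Lemma \ref{lem:non-expansiveset}, and the remaining content is the standard and elementary fact that a finite-length orbit arc of a continuous flow has zero topological entropy (it is not even expansive-dependent — it is just uniform continuity plus compactness). The only minor care needed is to make the reparametrization argument clean, i.e. to note that flowing $p$ for at most $L$ units of time to normalize its second coordinate (as in the proof of Lemma \ref{lem:non-expansiveset}) does not affect the entropy of the bi-infinite Bowen set, since $\Gamma_{\eps'}^F$ transforms equivariantly under the flow and entropy is a flow-invariant.
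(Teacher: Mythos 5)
Your proposal is correct and takes the same route as the paper, which states Proposition \ref{expan} as an immediate consequence of Lemma \ref{lem:non-expansiveset}; you have merely filled in the elementary step that a compact flow-orbit arc has zero topological entropy (via a uniform-in-$t$ bound on the size of $(t,\gamma)$-spanning sets). The only tiny imprecision is in the uniform-continuity justification: what you really want is $\sup_{x\in X_r,\,|\tau|\le\gamma'} d_{X_r}(f_\tau(x),x)<\gamma$ (noting $f_{s+u}(p)=f_{s-s'}(f_{s'+u}(p))$), but this is cosmetic and the argument is sound; also the final remark about reparametrization is unnecessary since Lemma \ref{lem:non-expansiveset} already covers all $p\in X_r$.
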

 
The establishment of our desired specification property for $(X_r,F)$ relies on a combination of specification for $(X,f)$ and regularity of $r$. Let $k$ be the function in the definition of weak Walters condition for $r$ at scale $\eps$, which satisfies $\lim_{n\to \infty}\frac{k(n)}{\log n}=0$. 
\begin{lemma} \label{lem:partialsumrvariation}
    For any $\eta>0$, there exists $N=N(\eta)\in \mathbb{N}$ such that for any $x\in X$, $n\in \mathbb{N}^+$, $y\in B_{[-N-k(n)+1,n+N+k(n)-1]}^f(x,\eps)$, we have
    $$
|S_nr(x)-S_nr(y)|<\eta.
    $$
\end{lemma}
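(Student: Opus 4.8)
The plan is to split the range of the index $n$ into two regimes separated by a threshold $n_0=n_0(\eta)$ coming from the weak Walters condition, and to treat each regime by a different mechanism. First I would record the elementary monotonicity: for fixed $x$ and $n$ the set $B^f_{[-m+1,n+m-1]}(x,\eps)$ shrinks as $m$ grows, so $m\mapsto\text{Var}^{\pm}_m(r,n,\eps)$ is non-increasing; moreover the ball in the statement, $B^f_{[-N-k(n)+1,\,n+N+k(n)-1]}(x,\eps)$, is exactly the $\text{Var}$-ball $B^f_{[-m+1,n+m-1]}(x,\eps)$ with $m=N+k(n)$. Since $\lim_{n\to\infty}\text{Var}^{\pm}_{k(n)}(r,n,\eps)=0$, I would pick $n_0$ with $\text{Var}^{\pm}_{k(n)}(r,n,\eps)<\eta$ for all $n\ge n_0$; then for every $n\ge n_0$, every $x$, and every $N\ge 1$, any $y$ in the ball of the statement already lies in $B^f_{[-k(n)+1,\,n+k(n)-1]}(x,\eps)$, whence $|S_nr(x)-S_nr(y)|\le\text{Var}^{\pm}_{k(n)}(r,n,\eps)<\eta$. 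So the large-$n$ regime places no restriction on $N$.

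For the finitely many values $1\le n<n_0$ the weak Walters estimate is vacuous, so I would instead use expansiveness of $(X,f)$ at the scale $\eps$ together with uniform continuity of $r$. The auxiliary fact I need, which follows from a standard compactness argument using that $\eps$ is an expansive constant, is: for each $\rho>0$ there is $M=M(\rho)\in\mathbb{N}$ such that $d(f^i(u),f^i(v))\le\eps$ for all integers $i$ with $|i|\le M$ implies $d(u,v)<\rho$; translating along the orbit, $d(f^i(u),f^i(v))\le\eps$ for all $i\in[j-M,j+M]$ then forces $d(f^j(u),f^j(v))<\rho$. Given $\eta$, I would choose $\rho>0$ with $d(a,b)<\rho\Rightarrow|r(a)-r(b)|<\eta/n_0$, put $M=M(\rho)$, and set $N:=\max\{M,1\}$. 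Then for $1\le n<n_0$ and $y\in B^f_{[-N-k(n)+1,\,n+N+k(n)-1]}(x,\eps)$, each window $[j-M,j+M]$ with $0\le j\le n-1$ sits inside $[-N-k(n)+1,\,n+N+k(n)-1]$ (using $k(n)\ge 1$), so $d(f^j(x),f^j(y))<\rho$, hence $|r(f^j(x))-r(f^j(y))|<\eta/n_0$, and summing over $0\le j\le n-1$ gives $|S_nr(x)-S_nr(y)|<n\eta/n_0<\eta$.

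Putting the two regimes together, $N:=\max\{M,1\}$ (depending only on $\eta$) works for all $n\in\mathbb{N}^+$, which is the claim. Everything except the compactness/expansiveness fact in the second paragraph is bookkeeping; I expect the small-$n$ case to be the delicate part, since there the asymptotic regularity of $r$ is useless and one genuinely has to invoke expansiveness of $(X,f)$, and one must also check that the single constant $N$ is uniform over all $n$ and all $x$.
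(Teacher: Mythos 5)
Your proof is correct and follows essentially the same strategy as the paper's: split at a threshold (your $n_0$, the paper's $N_1$), handle large $n$ directly from the weak Walters condition using the monotonicity of $m\mapsto\text{Var}^{\pm}_m(r,n,\eps)$, and handle small $n$ by combining expansiveness of $(X,f)$ at scale $\eps$ (the compactness fact giving $M(\rho)$) with uniform continuity of $r$. The only cosmetic difference is that the paper states the expansiveness step as choosing $N$ with $B^f_{[-N+1,N-1]}(x_1,\eps)$ contained in a small metric ball rather than as a general $\rho\mapsto M(\rho)$; otherwise the arguments agree step for step, including the uniform-in-$n$ choice of $N$.
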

\begin{proof}
    Fix any $\eta>0$ and $x\in X$. Since $r$ satisfies weak Walters condition, there exists $N_1=N_1(\eta)$ such that 
    \begin{equation} \label{eq>N1}
        \text{Var}_{k(n)}(r,n,\eps)<\eta \quad \text{ for all }n>N_1.
    \end{equation}
    For $n\leq N_1$, let $\eta'>0$ be a constant such that $d(x_1,x_2)<\eta'\implies |r(x_1)-r(x_2)|<\eta/N_1$. Let $N\in \mathbb{N}^+$ be such that
    $$
x_2\in B^f_{[-N+1,N-1]}(x_1,\eps) \implies d(x_1,x_2)<\eta'.
    $$
    Then for any $y\in B_{[-N-k(n)+1,n+N+k(n)-1]}^f(x,\eps)$, we know $d(f^i(x),f^i(y))<\eta'$ for all $i\in [0,n-1]$, which gives $|r(f^i(x))-r(f^i(y))|<\eta/N_1$ for all such $i$. Consequently, we know 
    \begin{equation} \label{eq<N1}
        |S_nr(x)-S_nr(y)|<\eta \quad \text{ for all such }y \text{ with }n\leq N_1.
    \end{equation}
    The lemma now follows from a combination of \eqref{eq>N1} and \eqref{eq<N1}.
\end{proof}

\begin{lemma} \label{lem:distanceXrclose}
    For any $\eta>0$, there exists $\eps_1=\eps_1(\eta)\in (0,\eps)$ such that for any $x\in X$, $n\in \mathbb{N}^+$, if $y\in B^f_{[-N-k(n)+1,N+k(n)+n-1]}(x,\eps_1)$, where $N=N(\eta/5c)$ with $c=c(f,r)$ being as in Lemma \ref{lem:distancecomparison}, then writing $p=(x,0)$, $q=(y,0)$, we have
    $$
d_{X_r}(f_t(p),f_t(q))<\eta \quad \text{ for all } t\in [0,S_nr(x)].
    $$
\end{lemma}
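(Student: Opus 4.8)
The strategy is to translate the hypothesis, which controls the base orbit of $y$ relative to $x$ along the symmetric window $[-N-k(n)+1, N+k(n)+n-1]$ under the small metric $\eps_1$, into a uniform estimate on the Bowen--Walters distance $d_{X_r}(f_t(p), f_t(q))$ for all $t \in [0, S_n r(x)]$. The natural first step is to write the flowed-up points explicitly: for $t \in [0, S_n r(x)]$, there is a unique integer $i = i(t) \in \{0, 1, \dots, n-1\}$ with $S_i r(x) \le t < S_{i+1} r(x)$, so that $f_t(p) = (f^i(x), t - S_i r(x))$. I would then need to locate $f_t(q)$; because the partial sums $S_i r(y)$ are close to $S_i r(x)$ — here is where Lemma \ref{lem:partialsumrvariation} enters, applied with the tolerance $\eta/5c$ to get $|S_i r(x) - S_i r(y)|$ small for every $i \le n$ — the "ceiling index" for $q$ at time $t$ will equal $i(t)$ except possibly in a thin sliver of times near the jumps $S_i r(x)$, and even there it is off by at most one. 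So $f_t(q) = (f^{i'}(y), t - S_{i'} r(y))$ with $i' \in \{i, i\pm 1\}$.

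The second step is to bound $d_{X_r}^+(f_t(p), f_t(q))$ and invoke the right-hand inequality of Lemma \ref{lem:distancecomparison}, $d_{X_r} \le c\, d_{X_r}^+$. In the generic case $i' = i$, the first branch of the minimum defining $d_{X_r}^+$ gives
$$
d_{X_r}^+(f_t(p), f_t(q)) \le d(f^i(x), f^i(y)) + \left| \frac{t - S_i r(x)}{r(f^i(x))} - \frac{t - S_i r(y)}{r(f^i(y))} \right|.
$$
The first term is below $\eps_1$ directly from the hypothesis (note $i \in [0, n-1]$ lies in the controlled window). For the second term I would use $r \ge 1$, the fact that $|S_i r(x) - S_i r(y)| < \eta/5c$ from Lemma \ref{lem:partialsumrvariation}, and continuity/uniform continuity of $r$ together with $d(f^i(x), f^i(y)) < \eps_1$ to make $|r(f^i(x)) - r(f^i(y))|$ small — so after shrinking $\eps_1$ the whole quantity is $< \eta/c$ and the claim follows. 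In the boundary case $i' = i \pm 1$, the second or third branch of $d_{X_r}^+$ (the one crossing an identification $(z, r(z)) \sim (f(z), 0)$) must be used instead; here I would bound the contribution by $d(f^i(x), f^{i'}(y)) + (\text{fractional-height mismatch})$, again controlled because $t$ is within $|S_i r(x) - S_i r(y)| < \eta/5c$ of the jump and $f^{i'}(y)$ is within $\eps_1$ of $f^{i' }(x)$ which is within one application of $f$ (Lipschitz) of $f^i(x)$ — all indices still within the symmetric window.

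The main obstacle is the bookkeeping at the jump times: ensuring the index mismatch is genuinely at most one (which needs $\eta/5c$ smaller than, say, half the minimum gap $1$, i.e. $r \ge 1$ does the job), and that when $p$ and $q$ are momentarily "on opposite sides" of an identification the correct branch of $d_{X_r}^+$ yields the same $O(\eps_1 + \eta/c)$ bound rather than an $O(1)$ one. Once the constants are chosen in the order $\eta \mapsto$ (tolerance $\eta/5c$ in Lemma \ref{lem:partialsumrvariation}, giving $N$ and $k$) $\mapsto \eps_1$ (small enough for uniform continuity of $r$ over $\eps_1$-balls and for $c(\eps_1 + \text{small}) < \eta$), the estimate closes uniformly in $t$, and the lemma follows.
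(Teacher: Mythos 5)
Your proposal is correct and takes essentially the same route as the paper's proof: both invoke Lemma \ref{lem:partialsumrvariation} with tolerance $\eta/5c$ to control the partial sums $|S_m r(x)-S_m r(y)|$, split time into the generic range (ceiling index for $q$ equals that for $p$, handled by the first branch of $d_{X_r}^+$) and the thin slivers near jump times $S_m r(x)$ (index off by one, handled by the second or third branch), and then pass from $d_{X_r}^+$ to $d_{X_r}$ via Lemma \ref{lem:distancecomparison}. The jump-time bookkeeping you flag as the main obstacle is precisely what the paper carries out in its Case II, with $\eps_1=\min\{\eta/5c,\eps/2\}$ and the sliver width taken to be $2\eta'=2\eta/5c$.
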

By expansiveness of $(X,f)$ at scale $\eps$ and compactness of $X$, as an immediate corollary of Lemma \ref{lem:distanceXrclose}, we have
\begin{corollary} \label{coro:distanceXrclose}
    For any $\eta>0$, there exists $N_1=N_1(\eta)\in \mathbb{N}^+$ such that for any $x\in X$, $n\in \mathbb{N}^+$, if $y\in B^f_{[-N-N_1-k(n)+1,N+N_1+k(n)+n-1]}(x,\eps)$, then
    $$
d_{X_r}(f_t(p),f_t(q))<\eta \quad \text{ for all } t\in [0,S_nr(x)],
    $$
    where $p=(x,0)$, $q=(y,0)$.
\end{corollary}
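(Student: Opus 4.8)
The plan is to derive Corollary~\ref{coro:distanceXrclose} directly from Lemma~\ref{lem:distanceXrclose} by the standard device of upgrading a wide two-sided orbit window of $\eps$-closeness to a narrower window of $\eps_1$-closeness, at the cost of a fixed number of extra iterates on each side. The only ingredients needed are expansiveness of $(X,f)$ at scale $\eps$ and compactness of $X$, exactly as announced in the sentence preceding the corollary.

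First I would record the following uniform consequence of expansiveness: since $\Gamma^f_\eps(x)=\{x\}$ for every $x\in X$, for each $\eps_1>0$ there is an integer $M=M(\eps_1)$, depending only on $(X,f)$ and $\eps_1$, such that
\[
d(f^i(x'),f^i(y'))\le\eps\ \text{ for all }|i|\le M\ \Longrightarrow\ d(x',y')<\eps_1,
\]
uniformly in $x',y'\in X$. This is proved by contradiction and compactness: if it failed for some $\eps_1$, one would get points $x_m,y_m\in X$ with $d(f^i x_m,f^i y_m)\le\eps$ for all $|i|\le m$ but $d(x_m,y_m)\ge\eps_1$; passing to a convergent subsequence $(x_m,y_m)\to(x_\infty,y_\infty)$ and letting $m\to\infty$ gives $d(f^i x_\infty,f^i y_\infty)\le\eps$ for every $i\in\mathbb{Z}$, i.e.\ $y_\infty\in\Gamma^f_\eps(x_\infty)=\{x_\infty\}$, contradicting $d(x_\infty,y_\infty)\ge\eps_1$.

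Now, given $\eta>0$, let $\eps_1=\eps_1(\eta)\in(0,\eps)$ be the scale supplied by Lemma~\ref{lem:distanceXrclose} (with $N=N(\eta/5c)$ as there), and define $N_1=N_1(\eta):=M(\eps_1(\eta))$; this depends only on $\eta$, as required. Suppose $y\in B^f_{[-N-N_1-k(n)+1,\,N+N_1+k(n)+n-1]}(x,\eps)$. For every integer $j$ in the inner window $[-N-k(n)+1,\,N+k(n)+n-1]$ and every $|i|\le N_1$, the index $j+i$ lies in the outer window, so $d(f^{j+i}x,f^{j+i}y)<\eps$, hence $\le\eps$; applying the implication above with $(x',y')=(f^j x,f^j y)$ yields $d(f^j x,f^j y)<\eps_1$. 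Thus $y\in B^f_{[-N-k(n)+1,\,N+k(n)+n-1]}(x,\eps_1)$, and Lemma~\ref{lem:distanceXrclose} then gives $d_{X_r}(f_t(p),f_t(q))<\eta$ for all $t\in[0,S_nr(x)]$, which is exactly the assertion. Since the whole argument is a routine combination of expansiveness and compactness, I do not expect a real obstacle; the only points requiring care are the bookkeeping of the window endpoints (checking $j\pm i$ stays in the outer window as $j$ ranges over the inner one) and verifying that $N_1$ genuinely depends on $\eta$ alone, both of which are immediate from the construction.
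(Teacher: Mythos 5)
Your proof is correct and takes exactly the route the paper intends: the authors leave the corollary unproved, citing only "expansiveness of $(X,f)$ at scale $\eps$ and compactness of $X$," and your argument (uniform expansivity constant $M(\eps_1)$ via a compactness/contradiction argument, then shrinking the outer $\eps$-window to the inner $\eps_1$-window and invoking Lemma~\ref{lem:distanceXrclose}) is the standard way to make that implication precise. The window bookkeeping ($j+i$ staying in the outer interval when $j$ ranges over the inner one and $|i|\le N_1$) and the dependence of $N_1$ on $\eta$ alone are both checked correctly.
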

\begin{proof}[Proof of Lemma \ref{lem:distanceXrclose}]
    Fix any $\eta>0$, and write $\eta':=\eta/5c$. Let $\eps_1:=\min \{\eta',\eps/2\}$. Let $x,n,y$ be as in the lemma. We will prove the lemma by discussing $t\in [0,S_n r(x)]$ by cases. 

    \emph{Case I:} when $t\in \bigcup_{m=0}^{n-1}[S_mr(x)+2\eta',S_{m+1}r(x)-2\eta']$, then $t\in [S_mr(x)+2\eta',S_{m+1}r(x)-2\eta']$ for some $m\in [0,n-1]$. Since $|S_mr(x)-S_mr(y)|<\eta'$ by Lemma \ref{lem:partialsumrvariation}, we know $f_t(p)=(f^m(x),s_t)$, $f_t(q)=(f^m(y),s_t')$, where $s_t\in [2\eta',r(f^m(x))-2\eta']$, $s_t'\in [s_t-\eta',s_t+\eta']$. Since
\begin{equation} \label{eq:stst'difference}
    |s_t/r(f^m(x))-s_t'/r(f^m(y))|\leq |r(f^m(x))-r(f^m(y))|+|s_t-s_t'|\leq 2\eta',
\end{equation}
    it follows from Lemma \ref{lem:distancecomparison} that
    \begin{equation} \label{eq:CaseIdxrftpftq}
        d_{X_r}(f_t(p),f_t(q))\leq c(d(f^m(x),f^m(y))+|s_t/r(f^m(x))-s_t'/r(f^m(y))|)\leq 3c\eta'<\eta
    \end{equation}
    \emph{Case II:} when $t\in [0,S_nr(x)]\setminus (\bigcup_{m=0}^{n-1}[S_mr(x)+2\eta',S_{m+1}r(x)-2\eta'])$, either $t\in (S_mr(x),S_mr(x)+2\eta')$ for some $m\in [0,n-1]$, or $t\in (S_mr(x)-2\eta',S_mr(x))$ for some $m\in [1,n]$. We assume the first, as the second can be derived via a symmetric argument. Write $f_t(p):=(f^m(x),s_t)$. We still have $|S_mr(x)-S_mr(y)|<\eta'$ by Lemma \ref{lem:partialsumrvariation}, yet the first coordinate of $f_t(q)$ have two possiblities in this case.
    \begin{enumerate}
        \item If $f_t(q)=(f^{m-1}(y),s_t')$, since $|S_{m-1}r(x)-S_{m-1}r(y)|<\eta'$ by Lemma \ref{lem:partialsumrvariation}, together with $t\in [S_mr(x),S_mr(x)+2\eta]$ and $|S_mr(x)-S_mr(y)|<\eta'$, we know $s_t'=t-S_{m-1}r(y)\in [r(f^{m-1}(y))-2\eta',r(f^{m-1}(y))]$. Then it follows from Lemma \ref{lem:distancecomparison} that
        \begin{equation} \label{eq:CaseIIdxrftpftq1}
           d_{X_r}(f_t(p),f_t(q))\leq c(d(f^m(x),f^m(y))+1-s_t'/r(f^{m-1}(y))+s_t/r(f^m(x)))<5c\eta'=\eta. 
        \end{equation}
    
    \item If $f_t(q)=(f^m(y),s_t')$, as $|s_t-s_t'|=|S_mr(x)-S_mr(y)|<\eta'$, again by \eqref{eq:stst'difference} and Lemma \ref{lem:distancecomparison} we know
    \begin{equation} \label{eq:CaseIIdxrftpftq2}
        d_{X_r}(f_t(p),f_t(q))\leq c(d(f^m(x),f^m(y))+|s_t/r(f^m(x))-s_t'/r(f^m(y))|)<3c\eta'<\eta.
    \end{equation}
    \end{enumerate}
    The proof of Lemma \ref{lem:distanceXrclose} is now concluded by combining \eqref{eq:CaseIdxrftpftq}, \eqref{eq:CaseIIdxrftpftq1}, and \eqref{eq:CaseIIdxrftpftq2}.
\end{proof}

We are now ready with the ingredients for the specification property of $(X_r,F)$, which is stated as follows.
\begin{proposition} \label{lem:suspensionspecification}
    For any $\eta>0$, there exists a monotonically increasing function $h_r=h_{r,\eta}:\mathbb{R}^+\to \mathbb{R}^+$ such that $(X_r,F)$ satisfies weak controlled specification at scale $\eta$ with gap function $h_r$, and $\liminf_{t\to \infty}\frac{h(t)}{\log t}=0$.
\end{proposition}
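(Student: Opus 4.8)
The plan is to transfer the weak controlled specification of $(X,f)$ to the suspension $(X_r,F)$, using the regularity of the roof function $r$ to convert ``closeness in the base'' into ``closeness in $X_r$'' via Corollary \ref{coro:distanceXrclose}, and to convert discrete gap lengths (number of iterates of $f$) into continuous gap times (flow time) while keeping the growth rate logarithmically small. First I would fix $\eta>0$ and let $h_f = h_{f,\delta}$ be the gap function witnessing weak controlled specification for $(X,f)$ at a suitably small scale $\delta = \delta(\eta)$ (to be chosen using Corollary \ref{coro:distanceXrclose}, so that orbits $\delta$-close in the base over a long enough window stay $\eta$-close in $X_r$ for the duration of the corresponding roof-sum). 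Given flow-orbit segments $\{(p_i,t_i)\}_{i=1}^n \subset X_r$, I would first normalize each $p_i$ to the base, writing $p_i = (x_i, u_i)$ with $0 \le u_i < r(x_i)$, and replace $(p_i, t_i)$ by a slightly enlarged base orbit segment $(x_i, n_i)$ where $n_i \in \mathbb{N}$ is the least integer of iterates of $f$ whose roof-sum $S_{n_i} r(x_i)$ covers the interval $[u_i, u_i + t_i]$; note $n_i \le t_i + 1$ since $r \ge 1$.

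**Applying base specification and reassembling.** Next I would apply weak controlled specification for $(X,f)$ to the base segments $\{(x_i, n_i)\}_{i=1}^n$: this produces discrete gaps $m_i \le \max\{h_f(n_i), h_f(n_{i+1})\}$ and a shadowing point $w \in X$ whose $f$-orbit, after the prescribed concatenation, stays $\delta$-close (in the base Bowen metric over the relevant windows) to each $x_i$. Taking $y := (w, 0) \in X_r$ and flowing, I would use Corollary \ref{coro:distanceXrclose} on each block to conclude that, up to a bounded number of extra iterates $N_1(\eta)$ of ``buffer'' absorbed into the specification constants (enlarging $n_i$ to $n_i + 2N_1 + 2k(n_i)$ in the base before applying specification), the flow orbit of $y$ shadows each $(p_i, t_i)$ to within $\eta$ in $d_{X_r}$. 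The flow-time gap between consecutive blocks is then $\tau_i = S_{m_i} r(\cdot) + (\text{small roof-sum corrections from the buffer and from } u_i)$, which is bounded by $L \cdot m_i + O(k(n_i) + 1) \le L \max\{h_f(n_i), h_f(n_{i+1})\} + O(k(n_i)+ 1)$. Defining
$$
h_r(t) := C\bigl( h_f(\lceil t\rceil + 1) + k(\lceil t \rceil + 1) + 1\bigr)
$$
for a suitable constant $C = C(\eta, L, N_1)$ and taking its monotone majorant gives a valid gap function, and since $n_i \le t_i + 1$ the constraint $\tau_i \le \max\{h_r(t_i), h_r(t_{i+1})\}$ is met.

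**Growth rate and the main obstacle.** Finally, the rate condition $\liminf_{t\to\infty} h_r(t)/\log t = 0$ follows because $\liminf_{n\to\infty} h_f(n)/\log n = 0$ by hypothesis and $\lim_{n\to\infty} k(n)/\log n = 0$ by the weak Walters condition, so the sum $h_f(n) + k(n) + 1$ also has $\liminf$ of its ratio with $\log n$ equal to $0$; one must check that taking the monotone majorant does not destroy this, which is where the $\liminf$ (rather than $\lim$) is essential — along the subsequence $n_j$ realizing $h_f(n_j)/\log n_j \to 0$, the majorant at $n_j$ is still controlled provided $h_f$ does not spike badly just before $n_j$, and this is handled exactly as in the symbolic case of \cite{Pav19} (one works with the majorant from the outset). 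The main obstacle I expect is bookkeeping the roof-sum corrections precisely: because we do \emph{not} assume $r$ or $f^{-1}$ is Lipschitz, the ``buffer'' windows of size $\sim N_1(\eta) + k(n_i)$ needed to invoke Corollary \ref{coro:distanceXrclose} depend on $n_i$ through $k$, and one must verify that enlarging the base segments by this $n_i$-dependent amount before applying base specification does not create a feedback loop — it does not, since $k(n_i) + N_1 = o(n_i)$, so the enlarged lengths $n_i' = n_i + O(k(n_i) + N_1)$ satisfy $h_f(n_i') = $ roughly $h_f(n_i)$ up to the same $o(\log)$ slack, and the whole estimate closes.
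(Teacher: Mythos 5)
Your overall strategy matches the paper's exactly: normalize each orbit segment to a base point, enlarge the base segments by a buffer of roughly $N_1(\eta)+k(n_i)$ iterates on each side so that Corollary~\ref{coro:distanceXrclose} converts base $\eps$-closeness over the enlarged window into $\eta$-closeness in $X_r$, apply base specification to the enlarged segments, and convert discrete gaps to flow-time gaps.

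However, the step you flagged and then dismissed is a genuine gap, not a non-issue. Applying base specification to the enlarged segments $(x_i',n_i')$ with $n_i'=n_i+O(N_1+k(n_i))$ yields a gap bound $\tau_i'\leq\max\{h_f(n_i'),h_f(n_{i+1}')\}$, with $h_f$ evaluated at the \emph{enlarged} lengths. Your proposed $h_r(t):=C\bigl(h_f(\lceil t\rceil+1)+k(\lceil t\rceil+1)+1\bigr)$ evaluates $h_f$ at at most $n_i+1<n_i'$, and since $h_f$ is monotone increasing, the needed inequality $h_f(n_i')\leq h_f(\lceil t_i\rceil+1)$ simply fails. The assertion that ``$h_f(n_i')$ is roughly $h_f(n_i)$ up to $o(\log)$ slack'' is not available from the hypotheses: a monotone $h_f$ with $\liminf_n h_f(n)/\log n=0$ can be constructed to take an arbitrarily large jump somewhere in the interval $(n_i,n_i')$, so there is no quantitative comparison between $h_f(n_i)$ and $h_f(n_i')$. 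The correct repair is to build the enlargement into the gap function, e.g.\ $h_r(t):=2L\bigl(N+N_1+h_f(\lceil t\rceil+2N+2N_1+2k(\lceil t\rceil))+k(\lceil t\rceil)\bigr)$, so the bound $h_f(n_i')\leq h_f(\lceil s_i\rceil+2N+2N_1+2k(\lceil s_i\rceil))$ holds by monotonicity (using $n_i\leq s_i$). The growth condition $\liminf_{t\to\infty}h_r(t)/\log t=0$ then has to be verified by \emph{inverting} the enlargement: along the subsequence $n_j$ with $h_f(n_j)/\log n_j\to0$, set $t_j:=n_j-2N-2N_1-2k(n_j)-1$; monotonicity of $k$ gives $\lceil t_j\rceil+2N+2N_1+2k(\lceil t_j\rceil)\leq n_j$, so $h_f$ at the enlarged argument of $t_j$ is at most $h_f(n_j)$, while $\log t_j=\log n_j+o(1)$ because $k(n_j)=o(n_j)$. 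Finally, the monotone-majorant worry is moot: $h_f$ and $k$ are already assumed monotone, so this $h_r$ is monotone as written.
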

\begin{proof}
    Fix $\eta>0$. Let $\{(p_i,s_i)\}_{i=1}^n$ be a sequence of orbit segments in $X_r$. We first consider the case where each $(p_i,s_i)$ is in the form of $((x_i,0),S_{n_i}r(x_i))$ with $x_i\in X$ and $n_i\in \mathbb{N}^+$ for all $i$. Let $p_i':=(x_i',0)$ where $x_i':=f^{-(N+N_1+k(n_i)-1)}(x_i)$ with $N_1=N_1(\eta)$, and $s_i':=S_{n_i'}r(x_i')$, where $n_i':=2N+2N_1+2k(n_i)+n_i-1$. By applying weak controlled specification of $(X,f)$ at scale $\eps$ with gap function $h$ to $\{(x_i',n_i')\}_{i=1}^n$, there exist a sequence of non-negative integers $\{\tau_i'\}_{i=1}^{n-1}$ with $\tau_i'\leq \max\{h(n_i'),h(n'_{i+1})\}$, and a point $y'\in X$ such that 
    $$
d_{n_i'}(f^{\sum_{j=1}^{i-1}\tau_j'+n'_{j}}(y),x_i')<\eps.
    $$
    Let $y_i':=f^{\sum_{j=1}^{i-1}\tau_j'+n'_{j}}(y)$, and $y_i:=f^{N+N_1+k(n_i)-1}(y_i')$. Also write $q_i:=(y_i,0)$ and $q_i':=(y_i',0)$. By applying Corollary \ref{coro:distanceXrclose} to $\eta$, $x_i,n_i,y_i$, we know
    \begin{equation}
        d_{X_r}(f_t(p_i),f_t(q_i))<\eta \quad \text{ for all }t\in [0, s_i].
    \end{equation}
    Let $q:=q_1=(y_1,0)$. Notice that $f_{\tau_i+s_i}(q_i)=q_{i+1}$, where 
    $$
\tau_i=S_{2N+2N_1+k(n_i)+k(n_{i+1})+\tau_i'}r(y_i).
    $$
    Then $\tau_i$ is exactly the transition time $f^{s_i+\sum_{j=1}^{i-1}(s_j+\tau_j)}(q)$ spent on traveling to the $(s_{i+1},\eta)$-neighborhood of $p_{i+1}$. For each $t>0$, let
    $$
h_r(t):=2L(N+N_1+h(\lceil t \rceil)+k(\lceil t \rceil)),
    $$
    where $\lceil t \rceil$ denotes the smallest integer greater than or equal to $t$. By our choice on $\tau_i'$ and $r(X)\subset [1,L]$, it is clear that
    $$
\tau_i\leq 2L(N+N_1+\max\{k(n_i)+h(n_i),k(n_{i+1})+h(n_{i+1})\})\leq \max\{h_r(s_i),h_r(s_{i+1})\}.
    $$
    This concludes the case where each $(p_i,s_i)$ takes a special form. The general case follows immediately by stretching each orbit for at most $L$ time in both directions, and invoking the result from above. The corresponding gap function $h_r$ in the general case is therefore
    $$
h_r(t):=2L(N+N_1+h(\lceil t+2L \rceil)+k(\lceil t+2L \rceil)),
    $$
    which clearly satisfies $\liminf_{t\to \infty}\frac{h(t)}{\log t}=0$ by the definition of $h$ and $k$.
\end{proof}

\subsection{Equilibrium states of frame flows in nonpositive curvature} 

\subsubsection{Frame flows in nonpositive curvature}
Suppose that $(M,g)$ is a $C^{\infty}$ closed $n$-dimensional Riemannian manifold,
where $g$ is a Riemannian metric of nonpositive (sectional) curvature. Let $\bar \pi: SM\to M$ be the unit tangent bundle over $M$. For each $v\in S_pM$,
we always denote by $\gamma_{v}: \RR\to M$ the unique geodesic on $M$ satisfying the initial conditions $\gamma_v(0)=p$ and $\dot \gamma_v(0)=v$. The geodesic flow $(g^{t})_{t\in\mathbb{R}}$ (generated by the Riemannian metric $g$) on $SM$ is defined as:
\[
g^{t}: SM \rightarrow SM, \qquad (p,v) \mapsto
(\gamma_{v}(t),\dot \gamma_{v}(t)),\ \ \ \ \forall\ t\in \RR .
\]

A vector field $J(t)$ along a geodesic $\gamma:\RR\to M$ is called a \emph{Jacobi field} if it satisfies the \emph{Jacobi equation}:
\[J''+R(J, \dot \gamma)\dot \gamma=0\]
where $R$ is the Riemannian curvature tensor and\ $'$\ denotes the covariant derivative along $\gamma$.

A Jacobi field $J(t)$ along a geodesic $\gamma(t)$ is called \emph{parallel} if $J'(t)=0$ for all $t\in \RR$. The notion of \emph{rank} is defined as follows.
\begin{definition}
For each $v \in SM$, we define \text{rank}($v$) to be the dimension of the vector space of parallel Jacobi fields along the geodesic $\gamma_{v}$, and 
\[\text{rank}(M):=\min\{\text{rank}(v): v \in SM\}.\] 
For a geodesic $\gamma$ we define $\text{rank}(\gamma):=\text{rank}(\dot \gamma(t)), \forall\ t\in \mathbb{R}.$
\end{definition}

Let $M$ be a rank one closed manifold of nonpositive curvature. Then $SM$ splits into two invariant subsets under the geodesic flow: the regular set $\text{Reg}:= \{v\in SM: \text{rank}(v)=1\}$, and the singular set $\text{Sing}:= SM \setminus \text{Reg}$.

Let $X$ be the universal cover of $M$. We call two geodesics $\gamma_{1}$ and $\gamma_{2}$ on $X$ \emph{positively asymptotic or asymptotic} if there is a positive number $C > 0$ such that $d(\gamma_{1}(t),\gamma_{2}(t)) \leq C, ~~\forall~ t \geq 0.$
The relation of asymptoticity is an equivalence relation between geodesics on $X$. The class of geodesics that are asymptotic to a given geodesic $\gamma_v/\gamma_{-v}$ is denoted by $\gamma_v(+\infty)$/$\gamma_v(-\infty)$ or $v^+/v^-$ respectively. We call them \emph{points at infinity}. Obviously, $\gamma_{v}(-\infty)=\gamma_{-v}(+\infty)$. We call the set $\partial X$ of all points at infinity the \emph{boundary at infinity}.

The frame bundle is defined as
\begin{equation*}
\begin{aligned}
FM:=\{&(x, v_0, v_1, \cdots, v_{n-1}): x\in M, v_i\in T_xM, \\
&\{v_0, \cdots, v_{n-1}\} \text{\ is a positively oriented orthonormal frame at\ } x\}.
\end{aligned}
\end{equation*}
The frame flow $F^t:FM\to FM$ is defined by
$$F^t(x, v_0, v_1, \cdots, v_{n-1}):= (g^t(v_0), P_\gamma^t(v_1),\cdots, P_\gamma^t(v_{n-1}))$$
where $P_\gamma^t$ is the parallel transport along the geodesic $\gamma_{v_0}(t)$.
There is a natural fiber bundle $\pi: FM\to SM$ that takes a frame to its first vector, and then each fiber can be identified with $SO(n-1)$. We have $\pi \circ F^t=g^t\circ \pi$ and $F^t$ acts isometrically along the fibers.

The study of frame flows on manifolds of negative curvature has been developed in 1970s by Brin, Gromov \cite{Brin75, Brin80, BG} etc. In this case, the frame flow $F^t$ is a partially hyperbolic flow, that is, there exists an $F^t$-invariant splitting $TFM = E_F^s \oplus E_F^c \oplus E_F^u$, where $\dim E_F^c = 1+\dim SO(n-1)$ and $F^t$ acts isometrically on the center bundle. The following are the cases when $F^t$ is known to be ergodic with respect to the normalized volume measure on $FM$, and hence to be topologically transitive.
\begin{proposition}(\cite[Theorem 0.2]{BP03})
Let $F^t$ be the frame flow on an $n$-dimensional compact
smooth Riemannian manifold with sectional curvature between $-\Lambda^2$ and $-\lambda^2$ for some
$\Lambda, \lambda>0$. Then in each of the following cases the flow and its time-one map are ergodic:
\begin{itemize}
  \item if the curvature is constant,
  \item for a set of metrics of negative curvature which is open and dense in the $C^3$ topology,
  \item if $n$ is odd and $n\neq 7$,
  \item if $n$ is even, $n\neq 8$, and $\lambda/\Lambda>0.93$,
  \item if $n=7$ or $8$ and $\lambda/\Lambda>0.99023...$.
\end{itemize}
\end{proposition}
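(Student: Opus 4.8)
The proposition is the frame-flow ergodicity theorem of Brin--Gromov and Burns--Pollicott, and the plan is to reconstruct its proof through the theory of isometric extensions of Anosov flows. The starting observation is that under each hypothesis the curvature is strictly negative, so the geodesic flow $g^t$ on $SM$ is Anosov and hence ergodic---indeed Bernoulli---for the normalized Liouville measure. The frame flow $F^t$ is a compact group extension of $g^t$: the bundle $\pi\colon FM\to SM$ has structure group $SO(n-1)$, $F^t$ commutes with the $SO(n-1)$-action on the fibers, acts there isometrically, and satisfies $\pi\circ F^t=g^t\circ\pi$. For such extensions I would invoke Brin's ergodic dichotomy: there is a closed subgroup $H\leq SO(n-1)$, the transitivity (holonomy) group, topologically generated by the composite stable/unstable holonomies around $su$-loops in $SM$; $H$ is basepoint-independent up to conjugacy, the ergodic components of $F^t$ are exactly the $H$-orbits in the fibers, and $F^t$ is ergodic iff $H=SO(n-1)$. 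So the whole problem reduces to identifying $H$. The time-one statement would then follow from the same accessibility/transitivity data together with the fact that an ergodic partially hyperbolic time-one map whose hyperbolic factor is a K-system---here $g^1$ is Bernoulli---is itself K, hence ergodic.

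To pin down $H$ I would treat the five cases as follows. For constant curvature, $M=\Gamma\backslash\mathbb{H}^n$, the frame bundle is $\Gamma\backslash\mathrm{SO}^+(n,1)$, and the frame flow is right translation by the Cartan one-parameter subgroup, so ergodicity is immediate from the Howe--Moore/Moore ergodicity theorem for the simple group $\mathrm{SO}^+(n,1)$ (equivalently $H=SO(n-1)$ by irreducibility). For the other four cases the plan is the Brin--Gromov scheme in two stages. \emph{Stage 1:} prove that $H$ acts transitively on the sphere $S^{n-2}=SO(n-1)/SO(n-2)$, i.e.\ that holonomy can steer the second frame vector to any prescribed direction; this is where curvature enters quantitatively, via Jacobi-field estimates for the rotation of parallel transport accumulated around carefully chosen (bi)asymptotic geodesic configurations on the universal cover, and is exactly where the pinching thresholds ($\lambda/\Lambda>0.93$ for $n$ even, $>0.99023\ldots$ for $n=7,8$) are consumed. \emph{Stage 2:} upgrade transitivity on $S^{n-2}$ to $H=SO(n-1)$. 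Fixing a unit vector $e$, the isotropy $H_e\leq SO(n-2)$ is again a transitivity group of the same type for a lower-dimensional configuration, so induction on $n$ gives $H_e\supseteq SO(n-2)$; combined with transitivity on $S^{n-2}$, an orbit--stabilizer dimension count yields $\dim H=\dim SO(n-1)$, so $H$ is an open, hence full, subgroup of the connected group $SO(n-1)$. Feeding the Montgomery--Samelson--Borel list of compact connected groups acting transitively on spheres ($SO(k),U(k),SU(k),Sp(k),Sp(k)U(1),Sp(k)Sp(1),G_2,Spin(7),Spin(9)$) into the induction, and tracking the parity of $n$, should reproduce the case split: for $n$ odd the induction closes with no pinching except that $SU(3)\subset SO(6)$ obstructs at $n=7$; for $n$ even one needs Stage 1's pinching to kill the unitary/symplectic alternatives, and the exotic actions of $Spin(7)$ on $S^7$ and $G_2$ on $S^6$ force the exclusion of $n=8$ and, with $n=7$, the sharper bound. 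Finally, for the $C^3$-open-and-dense case: the condition $H=SO(n-1)$ is open in the $C^3$ topology on metrics (the Stage 1 holonomies vary continuously), and Brin--Gromov exhibit a $C^3$-dense family of negatively curved metrics for which it holds.

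I expect Stage 1 to be the main obstacle. It demands uniform quantitative control of parallel transport along asymptotic and biasymptotic geodesics and of the divergence of nearby geodesics, so as to show that the rotations realized by holonomy around suitable configurations fill a neighborhood of the identity in the relevant stabilizer; it is precisely the breakdown of these estimates under weak pinching, competing with the exceptional transitive sphere actions in dimensions $6,7,8$, that accounts for every restriction in the statement. By comparison, Stage 2 is Lie-theoretic bookkeeping once Stage 1 is in hand, and the group-extension dichotomy of the first paragraph is standard machinery.
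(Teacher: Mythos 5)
This proposition is not proved in the paper at all: it is quoted verbatim as \cite[Theorem 0.2]{BP03} (Burns--Pollicott), itself resting on Brin, Brin--Gromov and Brin--Karcher, so there is no in-paper argument to compare your proposal against. Judged on its own, your outline correctly reproduces the architecture of the actual proof in the literature: Brin's reduction of ergodicity of a compact group extension of an Anosov flow to the computation of the transitivity (holonomy) group $H\leq SO(n-1)$, the Montgomery--Samelson--Borel classification of compact connected groups acting transitively on spheres as the source of the exceptional dimensions $7$ and $8$ (via $SU(3)$ acting on $S^5$, $G_2$ on $S^6$, $Spin(7)$ on $S^7$), and Howe--Moore for the constant curvature case.

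The genuine gap is that the entire quantitative content of the theorem is deferred to your ``Stage 1'' and never carried out. The constants $0.93$ and $0.99023\ldots$ are not decorations: they are the output of delicate Jacobi-field and parallel-transport estimates along asymptotic geodesic configurations (Brin--Karcher for $n$ even, Burns--Pollicott for $n=7,8$), and producing them is the actual mathematical substance of the cases you are asked to prove. Saying ``this is where the pinching thresholds are consumed'' identifies where the work lives but does not do it; as written, none of the five bullet points except the constant-curvature one is actually established. There is also a misattribution in your case analysis: for $n$ even, $S^{n-2}$ is an even-dimensional sphere, on which the unitary and symplectic groups do not act transitively, so the pinching in that case is not needed to ``kill the unitary/symplectic alternatives'' in Stage 2 --- it is needed already to establish transitivity of $H$ on $S^{n-2}$, because the topological argument of Brin--Gromov that yields this transitivity for free works only for $n$ odd. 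For the purposes of the present paper, however, the correct move is simply to cite \cite{BP03}, which is what the authors do.
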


Spatzier and Visscher \cite[Theorem 1]{SV} obtained the following result for manifolds of negative curvature. Similar results are also obtained by Climenhaga, Pesin and Zelerowicz \cite[Theorem 5.8 and Corollary 5.10]{CPZ}, Carrasco and Rodriguez-Hertz \cite[Theorem A]{CRH}, etc.
\begin{theorem}(cf. \cite[Theorem 1]{SV})
Let $M$ be a closed, oriented, negatively curved $n$-manifold, with $n$ odd and not equal to $7$. For any H\"{o}lder continuous potential $\varphi: FM\to \mathbb{R}$ that is constant on the fibers of the bundle $FM\to SM$, there is a unique equilibrium state for $(F^t, \varphi)$. It is ergodic and has full support.
\end{theorem}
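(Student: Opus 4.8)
The plan is to deduce the statement from Theorem~\ref{thm:a}, applied to the frame flow $(FM,F^t,\tilde\varphi)$ with the \emph{trivial} orbit decomposition $\GGG=FM\times\RR^+$, $\PPP=\SSS=\emptyset$, so that conditions (2)--(4) of Theorem~\ref{thm:a} become, respectively, the Bowen property of $\tilde\varphi$, the bound $\liminf_t(h_\eta(t)+g(t))/\log t=0$ on the specification gap, and a vacuous pressure gap (the set $\DDD^c\cup[\PPP]\cup[\SSS]$ then consists only of length-zero segments, of pressure $-\infty<P(\tilde\varphi)$). The two structural ingredients I would set up first are that $F^t$ is entropy expansive and that the geodesic flow $g^t:SM\to SM$ carries specification with a bounded gap. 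Here one uses that, $M$ being a closed oriented negatively curved manifold, $g^t$ is Anosov, hence expansive, topologically mixing, and endowed with Bowen's specification property; that $F^t$ is an $SO(n-1)$-extension of $g^t$ acting isometrically along fibres; and --- crucially, because $n$ is odd and $n\neq 7$ --- that $F^t$ is topologically transitive (Brin), hence topologically mixing since $SO(n-1)$ is connected.

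For entropy expansiveness of $F^t$: if $g^t$ is expansive at scale $\eps_0$, then $\Gamma_{\eps_0}^{g}(\pi y)$ is a short orbit arc, so for $\eps\le\eps_0$ one has $\Gamma_\eps^{F}(y)\subset\pi^{-1}\big(\Gamma_\eps^{g}(\pi y)\big)$; the key point is that the ``difference rotation'' in $SO(n-1)$ between two frames over the same unit vector is \emph{preserved} by $F^t$ (parallel transport is linear and carries an orthonormal frame of $v^\perp$ to one of $(g^tv)^\perp$), so $F^t$ restricted to $\Gamma_\eps^F(y)$ is, up to a time change, translation along a one-dimensional arc times the identity on a bounded ball in the fibre, whence $h(\Gamma_\eps^F(y))=0$. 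Thus $F^t$ is entropy expansive at every small scale, every ergodic invariant measure is almost entropy expansive, and the hypotheses on $\eps$ in Theorem~\ref{thm:a} hold through Proposition~\ref{propexpansivegood}. For the Bowen property: $\tilde\varphi=\varphi\circ\pi$ with $\varphi$ H\"older and $\pi$ $1$-Lipschitz, and in an Anosov flow two $\delta$-shadowing orbit segments converge exponentially toward their midpoint, so the variation $\mathrm{Var}_t(\varphi,\delta)$ (in the sense of Section 2) is bounded uniformly in $t$ for $\delta$ small; since $\tilde\varphi$ is fibre-constant, $\mathrm{Var}_t(FM\times\RR^+,\delta)\le\mathrm{Var}_t(\varphi,\delta)\le C$, i.e. $\tilde\varphi$ is $C$-distorted over $FM\times\RR^+$ at scale $\eps$ (this also yields existence of an equilibrium state via Proposition~\ref{propexistence}(1)).

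The main obstacle is to prove that $F^t$ itself satisfies weak controlled specification \emph{at every scale} with a gap function that is $o(\log t)$ --- in fact bounded. Given orbit segments $\{(y_i,t_i)\}_{i=1}^m\subset FM$, I would project them to $SM$ and apply Bowen specification for $g^t$ at a scale $\eta_0\ll\delta$ with bounded gap $\tau_0$, producing a base orbit $\gamma_w$ that $\eta_0$-shadows each $\gamma_{\pi y_i}$ on the appropriate window; then build a single frame orbit over $\gamma_w$. On the $i$-th window the parallel-transport holonomy between the $\eta_0$-close geodesics $\gamma_w$ and $\gamma_{\pi y_i}$ drifts by only $O(\eta_0)$, because in a compact Anosov geodesic flow two $\eta_0$-shadowing segments span a surface of area $O(\eta_0)$ (exponentially pinched away from its ends) and the curvature is bounded, so choosing the fibre component of the starting frame over $w$ to match $y_i$'s under the local product identification keeps the lifted orbit within $\delta$ of $F^ty_i$ throughout the window. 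At each transition one must additionally rotate the accumulated frame by a prescribed element of $SO(n-1)$ to align its fibre component with $y_{i+1}$'s; here topological mixing of $F^t$, uniformized over the compact $FM\times FM$ at scale $\delta$, lets one lengthen the transition by a uniformly bounded amount to realize any such rotation. This produces weak controlled specification for $F^t$ at every scale $\delta$ with a bounded gap $h_\delta$, so condition (3) holds. The delicate points are making the holonomy drift on good windows genuinely $O(\eta_0)$ rather than $O(\eta_0 t_i)$ (this is where uniform hyperbolicity, hence exponential pinching, is used) and making the extra transition time uniform; the parity hypothesis $n$ odd, $n\neq 7$ enters precisely in guaranteeing transitivity --- hence mixing --- of $F^t$ via Brin's identification of the holonomy group with all of $SO(n-1)$, whereas for even $n$ one would instead need a bunching condition as in Theorem~\ref{frameflow}.

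With all hypotheses of Theorem~\ref{thm:a} verified, $(F^t,\tilde\varphi)$ has a unique equilibrium state $\mu$. It is ergodic because the equilibrium states form a simplex whose extreme points are the ergodic equilibrium states, so uniqueness forces $\mu$ to be extreme. And it has full support: the specification property lets one build, for any nonempty open set $U$, invariant measures that charge $U$ with mass bounded below and whose free energy $h_\nu(F^1)+\int\tilde\varphi\,d\nu$ is arbitrarily close to $P(\tilde\varphi)$ (concatenate, via specification, an orbit segment inside $U$ with segments realizing near-maximal partition sums); any weak-$*$ limit of such measures is an equilibrium state, hence equals $\mu$, so $\mu(\overline U)>0$ and no open set is $\mu$-null.
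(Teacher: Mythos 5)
The paper does not actually prove this statement: it is quoted as background from Spatzier--Visscher \cite{SV}, so there is no in-paper proof to compare against. Within the paper's own framework, the statement (minus the words ``ergodic and has full support'') is a corollary of Theorem \ref{frameflow}: strictly negative curvature makes $M$ bunched and $\text{Sing}=\emptyset$, so the pressure-gap hypothesis is vacuous, and ``$n$ odd, $n\neq 7$'' supplies topological transitivity of $F^t$ via the Burns--Pollicott proposition quoted just above the statement. Your proposal instead re-derives the uniformly hyperbolic special case directly from Theorem \ref{thm:a} with the trivial decomposition $\GGG=FM\times\RR^+$, which is a legitimate route (and close in spirit to \cite{SV}); the entropy-expansiveness and Bowen-property steps are correct and match what the paper does via Lemma \ref{liftpressure} and Proposition \ref{propexpansivegood}.

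The substance, however, is the specification property for $F^t$, and there your sketch compresses exactly the material the paper develops in \S4.2.3--4.2.5. The claim that the holonomy drift along a shadowing window is $O(\eta_0)$ is the Gauss--Bonnet estimate of Lemmas \ref{lemmaholonomy} and \ref{lemmaunstableexpansion}, and it only holds after you split the shadowing strip into a stable and an unstable piece, each exponentially pinched at one end; the naive width-times-length bound gives $O(\eta_0 t_i)$, so this decomposition must be made explicit. The ``lengthen the transition to realize any rotation'' step is the effective-density Lemma \ref{uniformtran}; note that transitivity of $F^t$ (not mixing) plus compactness and local product structure already connect prescribed \emph{frames}, so the separate ``rotation-correction'' layer in your argument is unnecessary once the density argument is run in $FM$ rather than $SM$. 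The inductive gluing is Theorem \ref{spe}. Each of these would need to be actually proved for your argument to be complete. Finally, ``ergodic'' is immediate from uniqueness, but ``full support'' is not delivered by Theorem \ref{thm:a} or anything else in the paper: your construction needs upper semicontinuity of $\nu\mapsto h_\nu$ (available from entropy expansiveness) and a lower bound on the mass the concatenated measures assign to $U$ that survives the weak-$*$ limit; this is standard given specification but is a genuine gap as written, and is the one clause of the theorem that does not follow from the paper's abstract results.
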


Recently, for geodesic flows on rank one closed manifolds of nonpositive curvature, Burns, Climenhaga, Fisher and Thompson \cite{BCFT} proved the uniqueness of equilibrium states for certain potentials:
\begin{theorem}(cf. \cite[Theorem A]{BCFT})
Let $g^t$ be the geodesic flow over a closed rank 1 manifold $M$ and let $\varphi :SM\to \RR$ be H\"{o}lder continuous or be $\varphi=q\phi_u$ where $q\in \RR$ and
$\phi_u(v):=-\lim_{t\to 0}\frac{1}{t}\log \det (Dg^t|_{E^u(v)}), v\in SM.$
If $P(Sing,\varphi) <P(\varphi)$, then $\varphi$ has a unique equilibrium state $\mu$. This equilibrium state is hyperbolic, fully supported, and is the weak$^{*}$ limit of weighted regular closed geodesics.
\end{theorem}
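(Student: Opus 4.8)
The plan is to deduce the statement from Theorem~\ref{thm:a'} of the present paper (equivalently, from the flow version of the Climenhaga--Thompson criterion \cite{CT16}); this is the route of \cite{BCFT}. Concretely, I would fix $\mathcal{D}=SM\times\RR^+$, build an orbit decomposition $(\mathcal{P},\mathcal{G},\mathcal{S})$ of $\mathcal{D}$, and verify: (i) $P^{\perp}_{\exp}(\varphi,\eps)<P(\varphi)$ for some small $\eps>0$; (ii) $\mathcal{G}$ satisfies weak controlled specification at every scale with uniformly bounded (constant) gap functions; (iii) $\varphi$ is $g$-distorted over $\mathcal{G}$ at scale $\eps$ with $g$ bounded, i.e.\ $\varphi$ has the Bowen property on $\mathcal{G}$; and (iv) $\lim_{\eta\to0}P([\mathcal{P}]\cup[\mathcal{S}],\varphi,\eta,\eps)<P(\varphi)$. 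Since the gap and distortion functions produced in (ii)--(iii) are constants, condition~(3) of Theorem~\ref{thm:a} holds automatically.

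For (i) I would invoke the flat strip theorem. If $\eps$ is small and $v\in\text{NE}(\eps)$, then there is a vector $w$ not on the $g^t$-orbit of $v$ with $d(g^tv,g^tw)\le\eps$ for all $t\in\RR$; lifting to the universal cover, $\gamma_v$ and $\gamma_w$ are distinct geodesics remaining a bounded distance apart, so they bound a flat strip, forcing $\text{rank}(v)\ge2$, i.e.\ $v\in\text{Sing}$. Hence $\text{NE}(\eps)\subset\text{Sing}$ for all small $\eps$, so any ergodic measure giving full mass to $\text{NE}(\eps)$ is carried by $\text{Sing}$, and therefore $P^{\perp}_{\exp}(\varphi,\eps)\le P(\text{Sing},\varphi)<P(\varphi)$. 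In particular an equilibrium state exists (Proposition~\ref{propexistence}).

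For the decomposition I would follow \cite{BCFT}. Choose a continuous $\lambda:SM\to[0,\infty)$ quantifying the weakest expansion rate of unstable Jacobi fields, with $\{\lambda=0\}=\text{Sing}$, put $\text{Reg}(\eta):=\{v:\lambda(v)\ge\eta\}$, and set
$$
\mathcal{G}(\eta):=\{(v,t): v\in\text{Reg}(\eta)\text{ and }g^tv\in\text{Reg}(\eta)\}.
$$
Given $(v,t)$, take $p(v,t):=\inf\{s\ge0:g^sv\in\text{Reg}(\eta)\}$, $s(v,t):=\inf\{r\ge0:g^{t-r}v\in\text{Reg}(\eta)\}$, and let $\mathcal{P},\mathcal{S}$ collect the resulting prefixes and suffixes, so that $t=p+g+s$ with the middle piece in $\mathcal{G}(\eta)$. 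Segments in $[\mathcal{P}]\cup[\mathcal{S}]$ spend all of their time outside $\text{Reg}(\eta)$, a neighbourhood of $\text{Sing}$ shrinking to $\text{Sing}$ as $\eta\to0$, so an upper semicontinuity argument for pressure gives $P([\mathcal{P}]\cup[\mathcal{S}],\varphi)\to P(\text{Sing},\varphi)<P(\varphi)$; fixing one small $\eta$ and then letting the resolution scale tend to $0$ yields (iv). The heart of the argument is the specification and Bowen properties for $\mathcal{G}=\mathcal{G}(\eta)$. The key geometric lemma is that every segment of $\mathcal{G}(\eta)$ is $\lambda_0(\eta)$-quasi-hyperbolic: because in nonpositive curvature the norm of an unstable Jacobi field is nondecreasing and convex, the expansion accrued near the two regular endpoints is never lost, which promotes ``regular at both ends'' to uniform hyperbolic-type estimates along the whole segment, despite possible long excursions near $\text{Sing}$ in between. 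Quasi-hyperbolic segments carry local stable/unstable manifolds of size bounded below in $\eta$ and a local product structure near their endpoints, so finitely many of them can be glued into one shadowing orbit with transition times bounded by a constant (using topological transitivity of the rank one geodesic flow to bridge distinct regular pieces); this gives weak controlled specification with constant gap at every scale, the scale affecting only the constant. For the Bowen property: when $\varphi$ is H\"older, two points in a Bowen ball about a segment of $\mathcal{G}(\eta)$ remain exponentially close away from the two ends, and H\"older continuity plus a geometric series bounds $|\int_0^t\varphi(g^sv)\,ds-\int_0^t\varphi(g^sw)\,ds|$ uniformly; when $\varphi=q\phi_u$, I would write $\int_0^t\phi_u(g^sv)\,ds=-\log\det(Dg^t|_{E^u(v)})$, telescope it into unit-time unstable Jacobians, and compare term by term along $w$, using that $E^u(g^sv)$ and $E^u(g^sw)$ stay close along a quasi-hyperbolic segment, so that the discrepancy collapses to a bounded error.

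Having verified (i)--(iv) for suitable small $\eta$ and $\eps$, Theorem~\ref{thm:a'} yields a unique equilibrium state $\mu$ for $(g^t,\varphi)$. The remaining conclusions are read off afterwards: $\mu(\text{Sing})=0$, since $P(\text{Sing},\varphi)<P(\varphi)=h_\mu(g^1)+\int\varphi\,d\mu$ forces $\mu(\text{Sing})<1$ and hence $=0$ by ergodicity, which with positivity of $\lambda$ on a set of positive $\mu$-measure gives hyperbolicity; full support is the usual consequence of the specification property together with transitivity; and realizing $\mu$ as the weak$^*$ limit of the uniform measures on maximal $(t,\eps)$-separated sets of regular orbit segments, which by the closing lemma may be replaced by regular closed geodesics weighted by $e^{\int\varphi}$, identifies it with the stated limit of weighted regular closed geodesics. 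The main obstacle is precisely the geometric input behind (ii)--(iii): the quasi-hyperbolicity of $\mathcal{G}(\eta)$-segments from convexity of Jacobi fields, and—harder still—the Bowen property of the geometric potential $\phi_u$, where one must control the discrepancy of the unstable subspaces along a quasi-hyperbolic orbit sharply enough for the log-determinant telescoping to close up.
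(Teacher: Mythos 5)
The paper does not prove this theorem: it is quoted verbatim from \cite{BCFT} as background for the frame-flow result, so there is no in-paper proof to compare against. Your outline does follow the general BCFT route (orbit decomposition, expansivity gap via the flat strip theorem, pressure gap on the singular part, specification and the Bowen property on the good core), and your treatment of (i) and (iv) and of the auxiliary conclusions is a fair, if highly compressed, summary of what \cite{BCFT} actually does.

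There is, however, a genuine gap at the heart of your construction: you take the good collection to be $\{(v,t): v\in \text{Reg}(\eta),\ g^tv\in \text{Reg}(\eta)\}$, which is the set the paper calls $\CCC(\eta)$, whereas the decomposition actually used (Definition \ref{decom}, following \cite{BCFT}) takes $\GGG(\eta)=\{(v,t): \int_0^\tau \lambda(g^sv)\,ds \ge \eta\tau \text{ and } \int_0^\tau \lambda(g^{-s}g^tv)\,ds \ge \eta\tau \ \forall \tau\in[0,t]\}$, with the prefix and suffix being the maximal pieces on which the \emph{average} of $\lambda$ is below $\eta$. Your claimed key lemma --- that convexity/monotonicity of Jacobi fields promotes ``regular at both ends'' to uniform quasi-hyperbolicity along the whole segment --- is false: $\lambda(v)\ge\eta$ is an instantaneous condition at the endpoints, and a segment of $\CCC(\eta)$ can spend essentially all of its length arbitrarily close to $\text{Sing}$ (e.g.\ winding through a flat cylinder), accruing essentially no expansion in between. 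Monotonicity only guarantees that stable/unstable distances do not grow back; it does not produce the exponential contraction $d^s(g^tw,g^tw')\le d^s(w,w')e^{-\eta t/2}$ of Lemma \ref{stablegeo}, which is exactly what the integrated condition in $\GGG(\eta)$ delivers. Without it, two orbits in a Bowen ball need not converge in the middle of the segment, so the geometric-series bound for the Bowen property of a H\"older $\varphi$ (and a fortiori the telescoping argument for $\phi_u$) breaks down, and the gluing errors in the iterative specification construction (cf.\ the factors $\lambda^{-(k+1-i)}$ in the proof of Theorem \ref{spe}) are no longer summable. To repair the argument you must replace your pointwise $\mathcal{G}$ by $\GGG(\eta)$ and prove the Bowen property and specification there, which is precisely the content of \cite[\S4, \S7]{BCFT}.
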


We consider the uniqueness of equilibrium states for frame flows on rank one closed manifolds of nonpositive curvature. Additionally, our target manifold has their pointwise collection of sectional curvatures being uniformly comparable in the following sense. For $v,w\in T_xM$, we denote by $K(v,w)$ the sectional curvature of the plane spanned by $v$ and $w$.
\begin{definition}
    We say $M$ is \emph{bunched} if there exists a constant $C_B>1$ such that for every $x\in M$ and $\{v_i\}_{i=1}^4\subset T_xM$, we have 
    $$
K(v_1,v_2)\in [C_B^{-1}K(v_3,v_4),C_BK(v_3,v_4)]
    $$
    whenever the sectional curvature is defined. 
\end{definition}
In particular, surfaces of nonpositive curvature and closed manifolds of negative curvature are all bunched. 
In the rest of this section, we always assume that $M$ is a rank one closed Riemannian manifold of bunched nonpositive curvature, whose frame flow $F^t: FM\to FM$ is topologically transitive. Our aim is to prove Theorem \ref{frameflow}, based on the application of Theorem \ref{thm:weprove}.

\subsubsection{Decomposition for the geodesic flow}
For basic notions on the geometry of geodesic flows, we refer to \cite[Section 2.4]{BCFT}. In particular, we have $g^t$-invariant subbundles $E^s$ and $E^u$ of $TSM$, and they are integrable to $g^t$-invariant foliations $W_g^s$ and $W_g^u$ respectively. For $v\in SM$, we call $W_g^{s/u}(v)$ the stable/unstable manifolds of the geodesic flow through $v$. We denote by $W_g^{s/u}(v,\delta)$ the ball of radius $\delta>0$ centered at $v$ with respect to the intrinsic metrics $d^{s/u}$ on $W_g^{s/u}(v)$.  
The notations for weak stable/unstable foliations $W_g^{cs/cu}$ are defined in a similar way.

For $v\in SM$, let $H^{s/u}(v)$ be the stable/unstable horospheres associated to $v$. Therefore, we have $H^{s/u}(v)=\bar\pi W_g^{s/u}(v)$. Recall that $\UUU_v^s: T_{\bar\pi v}H^s\to  T_{\bar\pi v}H^s$ is the symmetric linear operator associated to the stable horosphere $H^s$, and similarly for $\UUU_v^s$. Let $\lambda^u(v)$ be the minimal eigenvalue of $\UUU_v^u$ and let $\lambda^s(v)=-\lambda^u(-v)$. Then we define $\lambda(v)=\min \{\lambda^u(v), \lambda^s(v)\}.$ It is clear that $\lambda: SM\to \RR$ is a continuous function. 

Let $\eta>0$. Define 
\begin{equation*}
    \begin{aligned}
\GGG(\eta)&:=\{(v,t): \int_0^\tau \lambda(g^sv)ds \ge \eta\tau, \int_0^\tau \lambda(g^{-s}g^tv)ds \ge \eta\tau, \forall \tau \in [0,t]\},\\
\BBB(\eta)&:=\{(v,t): \int_0^t \lambda(g^sv)ds < \eta t\},\\
\text{Reg}(\eta)&:=\{v\in SM: \lambda(v)\ge \eta\},\\
\CCC(\eta)&:=\{(v,t)\in SM\times \RR^+: v\in \text{Reg}(\eta), \ g^tv\in \text{Reg}(\eta)\}.
    \end{aligned}
\end{equation*}
\begin{definition}\label{decom}
Given $(v,t)\in SM\times\RR^+$, take $p=p(v,t)$ to be the largest time such that $(v,p)\in \BBB(\eta)$, and $s=s(v,t)$ be the largest time in $[0, t-p]$ such that $(g^{t-s}v, s)\in \BBB(\eta)$. Then it follows that $(g^pv, g)\in \GGG(\eta)$ where $g=t-p-s$. Thus the triple $(\BBB(\eta), \GGG(\eta), \BBB(\eta))$ equipped with the functions $(p,g,s)$ determines a decomposition for $SM\times \RR^+$.
\end{definition}

Given $\eta>0$, let $\delta=\delta(\eta)>0$ be small enough so that for any $v,w\in SM$, 
\begin{equation}\label{smalldelta}
d(v,w)< \delta e^\Lambda \Rightarrow |\lambda(v)-\lambda(w)|\le \eta/2.    
\end{equation}
Here $\Lambda$ is the maximum eigenvalue of $\UUU^u(v)$ taken over all $v\in SM$. 
\begin{lemma}(\cite[Corollar 3.11]{BCFT})\label{stablegeo}
Given $\eta>0$, let $\delta=\delta(\eta)$ be as in \eqref{smalldelta} and $(v,T)\in \GGG(\eta)$, then every $w\in B_T (v, \delta)$ has $(w,T)\in \GGG(\eta/2)$. Moreover, for every $w,w'\in W_g^s(v,\delta)$ and $0\le t\le T$ we have
$$d^s(g^tw, g^tw')\le d^s(w,w')e^{-\frac{\eta}{2}t},$$
and  for every $w,w'\in g^{-T}W_g^u(g^Tv,\delta)$ and $0\le t\le T$, we have
$$d^u(g^tw, g^tw')\le d^u(g^Tw,g^Tw')e^{-\frac{\eta}{2}(T-t)}.$$
\end{lemma}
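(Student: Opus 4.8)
The plan is to prove the three assertions in turn. The first is a Birkhoff-type comparison, and the other two rest on the single geometric fact that the forward geodesic flow contracts intrinsic distances on $W_{g}^{s}$ (and the backward flow on $W_{g}^{u}$) with logarithmic rate at $g^{\sigma}u$ at least $\lambda^{s}(g^{\sigma}u)\ge\lambda(g^{\sigma}u)$ (resp.\ $\lambda^{u}(g^{\sigma}u)\ge\lambda(g^{\sigma}u)$): for $\xi\in E^{s}(u)$ the Riccati equation for the stable horospherical operator $\UUU^{s}$ gives $\frac{d}{d\sigma}\log\|Dg^{\sigma}\xi\|=\langle\UUU^{s}_{g^{\sigma}u}Dg^{\sigma}\xi,Dg^{\sigma}\xi\rangle\|Dg^{\sigma}\xi\|^{-2}\le-\lambda^{s}(g^{\sigma}u)\le-\lambda(g^{\sigma}u)$, whence $\|Dg^{t}\xi\|\le\|\xi\|\,e^{-\int_{0}^{t}\lambda(g^{\sigma}u)\,d\sigma}$, and symmetrically for the unstable direction under $g^{-t}$.

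For the first assertion, let $w\in B_{T}(v,\delta)$. Then $d(g^{s}v,g^{s}w)<\delta<\delta e^{\Lambda}$ for all $s\in[0,T]$, so \eqref{smalldelta} gives $|\lambda(g^{s}v)-\lambda(g^{s}w)|\le\eta/2$ there; integrating over $[0,\tau]$ and using $(v,T)\in\GGG(\eta)$ yields $\int_{0}^{\tau}\lambda(g^{s}w)\,ds\ge\eta\tau-\tfrac{\eta}{2}\tau=\tfrac{\eta}{2}\tau$ for every $\tau\in[0,T]$. Since $g^{-s}g^{T}w=g^{T-s}w$ is likewise $\delta$-close to $g^{-s}g^{T}v=g^{T-s}v$ for $s\in[0,T]$, the same computation along the reversed orbit gives $\int_{0}^{\tau}\lambda(g^{-s}g^{T}w)\,ds\ge\tfrac{\eta}{2}\tau$. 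Both defining inequalities of $\GGG(\eta/2)$ hold, so $(w,T)\in\GGG(\eta/2)$.

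For the contraction on $W_{g}^{s}$, fix $w,w'\in W_{g}^{s}(v,\delta)$ and choose a curve $c\colon[0,1]\to W_{g}^{s}(v)$ from $w$ to $w'$ with $\dot c(r)\in E^{s}(c(r))$ and $\mathrm{Length}(c)$ as close as desired to $d^{s}(w,w')$. Because the forward flow does not expand $d^{s}$, for every $\sigma\ge 0$ the curve $g^{\sigma}\circ c$ stays inside the neighbourhood of $g^{\sigma}v$ on which \eqref{smalldelta} is in force — this is precisely what the slack $e^{\Lambda}$ in the choice of $\delta$ absorbs, namely the size of the connecting curve. Hence $|\lambda(g^{\sigma}c(r))-\lambda(g^{\sigma}v)|\le\eta/2$ for all $\sigma\in[0,T]$, and the Riccati bound gives, for $t\in[0,T]$,
\[
\|Dg^{t}\dot c(r)\|\le\|\dot c(r)\|\,e^{-\int_{0}^{t}\lambda(g^{\sigma}c(r))\,d\sigma}\le\|\dot c(r)\|\,e^{-\int_{0}^{t}(\lambda(g^{\sigma}v)-\eta/2)\,d\sigma}\le\|\dot c(r)\|\,e^{-\frac{\eta}{2}t},
\]
the last step using $\int_{0}^{t}\lambda(g^{\sigma}v)\,d\sigma\ge\eta t$ from $(v,T)\in\GGG(\eta)$. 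Integrating over $r$ yields $d^{s}(g^{t}w,g^{t}w')\le\mathrm{Length}(g^{t}\circ c)\le d^{s}(w,w')\,e^{-\frac{\eta}{2}t}$. The unstable estimate is the mirror image: if $g^{T}w,g^{T}w'\in W_{g}^{u}(g^{T}v,\delta)$, run the same argument for the flow $g^{-t}$ based at $g^{T}v$ over the interval of length $T-t$, the relevant averages now being $\int_{0}^{\tau}\lambda(g^{-s}g^{T}v)\,ds\ge\eta\tau$, which is the second clause in the definition of $\GGG(\eta)$.

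The main obstacle is not any single estimate but the bookkeeping in the contraction step: one must pass from an \emph{average} lower bound for $\lambda$ along $\gamma_{v}$ to an average lower bound along every nearby stable (resp.\ unstable) geodesic segment, keeping the connecting curve and all of its forward (resp.\ backward) images inside the region where \eqref{smalldelta} controls the oscillation of $\lambda$, and then combine this with the pointwise Riccati inequality for $\|Dg^{\sigma}|_{E^{s/u}}\|$ to get the uniform exponent $\eta/2$. This is carried out in detail in \cite[\S3]{BCFT}, the lemma being \cite[Corollary~3.11]{BCFT}; we have only indicated the structure here.
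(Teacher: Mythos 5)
The paper does not prove this lemma; it simply cites it as \cite[Corollary 3.11]{BCFT}, so there is no in-paper argument to compare against. Your sketch is a faithful reconstruction of what the BCFT proof does: the first assertion comes from uniform continuity of $\lambda$ enforced by \eqref{smalldelta} together with triangle-inequality bookkeeping along the forward and backward orbit, and the two contraction estimates come from a Riccati comparison for stable (resp.\ unstable) Jacobi/tangent vectors, again using \eqref{smalldelta} to transfer the integral lower bound on $\lambda$ from $\gamma_v$ to the nearby curve.

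Two small remarks, neither fatal. First, the inequality $\langle\UUU^s_{u}\xi,\xi\rangle \le -\lambda^s(u)\|\xi\|^2$ you use is consistent with the BCFT convention $\lambda^s(v)=\lambda^u(-v)$ (the minimal eigenvalue of $-\UUU^s$); the paper's displayed formula $\lambda^s(v)=-\lambda^u(-v)$ appears to carry a sign typo, since with it $\lambda\le 0$ everywhere and $\GGG(\eta)$ would be empty. Second, the step you flag yourself — keeping the whole connecting curve $c(r)$, and all its forward images $g^\sigma c(r)$, inside the region where $|\lambda(\cdot)-\lambda(g^\sigma v)|\le \eta/2$ — does need a word: with $w,w'\in W_g^s(v,\delta)$ a length-minimizing curve from $w$ to $w'$ can a priori reach $d^s$-distance up to about $3\delta$ from $v$, so one either shrinks $\delta$ by a fixed factor beforehand, or runs the contraction argument for curves emanating radially from $v$ and combines, or observes that $e^\Lambda$ is already larger than the needed constant. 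This is the single piece of bookkeeping your proof leaves implicit, and it is exactly the kind of constant-juggling that \cite{BCFT} carries out in full; the mathematical content of your sketch is correct.
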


\begin{definition}\label{LPSdef}
The foliations $W_g^u$, $W_g^{cs}$ have local product structure (LPS) with constant $\kappa\ge 1$ in a $\delta$-neighborhood of $v\in SM$ if for every $\epsilon\in (0,\delta]$ and all $w_1, w_2\in B(v, \epsilon)$, the intersection $W_g^u(w_1, \kappa \epsilon) \cap W_g^{cs}(w_2, \kappa \epsilon)$ contains a single point, which we denote by the Smale bracket $[w_1, w_2]_1$, and if moreover we have
\begin{equation*}
    \begin{aligned}
d_g^u(w_1, [w_1, w_2]_1) &\le \kappa d(w_1, w_2),\\
d_g^{cs}(w_2, [w_1, w_2]_1) &\le \kappa d(w_1, w_2).
\end{aligned}
\end{equation*}
We can define analogously local product structure (LPS) for $W_g^s$, $W_g^{cu}$ with Smale bracket $[\cdot, \cdot]_2$.
\end{definition}

\begin{lemma}(\cite[Lemma 4.4]{BCFT})\label{LPS}
For every $\eta>0$, there exist $\delta>0$ and $\kappa\ge 1$ such that at every $v\in \text{Reg}(\eta)$, the foliations $W_g^u, W_g^{cs}$ as well as  $W_g^s$, $W_g^{cu}$ have LPS with constant $\kappa$ in a $\delta$-neighborhood of $v$.
\end{lemma}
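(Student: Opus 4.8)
The plan is to reduce the lemma to a single quantitative input — a uniform lower bound, valid along the compact set $\text{Reg}(\eta)=\lambda^{-1}([\eta,\infty))$, for the angle between the unstable and weak stable distributions (and symmetrically between the stable and weak unstable ones) — and then to produce the Smale brackets and their Lipschitz-type bounds by a standard transverse-intersection argument, upgrading the pointwise constants to uniform $\delta,\kappa$ by compactness.

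First I would establish the transversality estimate. The hypothesis $\lambda(v)\ge\eta$ is exactly $\lambda^u(v)\ge\eta$ and $\lambda^s(v)\ge\eta$, i.e. the symmetric horosphere operators satisfy $\UUU_v^u\ge\eta\,\mathrm{Id}$ and $-\UUU_v^s\ge\eta\,\mathrm{Id}$ on $T_{\bar\pi v}H$, whence $\UUU_v^u-\UUU_v^s\ge 2\eta\,\mathrm{Id}$. Writing $E^u(v)$ and $E^s(v)$, as is standard in this setting, as graphs of $\UUU_v^u$ and $\UUU_v^s$ over the horosphere tangent space, the uniform positive-definiteness of the difference forces $E^u(v)\cap E^s(v)=\{0\}$; a dimension count ($\dim E^u=n-1$, $\dim E^{cs}=n$, $\dim SM=2n-1$) then gives $E^u(v)\oplus E^{cs}(v)=T_vSM$, and the spectral gap $\UUU_v^u-\UUU_v^s\ge 2\eta\,\mathrm{Id}$ together with the uniform bound $\|\UUU_v^{u/s}\|\le\Lambda$ should yield $\angle(E^u(v),E^{cs}(v))\ge\alpha$ for a constant $\alpha=\alpha(\eta,\Lambda)>0$ independent of $v\in\text{Reg}(\eta)$; the estimate $\angle(E^s(v),E^{cu}(v))\ge\alpha$ then follows symmetrically by reversing time.

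Next, since the distributions $E^u,E^s,E^{cs},E^{cu}$ are continuous on the compact manifold $SM$, hence uniformly continuous, I would fix a uniform $\delta_0=\delta_0(\eta)>0$ for which these distributions remain $\alpha/2$-transverse throughout the $\delta_0$-ball around any point of $\text{Reg}(\eta)$. Then, for $v\in\text{Reg}(\eta)$, $\epsilon\le\delta_0$ and $w_1,w_2\in B(v,\epsilon)$, the local leaves $W_g^u(w_1,\cdot)$ and $W_g^{cs}(w_2,\cdot)$ are $C^1$ submanifolds of complementary dimension whose tangent spaces lie within $\alpha/2$ of $E^u$ and $E^{cs}$ respectively, so by the inverse function theorem their intersection is transverse and — after a size estimate controlled solely by $\alpha$ — consists of a single point inside the $\kappa\epsilon$-balls for a constant $\kappa=\kappa(\alpha)\ge 1$; reading the distances off the angle bound gives $d_g^u(w_1,[w_1,w_2]_1)\le\kappa\,d(w_1,w_2)$ and $d_g^{cs}(w_2,[w_1,w_2]_1)\le\kappa\,d(w_1,w_2)$, and the pair $W_g^s,W_g^{cu}$ with bracket $[\cdot,\cdot]_2$ is handled identically. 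Since $\alpha$, $\Lambda$, $\delta_0$ and the modulus of continuity of the distributions are all uniform over $\text{Reg}(\eta)$, taking $\delta:=\delta_0$ and $\kappa$ as above (or, if one prefers, passing to a finite subcover of $\text{Reg}(\eta)$ and keeping the worst constants) finishes the argument.

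The step I expect to be the main obstacle is this quantitative transversality: translating the spectral hypothesis $\lambda(v)\ge\eta$ into an honest, $v$-uniform lower bound $\alpha(\eta)$ on $\angle(E^u,E^{cs})$ that also survives perturbing $v$ within $\text{Reg}(\eta)$. This is precisely where the bunched/bounded-curvature assumptions enter — to bound $\UUU_v^{u/s}$ from above (hence to control the ``slopes'' of the two graphs) and to guarantee that the horospherical foliations vary with a curvature-controlled modulus of continuity, so that the inverse-function/graph-transform estimate producing the bracket does not degenerate as $v$ ranges over $\text{Reg}(\eta)$. Once the angle bound is secured, the construction of $[\cdot,\cdot]_1,[\cdot,\cdot]_2$ and their Lipschitz bounds is routine.
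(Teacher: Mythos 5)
The paper does not prove this lemma; it is quoted verbatim from \cite[Lemma 4.4]{BCFT}, and your argument --- positive definiteness of $\UUU^u_v-\UUU^s_v\ge 2\eta\,\mathrm{Id}$ on the compact set $\text{Reg}(\eta)$ giving uniform transversality of $E^u$ and $E^{cs}$ (and of $E^s$ and $E^{cu}$), then continuity of the invariant distributions and a graph/transverse-intersection argument yielding the brackets with uniform $\delta$ and $\kappa$ --- is essentially the proof given in that reference. One minor correction: the bunched-curvature hypothesis plays no role here (the present paper uses it only in Lemma \ref{lemmaholonomy}); the bound $\|\UUU^{u/s}_v\|\le\Lambda$ and the continuity of the horospherical distributions already follow from compactness of $M$ and nonpositive curvature.
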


\subsubsection{Decomposition and horospherical translations for frame flows}
We can define a decomposition of orbit segments on $FM$ in the following way. 
\begin{definition}\label{decom1}
Given $(x,t)\in FM\times \RR^+$, then $(\pi(x), t)\in  SM\times \RR^+$ has a $(\PPP, \GGG, \SSS)$-decomposition according to Definition \ref{decom}. We define  $(\tilde \PPP, \tilde\GGG, \tilde\SSS)$-decomposition of $(x,t)$ as the unique lift of  $(\PPP, \GGG, \SSS)$-decomposition of $(\pi(x), t)$.
\end{definition}

In negative curvature case, the (strong) stable/unstable foliations of the frame flow are defined via the (strong) stable/unstable foliations of the geodesic flow by Brin \cite{Brin80}. More precisely, if $x\in FM$, then the stable leaf $W_F^s(x)$ sits above the stable leaf $W_g^s(\pi x)$ of the geodesic flow, and $y\in W_F^s(x)$ if $\pi(y)\in W_g^s(\pi x)$ and $d(F^ty, F^tx)\to 0$ as $t\to \infty$.
Given $v,w\in SM$ and $w\in W^s_g(v)$, the \emph{stable horospherical translation} $p^s(v,w): F_vM\to F_wM$ is defined as 
$$p^s(v,w)(x):=y\in \pi^{-1}(w)\cap W_F^s(x).$$
The lifting of $p^s$ to $FX$ is still called stable horospherical translation. Then the horospherical translation is an isometry between $F_vM$ and $F_wM$ and one can think of $p^s(v,w)(x)$ as the result of parallel translating $x$ along $\gamma_{v}(t)$ out to the boundary at infinity of $X$ and then back to $w$ along $\gamma_w(t)$. 

In nonpositive curvature, Constantine \cite{Con} defined stable horospherical translation via Brin's method when $w\in W^s_g(v)$ and $d(g^tv, g^tw)$ approaches to zero exponentially fast as $t\to \infty$.
\begin{lemma}(\cite[Proposition 2.1]{Con})\label{good}
Let $x\in FM$ and $v=\pi (x)\in SM$. Let $w\in W^s_g(v)$ (resp. $w\in W^u_g(v))$ and $d(g^tv, g^tw)$ approaches to zero exponentially fast as $t\to \infty$ (resp. $t\to -\infty$). Then there exists a unique $y\in \pi^{-1}(w)$ such
that the distance between $F^t(x)$ and $F^t(y)$ goes to zero as $t\to \infty$ (resp. $t\to -\infty$). We define horospherical translation $p^s(v,w): F_vM\to F_wM$ taking $x$ to $y$.
\end{lemma}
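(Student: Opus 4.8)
The plan is to adapt Brin's construction of horospherical frame translations from the negative-curvature setting, the only genuinely new ingredient being that the uniform exponential contraction along stable leaves of the geodesic flow — unavailable in nonpositive curvature — is replaced by the standing hypothesis that $d(g^tv,g^tw)\to 0$ exponentially fast, which is precisely the input needed to control the relevant parallel transport. We treat the stable case $w\in W^s_g(v)$, the unstable case being identical after reversing time. For each large $T>0$ let $c_T$ be the (unique, short) minimizing geodesic segment from $\gamma_v(T)$ to $\gamma_w(T)$; it exists and is unique once $T$ is large, since $d(\gamma_v(T),\gamma_w(T))\to 0$ and nonpositive curvature forbids conjugate points. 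Starting from $x\in\pi^{-1}(v)$, parallel transport $x$ along $\gamma_v$ from time $0$ to time $T$, then along $c_T$, then backward along $\gamma_w$ from time $T$ to time $0$; this yields a frame $\hat y_T$ based at $\bar\pi(w)$ whose first vector tends to $w=\dot\gamma_w(0)$ as $T\to\infty$ because $\gamma_v$ and $\gamma_w$ are asymptotic.

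The crux is the convergence of $\{\hat y_T\}$. For $T'<T$, the frames $\hat y_T$ and $\hat y_{T'}$ differ by the parallel holonomy around the closed loop formed by $\gamma_v|_{[T',T]}$, $c_T$, $\gamma_w|_{[T',T]}$ reversed, and $c_{T'}$ reversed. Filling this loop by the ruled surface $\Sigma_{T',T}$ swept out by the minimizing segments from $\gamma_v(t)$ to $\gamma_w(t)$ for $t\in[T',T]$, an Ambrose--Singer (Gauss--Bonnet type) estimate bounds the holonomy by a constant multiple of $\int_{\Sigma_{T',T}}|K|\,dA$, which by bunched (in particular bounded) curvature and the width bound $d(\gamma_v(t),\gamma_w(t))\asymp d(g^tv,g^tw)$ is at most $C\int_{T'}^{\infty} d(g^tv,g^tw)\,dt$. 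The exponential-decay hypothesis makes this tail integral vanish as $T'\to\infty$, so $\{\hat y_T\}$ is Cauchy; the limit $y:=\lim_{T\to\infty}\hat y_T$ has first vector $w$, i.e. $y\in\pi^{-1}(w)$. Running the same estimate on the interval $[t,\infty)$ in place of $[T',\infty)$ shows $d(F^tx,F^ty)\to 0$, which is the asserted property.

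Uniqueness is immediate: if $y,y'\in\pi^{-1}(w)$ both satisfy $d(F^tx,F^t\,\cdot\,)\to 0$, then $d(F^ty,F^ty')\to 0$; but $F^t$ acts isometrically along the fibers of $\pi$ and $\pi(y)=\pi(y')=w$, so $d_{F_{g^tw}M}(F^ty,F^ty')=d_{F_wM}(y,y')$ does not depend on $t$, hence is zero, hence $y=y'$. We then define $p^s(v,w)(x):=y$. The main obstacle is the holonomy-versus-curvature estimate above: producing it in a form in which the exponential smallness of $d(g^tv,g^tw)$ makes the curvature integral converge, and controlling the small angular discrepancy between the transported frame's first vector and $w$ so that the limit genuinely lies in $\pi^{-1}(w)$. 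Everything else — existence of the short connecting geodesics, the isometric action of $F^t$ along fibers — is standard in nonpositive curvature.
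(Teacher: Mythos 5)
The paper does not supply its own proof of this lemma; it is stated as a direct citation of Constantine's Proposition~2.1, and the actual argument lives in that reference. Your reconstruction is nonetheless correct and is, as far as I can tell, the standard Brin-style construction that Constantine adapts: build candidate frames $\hat y_T$ by parallel transporting out along $\gamma_v$, across a short connecting geodesic, and back along $\gamma_w$; show the family is Cauchy by filling the difference loop with a ruled surface and bounding the holonomy by a curvature integral; use the exponential decay of $d(g^tv,g^tw)$ to make that integral converge; and get uniqueness from the isometric fiber action. This is also exactly the machinery the paper itself redevelops in Lemmas~\ref{lemmaholonomy} and \ref{lemmaunstableexpansion}, so your argument is consistent in spirit with the rest of the section.

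Two small remarks. First, your parenthetical ``bunched (in particular bounded)'' is a little misleading: the bunched condition does not imply boundedness, and in fact Constantine's lemma does not require the bunched hypothesis at all. What you actually need for the holonomy estimate is that $\|R\|$ is uniformly bounded, which holds simply because $M$ is compact; the bunched condition only enters later in the paper (Lemma~\ref{lemmaholonomy}) when one wants the sharper bound in terms of the sectional curvature of the disk itself rather than the sup norm of the curvature tensor. For the present lemma the crude bound $\int_\Sigma \|R\|\,dA \le C\,\mathrm{Area}(\Sigma)$ suffices. Second, when you pass from ``$\{\hat y_T\}$ Cauchy in $F_{\bar\pi(w)}M$'' to ``limit lies in $\pi^{-1}(w)$'', you are tacitly using that $\pi$ is continuous together with the separate observation that the first vector of $\hat y_T$ converges to $w$; you flag this correctly, and since $F_{\bar\pi(w)}M$ is compact the limit exists and inherits first vector $w$. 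Both points are fine; the proof is sound and matches the cited source's approach.
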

Note that there is a bunch of vectors on a same stable leaf on $SM$ but their forward orbits do not approach to zero exponentially fast. Nevertheless, we can define stable/unstable horospherical translation $p^s/p^u$ for a finite orbit segment $(v,t)\in \GGG(\eta)$.

\begin{lemma}\label{stablemanifold}
Given $\eta>0$, let $\delta=\delta(\eta)$ be as in \eqref{smalldelta} and $(v,T)\in \GGG(\eta)$.
Let $x\in FM$ and $v=\pi (x)\in SM$. If $w\in W^s_g(v,\delta)$, then there exists $y\in \pi^{-1}(w)$ such that for $0\le t\le T$,
$$d(F^ty, F^tx)\le Cd^s(v,w)e^{-\frac{\eta}{2}t}$$
for some universal $C>0$. In this way, we define the \emph{stable horospherical translation} $p^s(v,w)(x)=y$.

Analogously, if $w\in g^{-T}W_g^u(g^Tv,\delta)$, then there exists $z\in \pi^{-1}(w)$ for every $0\le t\le T$, 
$$d(F^tz, F^tx)\le Cd^u(F^Tv,F^Tw)e^{-\frac{\eta}{2}(T-t)}$$
for some universal $C>0$. We define the \emph{unstable horospherical translation} as $p^u(v,w)(x)=y$.
\end{lemma}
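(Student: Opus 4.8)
The plan is to imitate, in the universal cover $FX\to SX\to X$, Brin's recipe for horospherical translation---parallel transport $x$ along $\gamma_v$, jump across to $\gamma_w$, and come back along $\gamma_w$---but \emph{truncated at time $T$}, since $(v,T)\in\GGG(\eta)$ only controls the pair $(v,w)$ on the interval $[0,T]$ and Constantine's limiting construction in Lemma~\ref{good} (which needs exponential decay as $t\to\infty$) is consequently unavailable. Because parallel transport is equivariant under deck transformations, a translation built upstairs descends to $FM$, so it suffices to work in $X$; write $c_t:=[\gamma_v(t),\gamma_w(t)]$ for the connecting geodesic, $P_{c_t}$ for parallel transport along it, and recall that $M$ being compact forces $\|R\|\le b$ for a constant $b$. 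The geometric input is Lemma~\ref{stablegeo}: for $(v,T)\in\GGG(\eta)$ and $w\in W_g^s(v,\delta)$ we have $d^s(g^tv,g^tw)\le d^s(v,w)e^{-\frac\eta2 t}$, hence also $d_{SM}(g^tv,g^tw)\le d^s(v,w)e^{-\frac\eta2 t}$, for $0\le t\le T$; in particular both the footpoint distance $d_X(\gamma_v(t),\gamma_w(t))$ and the angle between $\dot\gamma_w(t)$ and the $c_t$-parallel transport of $\dot\gamma_v(t)$ are bounded by a constant times $d^s(v,w)e^{-\frac\eta2 t}$.

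I would then construct $y$ as follows. Let $\Theta$ be the rotation of $T_{\gamma_w(T)}X$ carrying $P_{c_T}(\dot\gamma_v(T))$ onto $\dot\gamma_w(T)$; by the angle estimate above $\Theta$ lies within $O\!\bigl(d^s(v,w)e^{-\frac\eta2 T}\bigr)$ of the identity. Then $\Theta\bigl(P_{c_T}(F^Tx)\bigr)$ is a positively oriented orthonormal frame over $g^Tw$, so $y:=F^{-T}\bigl(\Theta(P_{c_T}(F^Tx))\bigr)$ lies in $\pi^{-1}(w)$, and we set $p^s(v,w)(x):=y$; by construction $F^ty=F^{t-T}\bigl(\Theta(P_{c_T}(F^Tx))\bigr)$ for $t\in[0,T]$.

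For the estimate, fix $t\in[0,T]$ and bound $d(F^ty,F^tx)$ by the footpoint distance in $SX$---which is $\le d^s(v,w)e^{-\frac\eta2 t}$---plus the fibrewise discrepancy measured after identifying $F_{g^tv}X$ with $F_{g^tw}X$ via $P_{c_t}$ together with the corresponding small direction correction. That fibrewise discrepancy is the holonomy around the thin geodesic quadrilateral with sides $\gamma_v|_{[t,T]}$, $c_T$, $\gamma_w|_{[T,t]}$, $c_t$, whose norm is at most $b$ times the area of a ruled surface filling it; since that area is bounded by a constant times $\int_t^T d_X(\gamma_v(s),\gamma_w(s))\,ds\le\tfrac2\eta d^s(v,w)e^{-\frac\eta2 t}$, and the correction $\Theta$ adds only a further $O\!\bigl(d^s(v,w)e^{-\frac\eta2 t}\bigr)$ (parallel transport being an isometry, so it does not amplify this initial error), summing gives $d(F^ty,F^tx)\le C\,d^s(v,w)e^{-\frac\eta2 t}$ with $C=C(\eta,b)$. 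The unstable statement is the time-reversed argument, using the backward-contraction clause in the definition of $\GGG(\eta)$ and the second inequality of Lemma~\ref{stablegeo}.

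The main obstacle is the uniformity in $t$: one must check that truncating Brin's construction at the finite time $T$ still produces a genuine frame over $w$ and that the errors from the direction correction and from the path-dependence of parallel transport both decay like $e^{-\frac\eta2 t}$, which is exactly where the exponential narrowing of the quadrilateral supplied by Lemma~\ref{stablegeo} is indispensable. Everything else---the comparability of $d$, $d^s$, $d^u$ and the $FM$-metric near the diagonal, and the fact that $F^t$ acts as a fibrewise isometry of a bundle with compact fibre---is routine bookkeeping.
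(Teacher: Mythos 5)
Your construction of the translated frame is essentially the paper's, but your estimate is obtained by a genuinely different mechanism. The paper discretizes $[0,T]$ into steps of length $a$, defines the intermediate translates $x_t$ (parallel transport across the connecting geodesic at each time), compares consecutive ones via the triangle inequality and the fact that $F^a$ is Lipschitz with constant close to $1$, and sums a geometric series to get $d(x_{t_i},y)\le C_2(a)\,d^s(v,w)e^{-\frac{\eta}{2}t_i}$. You instead control the fibrewise discrepancy in one shot as the holonomy of the thin geodesic quadrilateral spanned by $\gamma_v|_{[t,T]}$, $c_T$, $\gamma_w|_{[T,t]}$, $c_t$, bounded by $\|R\|$ times the area of the ruled filling surface, which you estimate directly by $\int_t^T d_X(\gamma_v(s),\gamma_w(s))\,ds\le\frac{2}{\eta}d^s(v,w)e^{-\frac{\eta}{2}t}$ (Jacobi fields across the ruling have norm at most one in nonpositive curvature). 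Both routes are correct; yours is conceptually closer to the Gauss--Bonnet argument the paper deploys later in Lemma~\ref{lemmaholonomy}/\ref{lemmaunstableexpansion}, while the paper's telescoping is more elementary and avoids having to justify the area-to-holonomy inequality. Two small points worth noting. First, you make explicit a correction rotation $\Theta$ aligning $P_{c_T}(\dot\gamma_v(T))$ with $\dot\gamma_w(T)$, which the paper's statement ``$x_t\in F_wM$ such that $F^tx_t$ is the parallel translate of $F^tx$'' quietly elides (the raw parallel translate has first vector $P_{c_t}(g^tv)\neq g^tw$); your bookkeeping is the more careful one here. Second, your holonomy bound uses only $\|R\|\le b$ and does not need the bunched-curvature hypothesis, which is consistent with the fact that the paper's proof of this lemma does not invoke bunching either (bunching is used only later, where the curvature integral must be expressed through a single sectional curvature rather than the full tensor).
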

\begin{proof}
We only prove the statement for stable manifold and the one for unstable manifold is analogous. Let $w\in W^s_g(v,\delta)$. For any  $0\le t\le T$, we know by Lemma \ref{stablegeo} that $d^s(g^tv, g^tw)\le d^s(v,w)e^{-\frac{\eta}{2}t}$. We denote by $x_t\in F_{w}M$ such that $F^tx_t$ is the parallel translate of the frame $F^tx$ along the geodesic from $\pi (g^tv)$ to $\pi (g^tw)$. Since the parallel translations on frames are smooth actions, we have
$$d(F^tx_t, F^tx)\le D_1d(\pi (g^tv), \pi (g^tw))\le D_1d^s(g^tv, g^tw)\le D_1d^s(v,w)e^{-\frac{\eta}{2}t}$$
for some $D_1>1$ very close to $1$.
Denote $y=x_T$. Recall that the frame flow $F^t$ acts isometrically on the fiber $\pi^{-1}w$. Let $a>0$ be small enough and such that $T=na$ for some integer $n$. Then $\|dF^a\|\le C_1$ for some $C_1>1$ close to $1$. We have 
\begin{equation*}
    \begin{aligned}
d(x_t, x_{t+a})&= d(F^{t+a}x_{t}, F^{t+a}x)+d(F^{t+a}x, F^{t+a}x_{t+a})\\
&\le C_1d(F^tx_t, F^tx)+ d^s(v,w)e^{-\frac{\eta}{2}(t+a)}\\
%&\le  d(F^Tx, F^Tx_t)+d(F^Tx, F^Ty)+ D_1d^s(v,w)e^{-\frac{\eta}{2}t}\\
&\le C_1D_1d^s(v,w)e^{-\frac{\eta}{2}t}+ D_1d^s(v,w)e^{-\frac{\eta}{2}(t+a)}\\
&\le 2d^s(v,w)e^{-\frac{\eta}{2}t}.
\end{aligned}
\end{equation*}
Let $t_j=ja$ and then $T=t_n$. As a consequence, for each $0\le i\le n$
\begin{equation*}
    \begin{aligned}
d(x_{t_i}, y)\le \sum_{j=i}^{n-1}d(x_{t_j},x_{t_{j+1}})\le 2d^s(v,w)e^{-\frac{\eta}{2}t_i}/(1-e^{-\eta a/2})=2C_2(a)d^s(v,w)e^{-\frac{\eta}{2}t_i}
\end{aligned}
\end{equation*}
where $C_2(a)$ is a constant depending on $a$.
For any $0\le t\le T$, suppose that $t_{i}\le t<t_{i+1}$. Then we have
\begin{equation*}
    \begin{aligned}
&d(F^{t}x, F^{t}y)\le C_1d(F^{t_i}x, F^{t_i}y)\\
\le &C_1d(F^{t_i}x, F^{t_i}x_{t_i})+C_1d(F^{t_i}x_{t_i}, F^{t_i}y)\\
\le &C_1D_1d^s(v,w)e^{-\frac{\eta}{2}t_i}+C_1d(x_{t_i}, y)\\
\le &C_1D_1d^s(v,w)e^{-\frac{\eta}{2}t_i}+2C_2(a)d^s(v,w)e^{-\frac{\eta}{2}t_i}\\
=&Cd^s(v,w)e^{-\frac{\eta}{2}t}
\end{aligned}
\end{equation*}
for some constant $C>1$ depending on $a$. We are done.
\end{proof}

Let $x\in FM$ and $v=\pi(x)$. If $(v,T)\in \GGG(\eta)$, we denote 
$$\hat W^s_F(x, T, \delta):=\{p^s(v,w)(x): w\in W^s_g(v,\delta)\}$$
and analogously
$$\hat W^u_F(F^Tx, T, \delta):=F^T\{p^u(v,w)(x): w\in g^{-T}W^u_g(g^Tv,\delta)\}$$
where $p^{s/u}$ are horospherecal translations defined in Lemma \ref{stablemanifold}.
By smoothness of the parallel translations on frames, we know that $\hat W^s_F(x, T, \delta)$ and $\hat W^u_F(F^Tx, T, \delta)$ are smooth embedded submanifolds of dimension $n-1$. Then we define center stable/unstable manifolds as
$$\hat W^{cs}_F(x, T, \delta):=\{z: \pi(z)=\pi(F^ty) \text{\ for some\ } y\in \hat W^{s}_F(x, T, \delta), t\in \RR\}$$
and analogously
$$\hat W^{cu}_F(F^Tx, T, \delta):=\{z: \pi(z)=\pi(F^ty) \text{\ for some\ } y\in \hat W^u_F(F^Tx, T, \delta), t\in \RR\}.$$
They are submanifolds of $FM$ of codimension $n-1$.

\subsubsection{Backward non-expansion in center unstable direction}
We start with a geometric characterization of the upper bound over the distance between a vector and its parallel transport over a null-homotopic smooth curve. The bound involves the integral of curvature tensor in the homotopy, and this is where (and in fact the only place) bunched curvature condition over $M$ is used. Details are stated in the following lemma. 
\begin{lemma} \label{lemmaholonomy}
    Let $S\subset M$ be a smoothly immersed disk with smooth boundary $\partial S$. Then there exists a constant $C_B'>1$ such that for all $p\in \partial S$ and $v\in S_pM$, we have
    $$
d(P_{\partial S}v,v)\leq \int_S C_B'|K(q)|dA(q)
    $$
    where $P_{\partial S}$ is the parallel transport over $\partial S$ and $K(q)$ is the sectional curvature of $T_qS$ for each $q\in S$.
\end{lemma}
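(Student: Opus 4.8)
The plan is to write $P_{\partial S}$ as an ordered product of holonomies around the cells of a fine triangulation of $S$, estimate each cell holonomy by the integral of the curvature tensor over that cell, and pass to the limit as the mesh tends to zero. Regard $S$ as $\Phi(D)$ for a smooth immersion $\Phi\colon D\to M$ of the standard $2$-disk with $\Phi|_{\partial D}$ parametrizing $\partial S$, so that $\int_S(\,\cdot\,)\,dA=\int_D\Phi^{*}\bigl((\,\cdot\,)\text{ area form}\bigr)$, and fix $p_0\in\partial D$ with $\Phi(p_0)=p$. For a triangulation $\{\sigma_i\}_{i=1}^{N}$ of $D$ of mesh $\rho$, pick for each $i$ a vertex $q_i$ and an edge-path $\beta_i$ in the $1$-skeleton from $p_0$ to $q_i$. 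A standard combinatorial fact --- one may repeatedly remove a boundary triangle, which alters the boundary loop only by a cell-boundary detour up to backtracks along interior edges, and backtracks do not change holonomy --- gives
\[
P_{\partial S}=g_N\circ\cdots\circ g_1,\qquad g_i=A_i\,h_i\,A_i^{-1},
\]
where $h_i$ is the $M$-parallel transport around the loop $\Phi(\partial\sigma_i)$ based at $\Phi(q_i)$ and $A_i$ is parallel transport along $\Phi(\beta_i)$. Each $g_i$ is a linear isometry of $T_pM$, so $\|P_{\partial S}-\mathrm{Id}\|_{\mathrm{op}}\le\sum_i\|g_i-\mathrm{Id}\|_{\mathrm{op}}=\sum_i\|h_i-\mathrm{Id}\|_{\mathrm{op}}$, and $d(P_{\partial S}v,v)\le C_0\|P_{\partial S}-\mathrm{Id}\|_{\mathrm{op}}$ for every $v\in S_pM$, with $C_0$ depending only on the metric on $SM$.

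Next, estimate each cell holonomy, using the classical fact that curvature is the infinitesimal holonomy: solving the parallel-transport ODE around $\Phi(\partial\sigma_i)$ and Taylor expanding, while using that $M$ is closed (so its metric and connection have bounded geometry) and that $\Phi$ is $C^2$, one obtains
\[
\|h_i-\mathrm{Id}\|_{\mathrm{op}}\le\int_{\sigma_i}\|R(\partial_u\Phi,\partial_v\Phi)\|_{\mathrm{op}}\,du\,dv+o(|\sigma_i|)=\int_{\Phi(\sigma_i)}\|R(e_1,e_2)\|_{\mathrm{op}}\,dA+o(|\sigma_i|),
\]
where $e_1,e_2$ is an orthonormal basis of the $2$-plane $T_qS$ and the error term is uniform in $i$ and tends to $0$ with $\rho$. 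Summing over $i$ and letting $\rho\to0$, the Riemann sums converge and $d(P_{\partial S}v,v)\le C_0\int_S\|R_q(e_1,e_2)\|_{\mathrm{op}}\,dA(q)$.

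It remains to bound $\|R_q(e_1,e_2)\|_{\mathrm{op}}$ by $|K(q)|$ up to a constant, and this is the sole place the bunched curvature hypothesis enters. The operator $R_q(e_1,e_2)$ is skew-symmetric, so $\|R_q(e_1,e_2)\|_{\mathrm{op}}=\sup_{\|Z\|=\|W\|=1}|\langle R_q(e_1,e_2)Z,W\rangle|$, which by the standard algebraic estimate of a curvature tensor in terms of its sectional curvatures is at most $C_n\max\{|K(\Pi)|:\Pi\subset T_qM\text{ is a plane}\}$ for a dimensional constant $C_n$. If $K(q)=K(T_qS)=0$, then since $K\le0$ the bunched condition forces every sectional curvature at $q$ to vanish, so the integrand is $0$; otherwise the bunched condition gives $|K(\Pi)|\le C_B|K(T_qS)|=C_B|K(q)|$ for every plane $\Pi$ at $q$. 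Hence $\|R_q(e_1,e_2)\|_{\mathrm{op}}\le C_nC_B|K(q)|$, and taking $C_B':=\max\{1,C_0C_nC_B\}+1$ yields $d(P_{\partial S}v,v)\le\int_S C_B'|K(q)|\,dA(q)$.

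The main obstacle is the first two steps taken together: making precise the decomposition of $P_{\partial S}$ into an ordered product of conjugated cell holonomies, and then proving the per-cell estimate with an error that is $o(\text{area})$ \emph{uniformly} in the cell. This uniformity is exactly what lets the $N\sim\rho^{-2}$ error terms of size $o(\rho^{2})$ sum to a quantity vanishing as $\rho\to0$, and it is where the bounded geometry of the closed manifold $M$ and the $C^2$-regularity of $\Phi$ are used. Both ingredients are classical --- they underlie the non-abelian Stokes theorem and the Ambrose--Singer theorem --- so the remaining work is bookkeeping rather than a new idea.
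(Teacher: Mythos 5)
Your proof is correct, but it follows a genuinely different route from the paper's. You use the discrete non-abelian Stokes approach: triangulate the immersed disk, conjugate the per-cell holonomies back to the base point so that $P_{\partial S}$ factors as an ordered product, estimate each cell holonomy by the curvature integral over that cell up to an $o(\mathrm{area})$ error, and pass to the limit as the mesh shrinks. The paper instead argues analytically in one stroke: it builds a smooth homotopy $C(s,t)$ from the constant loop at $p$ to the boundary loop, extends $v$ and a test vector $w$ to parallel sections $v(s,t), w(s,t)$ (parallel in $s$, with boundary conditions at $s=1$ and $s=0$ respectively), and computes $\langle P_{\partial S}v-v,w\rangle$ by two applications of the fundamental theorem of calculus; the mixed second covariant derivative that appears is exactly $R(\partial_t C,\partial_s C)v$, yielding $\langle P_{\partial S}v-v,w\rangle=\int_0^1\!\int_0^1\langle R(\partial_t C,\partial_s C)v,w\rangle\,ds\,dt$ directly. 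The paper's method thus bypasses entirely the combinatorial bookkeeping and the uniform-in-cell error control that you correctly flag as the outstanding work in your version, while both proofs close with the same algebraic step: bound the curvature tensor by the maximal sectional curvature (a universal multilinear-algebra fact) and then by $C_B|K(q)|$ via the bunched-curvature hypothesis. One small caution in your last step: the bunched condition is stated with the interval $[C_B^{-1}K,\,C_B K]$, which for nonpositive $K$ must be read as an unordered pair of endpoints; your interpretation $|K(\Pi)|\le C_B|K(T_qS)|$ (together with the observation that $K(T_qS)=0$ forces all sectional curvatures at $q$ to vanish) is the intended one, but it is worth stating explicitly since the displayed interval is "backwards" when $K<0$.
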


Such a bound can be considered as a high-dimensional generalization of Gauss-Bonnet, and the idea is implicitly explained in the literature, see for instance \cite[page 247]{BM94}. For self-containment, we will give a brief proof as follows.
\begin{proof}[Proof of Lemma \ref{lemmaholonomy}]
    Fix any $p$ and $v$ as in the statement of the lemma, and parametrize $\partial S$ by a smooth curve $c:[0,1]\to M$ with $c(0)=c(1)=p$ with parameter $t$. By our assumption on $S$, we are able to build a smooth homotopy $C:[0,1]\times [0,1]\to M$ 
    with parameters $(s,t)$  satisfying
    $$
C(0,t)=p,\quad C(1,t)=c(t), \quad C(s,0)=C(s,1)=p.
    $$
    Let $v(t)\in S_{c(t)}M$ be the parallel transport of $v$ along $c[0,t]$ for each $t\in [0,1]$. By parallel transporting $v(t)$ in $s$-direction, we are able to construct a smooth section $v(s,t)\in S_{c(s,t)}M$ satisfying
    $$
v(1,t)=v(t), \quad \nabla_{\partial _s}v(s,t)=0 \text{ for all }(s,t)\in [0,1]\times [0,1].
    $$
    Fix any $w\in S_pM$, we will evaluate $d(P_{\partial S}v,v)$ by considering $\langle P_{\partial S}v-v, w \rangle$, with $w$ varying in $S_pM$. Similar to the construction of $v(s,t)$, we construct $w(s,t)\in S_{c(s,t)}M$ by 
    $$
w(0,t)=w, \quad \nabla_{\partial _s}w(s,t)=0 \text{ for all }(s,t)\in [0,1]\times [0,1].
    $$
    Then 
    $$
\begin{aligned}
    &\langle P_{\partial S}v-v, w \rangle\\
    =&\langle  v(0,1),w(0,1)  \rangle - \langle  v(0,0),w(0,0)  \rangle \\
    =&\int_{0}^1  \nabla_{\partial _t}(\langle v(0,t),w(0,t) \rangle) dt \\
    =&\int_{0}^1  \langle \nabla_{\partial _t} v(0,t),w(0,t) \rangle dt \\
    =&\int_{0}^1  \langle \nabla_{\partial _t} v(1,t),w(1,t) \rangle dt 
    -\int_{0}^1 \int_{0}^1 \nabla_{\partial _s}(\langle \nabla_{\partial _t} v(s,t),w(s,t) \rangle)dsdt \\
    =&-\int_{0}^1 \int_{0}^1 \nabla_{\partial _s}(\langle \nabla_{\partial _t} v(s,t),w(s,t) \rangle)dsdt \\
    =&-\int_{0}^1 \int_{0}^1 \langle \nabla_{\partial _s} \nabla_{\partial_t} v(s,t),w(s,t) \rangle dsdt \\
    =&\int_{0}^1 \int_{0}^1 \langle R(C^*\nabla_{\partial _t}, C^*\nabla_{\partial _s})v(s,t),w(s,t) \rangle dsdt
\end{aligned}
    $$
    where the third line follows from $\nabla_{\partial_t} w(0,t)=0$, the fifth line follows from $\nabla_{\partial _t} v(t)=0$, and the sixth line follows from $\nabla_{\partial _s}w(s,t)=0$. Since it is well-known that Riemannian curvature tensor can be represented by a finite sum of sectional curvature, the lemma follows from the curvature of $M$ being bunched and our arbitrary choice on $w\in S_pM$.
\end{proof} 

Fix a constant $\delta>0$ that is sufficiently small. The next lemma results from applying Lemma \ref{lemmaholonomy} to the `unstable parallelogram'.

\begin{lemma}  \label{lemmaunstableexpansion}
    There exists a constant $C^u>0$ such that the following holds: for all $T>0$, $v_1\in SM$, $v_2\in W_{g}^u(v_1,\delta)$, write $v_i':=g_{-T}(v_i)$ for $i\in \{1,2\}$. Let $r_{\alpha}(-T)$, $\alpha\in [0,1]$ be a smooth curve in $W^u_{g}(v_1',\delta)$ with $r_{0}(-T)=v_1'$, $r_{1}(-T)=v_2'$, and $l(g_T(r_{\alpha}(-T)))<2d^u(v_1,v_2)$. Let $y_1'\in FM$ be such that $\pi(y_1')=v_1'$, and $y_2'$ be the parallel transport of $y_1'$ along $r_{\alpha}(-T)$. Let $y_i$ be the parallel transport of $y_i'$ along $\gamma_{v'_i}$, $i\in \{1,2\}$. \footnote{Notice that $y_1=F_T(y_1')$, while $y_2$ is not necessarily equal to $F_T(y_2')$ since the first component of $y_2'$ might not be $v_2'$.} Then
\begin{equation} \label{eqrotationsmall}
    d(y_1,y_2)\leq C^ud^u(v_1,v_2).
\end{equation}
\end{lemma}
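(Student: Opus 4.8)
The plan is to exhibit $y_1$ and $y_2$ as parallel transports of the single frame $y_1'$ along two homotopic base paths and to estimate the resulting holonomy by Lemma~\ref{lemmaholonomy}. Write $P_0:=\bar\pi(v_1')$, $Q_0:=\bar\pi(v_2')$, $P_1:=\bar\pi(v_1)$, $Q_1:=\bar\pi(v_2)$. Let $c_0$ be the base curve $\alpha\mapsto\bar\pi(r_\alpha(-T))$ (from $P_0$ to $Q_0$), let $c_1$ be the base curve $\alpha\mapsto\bar\pi(g_T r_\alpha(-T))$ (from $P_1$ to $Q_1$), and let $\sigma_i:=\bar\pi(\gamma_{v_i'}|_{[0,T]})$ be the two geodesic sides. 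Unwinding the definitions: $y_1$ is the parallel transport of $y_1'$ along $\sigma_1$, while $y_2$ is the parallel transport of $y_1'$ along $c_0$ followed by $\sigma_2$. Parallel transporting $y_1$ from $P_1$ to $Q_1$ along $c_1$ yields a frame $\rho(y_1)\in F_{Q_1}M$ equal to the parallel transport of $y_1'$ along $\sigma_1$ followed by $c_1$. Thus $\rho(y_1)$ and $y_2$ are two parallel transports of the same frame $y_1'$ from $P_0$ to $Q_1$, so $y_2$ is obtained from $\rho(y_1)$ by applying the $M$-holonomy $H$ around the loop $\ell$ based at $Q_1$ that traverses $c_1^{-1}$, $\sigma_1^{-1}$, $c_0$, $\sigma_2$; this loop is (the boundary of) the immersed disk $S:=\{\gamma_{r_\alpha(-T)}(t):\alpha\in[0,1],\ t\in[0,T]\}$. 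Here one must track everything as parallel transports of the fixed frame $y_1'$ rather than as frame-flow images, since (as the footnote warns) $y_2\neq F_T(y_2')$ in general.

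\textbf{Holonomy bound via Lemma~\ref{lemmaholonomy}.} Parametrize $S$ by $(\alpha,t)$. Since the variation $r_\alpha(-T)$ lies in an unstable leaf, the variation field $J_\alpha:=\partial_\alpha\gamma_{r_\alpha(-T)}$ is an unstable Jacobi field, in particular $J_\alpha\perp\dot\gamma$, so the area element of $S$ is $\|\partial_t\gamma\wedge\partial_\alpha\gamma\|\,dt\,d\alpha=|J_\alpha(t)|\,dt\,d\alpha$ (and nonpositive curvature gives no conjugate points, so $S$ is a genuine immersed disk). Applying Lemma~\ref{lemmaholonomy} to $S$ and to each vector of a frame at $Q_1\in\partial S$ gives $\|H-\mathrm{Id}\|\le\int_S C_B'|K|\,dA\le C_B'\int_0^1\!\int_0^T |K(\gamma_{r_\alpha(-T)}(t))|\,|J_\alpha(t)|\,dt\,d\alpha$, where $\|K\|_\infty:=\sup_M|K|<\infty$ by compactness. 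Now I use the infinitesimal form of the backward contraction along unstable leaves from Lemma~\ref{stablegeo}: since $l(g_T r_\alpha(-T))<2d^u(v_1,v_2)\le 2\delta$, the curve $g_t r_\alpha(-T)$ stays in $g^{-T}W_g^u(v_1,2\delta)$, and along the relevant good segment one has $|J_\alpha(t)|\le |J_\alpha(T)|e^{-\frac{\eta}{2}(T-t)}$ for $t\in[0,T]$. Integrating in $t$ yields $\int_0^T |K|\,|J_\alpha(t)|\,dt\le\frac{2}{\eta}\|K\|_\infty\,|J_\alpha(T)|$, and then $\int_0^1 |J_\alpha(T)|\,d\alpha=l(c_1)\le l(g_T r_\alpha(-T))<2d^u(v_1,v_2)$, giving $\|H-\mathrm{Id}\|\le\frac{4C_B'}{\eta}\|K\|_\infty\, d^u(v_1,v_2)$.

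\textbf{Conclusion.} The metric on $FM$ is comparable to ``distance between footpoints plus fibre distance after parallel transport along a short base path'' (the standard description of the frame-bundle metric; see \cite[\S 2.4]{BCFT}). Hence $d(y_1,y_2)\le C_{FM}\bigl(d(P_1,Q_1)+d_{F_{Q_1}M}(\rho(y_1),y_2)\bigr)\le C_{FM}\bigl(l(c_1)+\|H-\mathrm{Id}\|\bigr)\le C_{FM}\bigl(2+\tfrac{4C_B'}{\eta}\|K\|_\infty\bigr)d^u(v_1,v_2)$, so \eqref{eqrotationsmall} holds with $C^u:=C_{FM}\bigl(2+\tfrac{4C_B'}{\eta}\|K\|_\infty\bigr)$.

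\textbf{Main obstacle.} The only genuinely delicate points are: (i) the bookkeeping identifying $y_2$ with a single holonomy image of $\rho(y_1)$, i.e.\ keeping careful track that both are parallel transports of $y_1'$ (not frame-flow images) along the two sides of the parallelogram; and (ii) passing from the distance-contraction estimate of Lemma~\ref{stablegeo} to the pointwise Jacobi-field bound $|J_\alpha(t)|\le|J_\alpha(T)|e^{-\frac{\eta}{2}(T-t)}$ — this exponential decay (not just the nonpositive-curvature monotonicity $|J_\alpha(t)|\le|J_\alpha(T)|$, which would only give a bound growing linearly in $T$) is exactly what makes $\int_S|K|\,dA$ uniformly bounded in $T$. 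The bunched curvature hypothesis enters only through the constant $C_B'$ in Lemma~\ref{lemmaholonomy}.
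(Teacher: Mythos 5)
Your overall strategy --- reduce $d(y_1,y_2)$ to the holonomy around the ``unstable parallelogram'' and control it by $\int_S |K|\,dA$ via Lemma~\ref{lemmaholonomy} --- is exactly the paper's route, and your bookkeeping identifying $y_2$ with a holonomy image of a parallel transport of $y_1'$ (rather than a frame-flow image) is essentially what the paper does with its intermediate points $y_1'', y_1''', \hat y_1$. But the way you bound $\int_S |K|\,dA$ has a genuine gap.

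You bound the Jacobi field by $|J_\alpha(t)|\le |J_\alpha(T)|e^{-\frac{\eta}{2}(T-t)}$, citing Lemma~\ref{stablegeo}. That lemma requires the relevant orbit segment to lie in $\GGG(\eta)$. Lemma~\ref{lemmaunstableexpansion} makes no such hypothesis: the statement quantifies over \emph{all} $T>0$, \emph{all} $v_1\in SM$, and all $v_2\in W_g^u(v_1,\delta)$. On a rank-one manifold of nonpositive curvature there are arbitrarily long unstable segments that spend all (or most) of their time near $\text{Sing}$, along which unstable Jacobi fields need not decay exponentially going backward --- they can stay essentially constant. Your phrase ``along the relevant good segment'' is doing work that the hypotheses do not support, and as a symptom your final constant $C^u$ depends on $\eta$ even though $\eta$ does not appear in the lemma. (The lemma is later used inside Proposition~\ref{propnocontraction} and Lemma~\ref{uniformtran} without any $\GGG(\eta)$ restriction on the pair $(v_1,T)$, so one really does need it uniformly.) You correctly observed that mere monotonicity $|J_\alpha(t)|\le |J_\alpha(T)|$ only gives a bound linear in $T$; the error is in fixing this with a decay estimate you are not entitled to.

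The paper closes the gap with a different, more robust trick that requires no decay. Writing $J_\alpha(t)=j_\alpha(t)E_\alpha(t)$, the scalar Jacobi equation gives $-K_\alpha j_\alpha = j''_\alpha$ pointwise on the surface, and the area element is $j_\alpha\,dt\,d\alpha$, so
$$\int_R |K|\,dA=\int_0^1\int_{-T}^0 (-K_\alpha j_\alpha)\,dt\,d\alpha=\int_0^1\bigl(j'_\alpha(0)-j'_\alpha(-T)\bigr)\,d\alpha\le \int_0^1 u_\alpha(0)\,j_\alpha(0)\,d\alpha,$$
using that in nonpositive curvature the unstable Jacobi field has $j'_\alpha\ge 0$, so $-j'_\alpha(-T)\le 0$. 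Then $u_\alpha(0)=j'_\alpha(0)/j_\alpha(0)$ is uniformly bounded above by a constant $U$ (the Riccati bound, see \cite[Lemma~2.9]{BCFT}, which needs only bounded curvature), and $\int_0^1 j_\alpha(0)\,d\alpha=l(c_1)<2d^u(v_1,v_2)$. This yields $\int_R |K|\,dA<2U\,d^u(v_1,v_2)$ with a constant independent of $T$ and of $\eta$, with no exponential estimate at all. If you replace your Jacobi-field decay step with this integration-by-parts/Riccati argument, your proof matches the paper's.
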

\begin{proof}
    Let $y_i:=(p_i,v_{0,i},v_{1,i},\cdots,v_{n-1,i})$ and $y_i':=(p'_i,v'_{0,i},v'_{1,i},\cdots,v'_{n-1,i})$ with $i\in \{1,2\}$, where $p_i=\pi(v_{0,i})$, $p'_i=\pi(v_{0,i}')$. Let $r_{\alpha}(t-T):=g_t(r_{-T}(\alpha))$ with $t\in [0,T]$, $y_1''$ be the parallel transport of $y_1$ along $r_{\alpha}(0),\alpha\in [0,1]$, $y_1'''$ be the parallel transport of $y_1''$ along the geodesic initiated from $-v_2$ for time $T$, and $\hat{y}_1$ be the parallel transport of $y_1'''$ along $r_{-\beta}(-T)$, $\beta\in [-1,0]$. Let $R:=\bigcup_{\alpha\in [0,1],t\in [-T,0]}r_{\alpha}(t)$ be the polygon in $TM$ bounded by $g_{[0,T]}v_1'$, $r_{[0,1]}(0)$, $g_{[0,T]}(-v_2)$, $r_{[1,0]}(-T)$ iteratively. It is clear that $\hat{y}_1=P_{\partial R}(y_1')$.
    
    By observing that $d(y_1,y_2)\leq d(y_1'',y_2)+l(r_{\alpha}(0))<d(y_1'',y_2)+2d^u(v_1,v_2)$ and $d(y_1'',y_2)=d(y_1''',y_2')=d(\hat{y}_1,y_1')$, it suffices to prove \eqref{eqrotationsmall} with LHS being replaced by $d(\hat{y}_1,y_1')$, which is
    \begin{equation} \label{eqrotationhatsmall}
        d(\hat{y}_1,y_1')\leq C^ud^u(v_1,v_2).
    \end{equation}
    Write $\hat{y}_1:=(p_1',\hat{v}_{0,1},\cdots,\hat{v}_{n-1,1})$, where $\hat{v}_{j,1}=P_{\partial R}v'_{j,1}$ for each $j$. To obtain \eqref{eqrotationhatsmall}, it suffices to show that there exists a constant $M^u>0$ such that for all $v\in S_{p_1'}(M)$, we have
    \begin{equation} \label{eqparalleltransportsmall}
        d(v,P_{\partial R}v)<M^u d^u(v_1,v_2).
    \end{equation}
    as \eqref{eqparalleltransportsmall} gives
    $$
\langle x, (P_{\partial R}-Id)y \rangle <nM^ud^u(v_1,v_2) \text{ for all }x,y\in F_{p_1'}M.
    $$

Now we turn to the proof of \eqref{eqparalleltransportsmall}. Since the curvature is uniformly bounded, smoothing $\partial R$ by smoothly connecting points on adjacent edges of $\partial R$ and pushing the points towards the common vertex (so in the limit case we have an infinitesimal triangle), we are able to apply Lemma \ref{lemmaholonomy} and obtain that 
$$
d(v, P_{\partial R}v)\leq C_B'\int_R |K|dA,
$$
where $K$ is the Gauss curvature of $R$. To prove \eqref{eqparalleltransportsmall}, it suffices to show the existence of some constant $C'>0$ such that
\begin{equation} \label{eqintegralcurvaturesmall}
    \int_{R}|K|dA\leq C'd^u(v_1,v_2).
\end{equation}

Write $K_{\alpha}(t):=K(r_{\alpha}(t))$, and $J_{\alpha}(t)$ as the unstable Jacobi field along $r_{\alpha}(t)$ for each $t\in [-T, 0]$. The Jacobi equation for $J_{\alpha}(t)$ along $r_{\alpha}$ on $R$ is given by
\begin{equation} \label{eqsurfaceJacobi}
    J''_{\alpha}(t)+K_{\alpha}(t)J_{\alpha}(t)=0.
\end{equation}
Writing $J_{\alpha}(t)=j_{\alpha}(t)E_{\alpha}(t)$, with $j_{\alpha}(t)>0$ and $||E_{\alpha}(t)||=1$. Then \eqref{eqsurfaceJacobi} turns into
\begin{equation} \label{eqsurfaceJacobinorm}
    j''_{\alpha}(t)+K_{\alpha}(t)j_{\alpha}(t)=0.
\end{equation}
Therefore, we have
$$
\begin{aligned}
    &\int_{R}|K|dA=\int_R -KdA 
    =\int_{-T}^0\int_0^1 -K_{\alpha}(t)j_{\alpha}(t)d\alpha dt\\
    =&\int_0^1\int_{-T}^0 j''_{\alpha}(t)dtd\alpha 
    =\int_0^1 j'_{\alpha}(0)-j'_{\alpha}(-T)d\alpha 
    \leq \int_0^1 j_{\alpha}(0)u_{\alpha}(0)d\alpha,
\end{aligned}
$$
where $u_{\alpha}(t):=\frac{j'_{\alpha}(t)}{j_{\alpha}(t)}$. By \cite[Lemma 2.9]{BCFT} and the fact of curvature being bounded, $u_{\alpha}(t)$ is uniformly bounded from above by a constant $U$, which is independent of the choice on $J_{\alpha}(t)$. Therefore, we may continue the above equation as 
$$
\int_{R}|K|dA\leq \int_0^1 j_{\alpha}(0)u_{\alpha}(0)d\alpha \leq U\int_0^1 j_{\alpha}(0)d\alpha=Ul(r_{\alpha}(0))<2Ud^u(v_1,v_2), 
$$
which concludes the proof of \eqref{eqintegralcurvaturesmall}, thus the proof of \eqref{eqrotationsmall}.
\end{proof}

The above lemma plays an essential role in the proof of the following result, which essentially indicates that $F$ can not expand too much in the center unstable direction when going backwards. 

\begin{proposition} \label{propnocontraction}
    There exists a constant $C^{cu}>1$ such that the following holds: for all $T>0$, $v_1^w\in SM$, $v_1^z\in W_{g}^{cu}(v_1^w,\delta)$, $w_1\in F_{v_1^w}M$, $z_1\in F_{v_1^z}M$, letting $w_1':=F^{-T}w_1$, $z_1':=F^{-T}z_1$, we have
    $$
d(z_1',w_1')\leq C^{cu}d(z_1,w_1).
    $$
\end{proposition}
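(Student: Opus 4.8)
The plan is to peel off the flow direction, reducing the center--unstable estimate to a purely unstable one, and then feed the unstable estimate into Lemma \ref{lemmaunstableexpansion} together with two soft inputs: $F^{-T}$ acts isometrically on the fibres of $\pi\colon FM\to SM$, and $g^{-T}$ does not expand lengths of curves lying inside unstable leaves (a consequence of nonpositive curvature). I will freely use that $\pi$ is Lipschitz, so $d_{SM}(\pi x,\pi y)\le C_0\,d(x,y)$; that on each fibre the intrinsic metric is uniformly bi-Lipschitz to $d$; that a frame and its parallel transport along a $C^1$ curve $c$ in $M$ are within $C\,\mathrm{length}(c)$ of each other in $d$; and that $\|dF^{\tau}\|\le C$ for $|\tau|\le\delta$.

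\emph{Reduction.} Using the local product structure of $W_g^{cu}$ near the diagonal, write $v_1^z=g^{\tau}\hat v$ with $\hat v\in W_g^u(v_1^w,C\delta)$, $|\tau|\le C\delta$, and $d^u(v_1^w,\hat v)+|\tau|\le C\,d_{SM}(v_1^w,v_1^z)\le CC_0\,d(z_1,w_1)$, and lift to $z_1=F^{\tau}\tilde z$ with $\tilde z\in F_{\hat v}M$. Since $z_1'=F^{-T}z_1=F^{\tau}F^{-T}\tilde z$ and $\|dF^{\tau}\|\le C$, it is enough to bound $d(F^{-T}\tilde z,w_1')$ in terms of $d(\tilde z,w_1)\le d(z_1,w_1)+C|\tau|\le C'd(z_1,w_1)$.

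\emph{Unstable estimate.} Set $v_1:=v_1^w$, $v_2:=\hat v$, $v_i':=g^{-T}v_i$; pick a smooth curve $r_\alpha(0)$ in $W_g^u(v_1,C\delta)$ from $v_1$ to $v_2$ of length $<2d^u(v_1,v_2)$ and let $r_\alpha(-T):=g^{-T}r_\alpha(0)$, which lies in $W_g^u(v_1',\delta)$ and (by non-expansion) has length $\le\mathrm{length}(r_\alpha(0))$. Applying Lemma \ref{lemmaunstableexpansion} with $y_1':=w_1'=F^{-T}w_1$ produces $y_2'$ (the parallel transport of $w_1'$ along $r_\alpha(-T)$, a frame over the footpoint of $v_2'$) and the frame $y_2$ over the footpoint of $v_2$, with $d(w_1,y_2)\le C^u d^u(v_1,v_2)$ since $y_1=F^Tw_1'=w_1$. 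Then
$$d(F^{-T}\tilde z,\,w_1')\ \le\ d(F^{-T}\tilde z,\,y_2')+d(y_2',\,w_1'),\qquad d(y_2',w_1')\le C\,\mathrm{length}(r_\alpha(-T))<2C\,d^u(v_1,v_2).$$
For the first term: rotate the first vector of $y_2$ by an $O(d^u(v_1,v_2))$ angle to get $\bar y_2\in F_{\hat v}M$, lying in the same $\pi$-fibre as $\tilde z$ and with $d(\bar y_2,y_2)\le C\,d^u(v_1,v_2)$; the fibre-isometry of $F^{-T}$ together with the bi-Lipschitz comparison then gives $d(F^{-T}\tilde z,F^{-T}\bar y_2)\le C\,d(\tilde z,\bar y_2)\le C\big(d(\tilde z,w_1)+d(w_1,y_2)+d(y_2,\bar y_2)\big)$; and $F^{-T}\bar y_2$ is compared with $y_2'$ — both frames over points $O(d^u)$-close to $v_2'$, the discrepancy being essentially horospherical and therefore not expanded by $F^{-T}$. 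Collecting all the terms, adding the flow error $d(z_1',\cdot)\le C|\tau|+d(F^{-T}\tilde z,\cdot)$, and inserting $d^u(v_1,v_2)+|\tau|\le CC_0\,d(z_1,w_1)$ from the reduction yields $d(z_1',w_1')\le C^{cu}d(z_1,w_1)$, and we enlarge $C^{cu}$ to make it $>1$.

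\emph{Main obstacle.} All the geometry — in particular the only use of the bunched-curvature hypothesis, through the Gauss--Bonnet estimate — is already packaged in Lemma \ref{lemmaunstableexpansion}, so what remains is purely frame-bundle bookkeeping, and the delicate point is to make it uniform in $T$. Lemma \ref{lemmaunstableexpansion} returns a frame $y_2$ whose first vector is only $O(d^u)$-close to, not equal to, $\hat v$, so one must correct this tilt before invoking the fibre-isometry of $F^{-T}$, and then verify that neither the tilt nor its $F^{-T}$-image grows with $T$; this holds because the tilt lies (essentially) in the unstable/horospherical direction, which $F^{-T}$ contracts. Carrying out this last comparison carefully is the crux of the argument.
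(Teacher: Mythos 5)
Your route differs from the paper's. You peel off the flow direction via the splitting $W_g^{cu}=W_g^c\oplus W_g^u$, reducing to a pure unstable comparison, and then combine Lemma \ref{lemmaunstableexpansion} with the fibre isometry of $F^{-T}$ and the isometry of parallel transport. The paper instead takes the Smale bracket $v_2^w\in W_g^c(v_1^z)\cap W_g^u(v_1^w)$, builds the auxiliary frame $w_2$ over (roughly) $v_2^w$, and runs a three-case analysis on the ratio $d(z_1,w_2)/d^u(v_1^w,v_2^w)$ (large, small, intermediate), in each case invoking $d(z_1,w_2)=d(z_1',w_2')$, Lemma \ref{lemmaunstableexpansion} and the local product structure constant $\kappa$. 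Your peel-off avoids the case split at the cost of more careful bookkeeping of the first-vector tilt; both are viable.

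However, your justification of the crux step --- that $d(F^{-T}\bar y_2,y_2')$ is controlled ``because the tilt lies (essentially) in the unstable/horospherical direction, which $F^{-T}$ contracts'' --- is not correct. The discrepancy between $\bar y_2$ and $y_2$ is a rotation of the frame's first vector through the angle $\angle(\pi(y_2),\hat v)$, i.e.\ a perturbation in the vertical/$\pi$-fibre direction of $FM\to SM$, not a horospherical perturbation; $F^{-T}$ is an isometry on $FM\to M$ fibres, so it neither contracts nor expands such rotations. Moreover $y_2'$ is not $F^{-T}y_2$ (the first vector of $y_2$ is not $\hat v$), so ``$F^{-T}$ contracts the discrepancy'' does not even parse. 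The correct argument is simpler and exact: $F^{-T}\bar y_2$ is the parallel transport of $\bar y_2$ along $\gamma_{\hat v}|_{[0,-T]}$ (since $\bar y_2\in F_{\hat v}M$ and $F^{-T}$ on the fibre over $\hat v$ is exactly parallel transport along $\gamma_{\hat v}$), while $y_2'$ is by construction the parallel transport of $y_2$ along $\gamma_{v_2'}|_{[T,0]}$ --- and these two are the same geodesic segment, since $\gamma_{v_2'}(T+t)=\gamma_{\hat v}(t)$. Parallel transport along a fixed curve is an isometry between the $SO(n)$-fibres of $FM\to M$, hence $d(F^{-T}\bar y_2,y_2')=d(\bar y_2,y_2)=O(d^u(v_1^w,\hat v))$, with no dependence on $T$. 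Substituting this identity for your heuristic, your triangle inequality chain closes and the proof is complete.
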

\begin{proof}
    Let $v_2^w\in W^c_{g}(v_1^z,\delta)\cap W^u_{g}(v_1^w,\delta)$, and $v_1'^w,v_2'^w,v_1'^z$ be the $g_{-T}$-image of $v_1^w,v_2^w,v_1^z$ respectively. Let $r_{\alpha}(-T)\subset W_{g}^u(v_1'^w,\delta)$ be a smooth curve connecting $v_1'^w,v_2'^w$ with parameter $\alpha\in [0,1]$, such that $l(g_T(r_{\alpha}(-T)))<2d^u(v_1^w,v_2^w)$. Let $w_2'$ be the parallel transport of $w_1'$ along $r_{\alpha}(-T)$, and $w_2$ be the parallel transport of $w_2'$ along the geodesic corresponding to $(v_2'^w,T)$. We will operate the proof by discussing in cases.
    \begin{enumerate}
        \item $d(z_1,w_2)>2C^ud^u(v_1^w,v_2^w)$. In this case, by applying Lemma \ref{lemmaunstableexpansion} and noticing that $d(z_1,w_2)=d(z_1',w_2')$, we have
        $$
\begin{aligned}
    &d(z_1,w_1)\geq d(z_1,w_2)-d(w_1,w_2)\\
    \geq &d(z_1,w_2)-C^ud(v_1^w,v_2^w)
    \geq \frac{d(z_1,w_2)}{2}=\frac{d(z_1',w_2')}{2},
\end{aligned}
        $$
        which together with
        $$
        \begin{aligned}
&d(z_1',w_1')\leq d(z_1',w_2')+d(w_2',w_1')\leq d(z_1',w_2')+2d^u(v_1^w,v_2^w)\\
<&d(z_1',w_2')+\frac{d^u(z_1,w_2)}{C^u}=(1+\frac{1}{C^u})d(z_1',w_2')
       \end{aligned}
       $$
        implies that
\begin{equation} \label{eqcase1}
    d(z_1,w_1)>\frac{d(z_1',w_1')}{2(1+\frac{1}{C^u})}.
\end{equation}
        \item $d(z_1,w_2)<(2\kappa)^{-1}d^u(v_1^w,v_2^w)$, where $\kappa$ is from Lemma \ref{LPS}. In this case, we have
        $$
\begin{aligned}
    &d(z_1,w_1)
    \geq d(w_1,w_2)-d(z_1,w_2)\geq d(w_1,w_2)-(2\kappa)^{-1}d^u(v_1^w,v_2^w) \\
\geq &(\kappa)^{-1}d^u(v_1^w,v_2^w)-(2\kappa)^{-1}d^u(v_1^w,v_2^w)>(2\kappa)^{-1}d^u(v_1^w,v_2^w).
\end{aligned}
        $$
        Together with 
        $$
        \begin{aligned}
&d(z_1',w_1')\leq d(w_1',w_2')+d(w_2',z_1') \\
\leq &2d^u(v_1^w,v_2^w)+(2\kappa)^{-1}d^u(v_1^w,v_2^w)<3d^u(v_1^w,v_2^w),
       \end{aligned}
       $$
        we know
        \begin{equation} \label{eqcase2}
            d(z_1,w_1)>\frac{d(z_1',w_1')}{6\kappa}.
        \end{equation}
        \item $d(z_1,w_2)\in [(2\kappa)^{-1}d^u(v_1^w,v_2^w),2C^ud^u(v_1^w,v_2^w)]$. In this case, we have
\begin{equation} \label{eqcase3}
    \begin{aligned}
    &d(z_1,w_1)\geq d(\pi(z_1),\pi(w_1))\geq \kappa^{-1}d^{cu}(v_1^z,v_1^w)\\
    =&\kappa^{-1}(d^u(v_1^w,v_2^w)+d^c(v_2^w,v_1^z))>\kappa^{-1}(\frac{d^u(v_1^w,v_2^w)}{2}+\frac{d(z_1,w_2)}{4C^u})\\
    >&\kappa^{-1}(\frac{d(w_1',w_2')}{4}+\frac{d(z_1',w_2')}{4C^u})>\frac{d(z_1',w_1')}{4\kappa C^u}.
\end{aligned}
\end{equation}
    \end{enumerate}
    Combining \eqref{eqcase1}, \eqref{eqcase2}, \eqref{eqcase3}, we are done with the proof.
\end{proof}

\subsubsection{Weak specification}

\begin{lemma}\label{uniformtran}(Effective density)
Let $F^t:FM\to FM$ be the frame flow on a closed rank one manifold $M$ with bunched nonpositive curvature which is topologically transitive. Given any $\eta>0$, let $\delta=\delta(\eta)$ be as in \eqref{smalldelta}. Then there is $T\in \NN$ such that for every $(F^{-t_1}x,t_1), (y,t_2)\in \tilde \GGG(\eta)$, there is $0\leq t\leq T$ such that
$$F^t(\hat W_F^u(x,t_1, \delta))\cap \hat W_F^{cs}(y,t_2,\delta)\neq \emptyset.$$
\end{lemma}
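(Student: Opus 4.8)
The plan is to project the whole configuration to $SM$ via the bundle map $\pi\colon FM\to SM$, solve an effective-density problem for the geodesic flow $g^t$ there, and lift the intersection point back to $FM$. Write $v:=\pi(x)$, $u:=\pi(y)$; by Definition~\ref{decom1} the hypotheses become $(g^{-t_1}v,t_1)\in\GGG(\eta)$ and $(u,t_2)\in\GGG(\eta)$. First I would compute the $\pi$-images. Since the horospherical translations of Lemma~\ref{stablemanifold} satisfy $\pi\circ p^{u}(\cdot,w)=w$ and $\pi\circ p^{s}(\cdot,w)=w$, the map $w\mapsto p^{u}(g^{-t_1}v,w)(F^{-t_1}x)$ is a $\pi$-section over $g^{-t_1}W_g^u(v,\delta)$; hence $\pi(\hat W^u_F(x,t_1,\delta))=W_g^u(v,\delta)$, $\pi(F^t\hat W^u_F(x,t_1,\delta))=g^tW_g^u(v,\delta)$, and $\hat W^u_F(x,t_1,\delta)$ contains exactly one frame above each point of $W_g^u(v,\delta)$. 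The point to handle carefully is $\hat W^{cs}_F(y,t_2,\delta)$: reading its definition literally, and using that $p^s(u,w)(y)$ is defined for every $w\in W_g^s(u,\delta)$ (Lemma~\ref{stablemanifold}, stable part, valid because $(u,t_2)\in\GGG(\eta)$), one obtains
$$\hat W^{cs}_F(y,t_2,\delta)=\pi^{-1}\big(\mathcal T^{cs}(u,\delta)\big),\qquad \mathcal T^{cs}(u,\delta):=\bigcup_{s\in\RR}g^sW_g^s(u,\delta),$$
the full $\pi$-preimage of the \emph{thin} weak-stable tube around $\gamma_u$ — not of the whole weak-stable leaf, and not a fibrewise section of it.

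With these formulas the lemma reduces to a statement purely about $g^t$: there is $T=T(\eta)\in\NN$ such that for all $(g^{-t_1}v,t_1),(u,t_2)\in\GGG(\eta)$ there is $0\le t\le T$ with
$$g^tW_g^u(v,\delta)\cap\mathcal T^{cs}(u,\delta)\neq\emptyset.$$
Indeed, given such $t$ and a base point $\bar z$ in this intersection, $g^{-t}\bar z\in W_g^u(v,\delta)$, so let $\zeta\in\hat W^u_F(x,t_1,\delta)$ be the unique frame above $g^{-t}\bar z$ and put $z:=F^t\zeta$; then $z\in F^t\hat W^u_F(x,t_1,\delta)$ while $\pi(z)=g^t(g^{-t}\bar z)=\bar z\in\mathcal T^{cs}(u,\delta)$, so $z\in\pi^{-1}(\mathcal T^{cs}(u,\delta))=\hat W^{cs}_F(y,t_2,\delta)$, and $z$ lies in the desired intersection. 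Lemma~\ref{stablemanifold} is what makes $\hat W^u_F(x,t_1,\delta)$ well defined over all of $W_g^u(v,\delta)$, and Proposition~\ref{propnocontraction} together with Lemma~\ref{stablegeo} will be reused when this lemma is fed into the proof of weak controlled specification for $\tilde\GGG(\eta)$, where the shadowing distances in $FM$ are estimated.

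To establish the base statement, first note that topological transitivity of $F^t$ forces topological transitivity of $g^t$ on the compact space $SM$, because $\pi$ is a surjective semiconjugacy. Next, in nonpositive curvature $g^t$ does not contract distances along strong unstable leaves in forward time, so $g^tW_g^u(v,\delta)\supseteq W_g^u(g^tv,\delta)$ for all $t\ge0$: the flowed disk never shrinks below radius $\delta$. Finally, $(u,t_2)\in\GGG(\eta)$ gives $\lambda(u)\ge\eta$, i.e.\ $u\in\text{Reg}(\eta)$, so the local product structure of $W_g^u$, $W_g^{cs}$ at $u$ from Lemma~\ref{LPS} is available: as soon as some point of $g^tW_g^u(v,\delta)$ comes within a fixed $\epsilon_0=\epsilon_0(\eta)$ of $u$ and interior to that disk (arrange this by carrying a slightly smaller radius throughout), the Smale bracket with $u$ yields a point of $g^tW_g^u(v,\delta)\cap W_g^{cs}(u,\kappa\epsilon_0)\subset g^tW_g^u(v,\delta)\cap\mathcal T^{cs}(u,\delta)$. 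The remaining ingredient — a bound $T=T(\eta)$, independent of the orbit segments, on how long one must flow $W_g^u(v,\delta)$ forward for it to come $\epsilon_0$-close to $u$ — is precisely the specification property of $\GGG(\eta)$ for the geodesic flow established in \cite{BCFT}: one fixes once and for all a single $\epsilon_0$-dense regular orbit segment of length $L_0$ (available since $g^t$ is transitive and $\text{Reg}$ is open, dense and flow-invariant), brackets the unstable disk at the endpoint of $(g^{-t_1}v,t_1)$ onto the stable set of the point of the fixed segment where it $\epsilon_0$-approximates $v$, runs along that segment, and brackets off onto $\mathcal T^{cs}(u,\delta)$ at $u$; the total transition time is bounded in terms of $L_0$ and the uniformly bounded bracket times, and this bound is $T$.

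The main obstacle is exactly this last step: upgrading topological transitivity, which by itself only supplies \emph{unbounded} return times, to a transition time that does not depend on $(v,t_1)$ or $(u,t_2)$. The subtlety is twofold: $W_g^u(v,\delta)$ is a positive-codimension disk, so its forward orbit need not be dense, and $v$ is controlled by $(g^{-t_1}v,t_1)\in\GGG(\eta)$ only in its past, not in its future. Both are handled by the fixed $\epsilon_0$-dense regular reference segment together with the regular local product structure of Lemma~\ref{LPS}, imported from \cite{BCFT}; the rest of the argument is bookkeeping with the definitions and the horospherical-translation lemmas.
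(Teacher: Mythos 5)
Your reduction to a statement about the geodesic flow on $SM$ rests entirely on the identification $\hat W^{cs}_F(y,t_2,\delta)=\pi^{-1}\bigl(\bigcup_{s}g^sW^s_g(\pi y,\delta)\bigr)$, and this is where the argument breaks. The displayed definition of $\hat W^{cs}_F$ can indeed be read that way (and the ``codimension $n-1$'' remark is consistent with it), but that reading is not the intended one: in Theorem \ref{spe} the conclusion $F^{\sum_{i\le k}(t_i+\tau_i)}y_{k+1}\in\hat W^{cs}_F(x_{k+1},(1-\frac1\lambda)\delta)$ is used, via Lemma \ref{stablemanifold}, to deduce $d(F^tx_{k+1},F^{t+\sum}y_{k+1})<(1-\frac1\lambda)\delta$ \emph{in} $FM$ for $0\le t\le t_{k+1}$, which is false for an arbitrary frame sitting over the weak-stable tube (the fiber components could differ by any rotation in $SO(n-1)$). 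Consistently, the paper's own proof of this lemma does not stop at making the $\pi$-projections meet: it produces an intersection point $F^{T_0}w$ with $d(F^{T_0}w,y)\ll\delta$ in $FM$, frame included. So $\hat W^{cs}_F(y,t_2,\delta)$ must be understood as the $F^t$-saturation of the section $\hat W^s_F(y,t_2,\delta)$, one prescribed frame over each base point. Under that reading, projecting to $SM$ discards exactly what the lemma is about — matching the fibers — and transitivity of $g^t$ together with the specification machinery of \cite{BCFT} cannot recover it: those inputs are blind to the $SO(n-1)$-direction. Your argument would apply verbatim to any isometric $SO(n-1)$-extension of the geodesic flow, transitive or not, which is why the hypothesis of the lemma is transitivity of the \emph{frame} flow and why this is the genuinely hard point (Brin's transitivity group) in the whole subject.

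The paper's route is different in precisely this respect: it uses topological transitivity of $F^t$ on the compact space $FM$ to get effective density of \emph{open balls in the frame bundle} — a compactness argument shows $\gamma_t\to0$, hence for each $x$ there is $T(x,\delta)$ with $B(y,\delta_2)\cap\bigcup_{0\le s\le T(x,\delta)}F^sB(x,\delta_2)\neq\emptyset$ for all $y$, and upper semicontinuity of $x\mapsto T(x,\delta)$ yields a uniform $T$. The transit point $z_1\in B(x,\delta_2)$ with $F^{T_0}z_1\in B(y,\delta_2)$ is then corrected onto $\hat W^u_F(x,t_1,\delta)$ and its image onto $\hat W^{cs}_F(y,t_2,\delta)$ using the local product structure of Lemma \ref{LPS}, the horospherical translations of Lemma \ref{stablemanifold}, and — essentially — Proposition \ref{propnocontraction} to keep the backward images of these corrections of size $O(\delta_1)$. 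None of this machinery is dispensable under the intended reading of $\hat W^{cs}_F$. As a secondary point, even for the $SM$-level statement you reduce to, the uniform-in-$(v,t_1)$ bound on the transition time is the substantive step; your sketch (a fixed $\epsilon_0$-dense reference segment plus bracketing) is plausible but is exactly the compactness/semicontinuity argument that needs to be written out, and it is carried out in the paper on $FM$ rather than on $SM$.
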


\begin{proof}
By Lemma \ref{LPS}, the geodesic flow has local product structure in a closed $\delta$-neighborhood of every $v\in \text{Reg}(\eta)$.
Recall that the Smale bracket of two points $(v,w)\in  (\text{Reg}(\eta)\times B( \text{Reg}(\eta), \delta))\cup (B( \text{Reg}(\eta), \delta)\times \text{Reg}(\eta))$ is defined as
$$[v,w]_1:= W^{cs}_{g}(v, \kappa\delta)\cap W^u_{g}(w, \kappa\delta), [v,w]_2:= W^{cu}_{g}(v,\kappa\delta)\cap W^s_{g}(w,\kappa\delta)$$
where $\kappa$ is the constant in Lemma \ref{LPS}.
The function $$G(v,w):= \max \{d^{cs}([v,w]_1, v), d^u([v,w]_1, w), d^{cu}([v,w]_2, v), d^s([v,w]_2, w)\}$$
is continuous and vanishes when $v=w$.
Thus there is $\delta_1\ll \delta$ such that if $d(v,w)<2\kappa\delta_1$ then $G(v,w)<\delta/2$.
Moreover, there exists $0<\delta_2\ll \delta_1$ such that if $d(v,w)<\delta_2$ then $G(v,w)<\delta_1/2$.

Fix $x\in FM$ and denote $U=B(x,\delta_2)$. Define for any $t>0$
$$\gamma_t:=\sup\Big\{\gamma>0: \text{\ there exists\ } y\in FM \text{\ such that\ } B(y,\gamma)\cap \bigcup_{0\le s\le t}F^sU =\emptyset\Big\}.$$
We claim that $\gamma_t\to 0$ as $t\to \infty$. Indeed, otherwise there exist $y_t\in FM$ and $\gamma>0$ such that $B(y_t,\gamma)\cap \bigcup_{0\le s\le t}F^sU =\emptyset$ for all $t> 0$. Then any accumulation point $y$ of $y_t$ has $B(y,\gamma/2)\cap \bigcup_{s\ge 0} F^sU =\emptyset$, contradicting to the topological transitivity of $F^t$. This proves the claim, and it follows that there exists $T=T(x,\delta)$ such that for any $y\in FM$, $B(y,\delta_2)\cap \bigcup_{0\le s\le T}F^sU \neq\emptyset$.

We note that in the above argument $T=T(x,\delta)$ is depending on $x\in FM$. Noticing that $\bigcup_{0\le s\le T}F^sU$ and $B(y,\delta_2)$ are open sets, we know $x\mapsto T(x,\delta)$ is upper semicontinuous in $x$ and hence has a maximum value $T=T(\delta)$ over $FM$ which is exactly what we need.
 
Suppose that $(F^{-t_1}x,t_1), (y,t_2)\in \tilde \GGG(\eta)$. By the above argument,  there exist $z_1\in B(x,\delta_2)$ and $0\le T_0\le T=T(\delta)$ such that $F^{T_0}z_1\in B(y,\delta_2)$. Denote $z_2:=F^{T_0}z_1$. 

By the local product structure of the geodesic flow, let $a\in W_g^{s}(\pi(y),\delta)\cap W^{cu}_g(\pi (z_2),\delta)$.
Define $w_2:=p^s(\pi (y), a)(y)$. Then $w_2\in \hat W^{s}_F(y,t_2, \delta)$. Since $z_2\in B(y,\delta_2)$, we have
$$d(w_2,y)\le Cd^{cs}(a, \pi(y))\le C \delta_1/2$$
where $C$ is the constant in Lemma \ref{stablemanifold}. Then
$$d(w_2,z_2)\le d(w_2,y)+d(z_2,y)\le C\delta_1/2+\delta_2.$$
Denote $w_1:=F^{-T_0}w_2$ and $a_1=g^{-T_0}a$. Then $\pi(w_1)=a_1$. 
By Proposition \ref{propnocontraction}, we have that $d(z_1, w_1)\le C^{cu}d(z_2,w_2)$, which means that up to a scalar $C^{cu}$, $F^{-T_0}$ is non-expanding along $\pi^{-1}W_g^{cu}(\pi  z_2)$.

Since $g^{-t}$ is non-expanding along $W_g^{cu}$, we have 
$$d(a_1, \pi(z_1))\le d(a,\pi(z_2))< \delta_1/2.$$
Hence $d(\pi(x),a_1)\le d(\pi(x), \pi(z_1))+d(\pi(z_1), a_1) <\delta_1+\delta_1/2<2\delta_1.$
By the local product structure of the geodesic flow, there exist $b\in W^u_g(\pi (x),\delta)\cap W^{cs}_g(a_1,\delta)$ and $w=p^u(\pi(x), b)(x)$.
Then $\pi w=b$ and $w\in \hat W^u_F(x, t_1, \delta)$.
We have $d^u(b, \pi x)<2\kappa\delta_1$, and hence  $d(w, x)<2C^u\kappa\delta_1$ by Lemma \ref{lemmaunstableexpansion}. Thus 
$d(w,w_1)\le d(x,w)+d(x,z_1)+d(z_1,w_1)\le 2C^u\kappa\delta_1+\delta_2+C^{cu}(C\delta_1/2+\delta_2)$.

Since $w\in \pi^{-1}W_g^{cs}(\pi w_1)$, by a parallel argument as in the proof of Proposition \ref{propnocontraction}, we know that 
$d(F^{T_0}w, w_2)\le C^{cu}d(w, w_1)$. Thus we have
\begin{equation*}
    \begin{aligned}
&d(F^{T_0}w, y)\le d(F^{T_0}w, w_2)+d(w_2, y)\le C^{cu}d(w, w_1)+d(w_2,y)\\
\le &2C^{cu}C^u\kappa\delta_1+C^{cu}\delta_2+(C^{cu})^2(C\delta_1/2+\delta_2)+C\delta_1/2\ll \delta.
\end{aligned}
\end{equation*}
Thus $F^{T_0}w\in F^{T_0}(\hat W_F^u(x,t_1, \delta))\cap \hat W_F^{cs}(y,t_2,\delta)$ and we are done.
\end{proof}

\begin{theorem}\label{spe}
Let $F^t: FM \to FM$ be the frame flow on a closed rank one manifold $M$ with bunched nonpositive curvature which is topologically transitive. Given $\eta>0$, let $\delta=\delta(\eta)$ be as in \eqref{smalldelta} and $T=T((1-e^{-\frac{\eta}{2}})\delta)>0$ from Lemma \ref{uniformtran}. Then $\tilde{\mathcal{G}}(\eta)$ satisfies weak specification property with gap function $T$ at scale $\delta$. More precisely, for every $n\geq 2$ and 
$\{(x_i,t_i)\}_{i=1}^n\subset \tilde{\mathcal{G}}(\eta)$, there exist
\begin{enumerate}
    \item a sequence $\{\tau_i\}_{i=1}^{n-1}$ with $\tau_i\leq T$ for each $i$,
    \item a point $y\in X$, for which we denoted by $\text{Spec}^{n,\delta}_{\{\tau_i\}}(\{(x_i,t_i)\})$
\end{enumerate}
such that
$$
d_{t_i}(F^{\sum_{j=1}^{i-1}(t_j+\tau_j)}(y),x_i)<\delta.
$$
\end{theorem}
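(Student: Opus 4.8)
The plan is to assemble the shadowing orbit from the $n$ segments one gluing at a time: the effective-density Lemma~\ref{uniformtran} produces the transition times, and the local product structure of Lemma~\ref{LPS} is used to splice each new pair into the partial orbit without losing control of the scale. All corrections will be carried out at the reduced scale $\delta_0:=(1-e^{-\eta/2})\delta$ --- this is exactly why the gap bound in the statement is $T=T(\delta_0)$ --- so that the accumulated shadowing error, which at each step contracts by a factor $e^{-\eta/2}$ along the hyperbolic directions, sums back below $\delta$ by a geometric series.

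First I would record the gluing step for a single pair. Given $(x_i,t_i),(x_{i+1},t_{i+1})\in\tilde\GGG(\eta)$, Lemma~\ref{uniformtran} (with its scale taken to be $\delta_0$) produces $\tau_i\le T$ and a point $q_i\in F^{\tau_i}\hat W^u_F(F^{t_i}x_i,t_i,\delta_0)\cap\hat W^{cs}_F(x_{i+1},t_{i+1},\delta_0)$. By the unstable half of Lemma~\ref{stablemanifold}, the point $F^{-\tau_i-t_i}q_i$ is a fake-unstable translate of $x_i$ whose forward orbit $C\delta_0 e^{-\frac{\eta}{2}(t_i-t)}$-shadows $(x_i,t_i)$ on $[0,t_i]$; by the stable half, the forward orbit of $q_i$ itself $C\delta_0 e^{-\frac{\eta}{2}t}$-shadows $(x_{i+1},t_{i+1})$ on $[0,t_{i+1}]$, with $C=C(\eta)$. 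Because $(x_i,t_i)\in\GGG(\eta)$ forces $\lambda(x_i)\ge\eta$, i.e.\ $x_i\in\text{Reg}(\eta)$, Lemma~\ref{LPS} provides local product structure with a uniform constant $\kappa$ in a $\delta$-ball about each such $x_i$; together with Proposition~\ref{propnocontraction} and Lemma~\ref{lemmaunstableexpansion}, this lets us take Smale brackets of frames lying $\delta_0$-close to some $x_i$ while keeping the displacement in the center-unstable direction under control, and since $F^t$ is an isometry on the $SO(n-1)$-fibres the fibre component of such a correction is bounded by a constant multiple of $\delta_0$ and is merely translated, not amplified, under the flow.

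Next I would run the induction from the last segment. The claim is that for $k=n,n-1,\dots,1$ there are gaps $\{\tau_i\}_{i=k}^{n-1}$, all $\le T$, and a point $w_k$ whose orbit $(\delta/2)$-shadows $(x_k,t_k),\dots,(x_n,t_n)$ with those gaps; take $w_n:=x_n$. For the inductive step, apply the single-pair step above to $(x_{k-1},t_{k-1}),(x_k,t_k)$, obtaining $\tau_{k-1}$ and $q_{k-1}$. The partial orbit already passes near $x_k$ through $w_k$, which Lemma~\ref{stablegeo} certifies is a $\GGG(\eta/2)$-good starting vector, whereas $q_{k-1}$ lies on the fake center-stable manifold of $x_k$; using the local product structure above I would take the Smale bracket of $w_k$ with the position of $q_{k-1}$ at the start of its $x_k$-window, getting a point $w_{k-1}'$ lying on the fake strong-stable manifold of $w_k$ and on the fake center-unstable manifold of $q_{k-1}$. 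Its forward orbit converges to that of $w_k$, so it still $(\delta/2)$-shadows $(x_k,\dots,x_n)$; its backward orbit converges to that of $q_{k-1}$ --- this is where backward non-expansion in the center-unstable direction (Proposition~\ref{propnocontraction}), together with Lemma~\ref{stablegeo}, enters --- so after flowing it back through $\tau_{k-1}+t_{k-1}$ it lands $C\delta_0$-close to $x_{k-1}$ and $(\delta/2)$-shadows $(x_{k-1},t_{k-1})$; let $w_{k-1}$ be this flowed-back point. Since every correction was made inside a $\delta_0$-ball and the strong-(un)stable components contract geometrically while the fibre components are only isometrically transported, the total error of $w_1$ is at most $C\delta_0\sum_{j\ge0}e^{-\frac{\eta}{2}j}=C\delta_0/(1-e^{-\eta/2})=C\delta$; shrinking the initial $\delta$ by the fixed factor $C$, which only makes Lemma~\ref{uniformtran} easier to apply, keeps this below $\delta$. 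Then $y:=w_1$ together with the gaps $\tau_1,\dots,\tau_{n-1}$ is the point we want.

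The step I expect to be the main obstacle is the splicing in the inductive step: one must merge the freshly glued pair into the orbit built so far without error growth, and in the frame bundle the relevant center direction contains the $SO(n-1)$-fibre on which $F^t$ is isometric, so fibre errors do not decay and must be kept from ever exceeding the target scale. The way around it is to perform the splice as a Smale bracket inside the fake frame (un)stable manifolds $\hat W^{s/u/cs/cu}_F$, whose good behaviour along $\GGG(\eta)$-segments is precisely the content of Lemma~\ref{stablemanifold} and Proposition~\ref{propnocontraction}, and to confine every correction to a $\delta_0$-ball, with the factor $(1-e^{-\eta/2})$ in the statement supplying exactly the slack the geometric series needs. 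A secondary point needing care is that Lemma~\ref{uniformtran} is always invoked on two genuine $\tilde\GGG(\eta)$-segments --- the original pairs $(x_i,t_i),(x_{i+1},t_{i+1})$ --- and never on the partial orbit, so the gap bound $T$ does not degrade with $n$.
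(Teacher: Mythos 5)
Your strategy differs from the paper's in two structural ways, and the second creates a genuine gap. The paper proves the theorem by a \emph{forward} induction: set $y_1=x_1$, and at each step apply Lemma~\ref{uniformtran} to the \emph{current partial orbit's endpoint} (not the original segment $x_k$), obtaining $\tau_k$ and a new point $y_{k+1}$ simultaneously lying on the fake unstable leaf $\hat W^u_F$ of that endpoint and $FM$-close to $x_{k+1}$. This is essential: Lemma~\ref{uniformtran} is the only tool that forces the $SO(n-1)$-fibre coordinate to line up, and because $y_{k+1}$ lands on a genuine fake \emph{strong} unstable leaf of the partial orbit, the new $\delta_0$-sized error decays geometrically along all earlier segments, giving the bound $(1+\lambda^{-1}+\cdots+\lambda^{-(k-i)})(1-\lambda^{-1})\delta$. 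Your plan instead fixes $\tau_i$ from the original pair $(x_i,t_i),(x_{i+1},t_{i+1})$ and then splices by a Smale bracket, which is where it breaks down.

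The splice step does not work as you describe. You take $w'_{k-1}\in\hat W^s_F(w_k)\cap\hat W^{cu}_F(q_{k-1})$ and claim its backward orbit \emph{converges} to that of $q_{k-1}$; but $\hat W^{cu}_F(q_{k-1})$ is, per the paper's definition, simply $\pi^{-1}(W^{cu}_g(\pi q_{k-1}))$, and Proposition~\ref{propnocontraction} gives only backward \emph{non-expansion} by a factor $C^{cu}>1$, not contraction — the center direction of the geodesic flow in nonpositive curvature and the $SO(n-1)$-fibre are both neutral going backward. Consequently the error at $x_{k-1}$ is bounded by roughly $C^{cu}(C\kappa+1)\bigl(d(w_k,x_k)+d(q_{k-1},x_k)\bigr)+\text{const}$, and since the bracket displacement $d(w'_{k-1},q_{k-1})$ is proportional to the \emph{accumulated} error $E_k:=d(w_k,x_k)$ rather than a fixed multiple of $\delta_0$, you get a recursion of the type $E_{k-1}\gtrsim C^{cu}(C\kappa+1)E_k$. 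As $C^{cu},C,\kappa>1$, the errors grow geometrically in $n$ and are not summable; your claimed bound $C\delta_0\sum_{j\ge 0}e^{-\eta j/2}$ does not follow. The decay factor $e^{-\eta/2}$ is only available along the fake \emph{strong} stable/unstable leaves (Lemma~\ref{stablemanifold}), and your bracket only controls one of those two directions — the other direction unavoidably carries the neutral fibre error. The paper sidesteps this by never bracketing in $FM$ at all: each gluing goes through Lemma~\ref{uniformtran}, whose transition time $\tau_k$ is chosen precisely so that the fibre components of the new point and the partial orbit's endpoint are $FM$-close, and since $T(\delta)$ is uniform over $FM$, applying the lemma to the partial orbit (rather than the original pairs, as you insist on) costs nothing — your concern that $T$ would ``degrade with $n$'' is unfounded.
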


\begin{proof} 
Denote $\lambda=e^{\frac{\eta}{2}}$. We show inductively that for each $1\le k\le n$, there exist $y_k \in FM$ and $\tau_1,\tau_2, \dots, \tau_{k-1}$, such that for any $i=1,2,\cdots, k$ and $0\le t\le t_i$,
\begin{equation}\label{e:series}
d (F^tx_i,F^{t+\sum_{j=1}^{i-1}(t_j+\tau_j)}y_k)<(1+\frac{1}{\lambda}+\dots+\frac{1}{\lambda^{k-i}})(1-\frac{1}{\lambda})\delta.
\end{equation}
Setting $y=y_n$ will finish the proof of the theorem.

For $k=1$, let $y_1=x_1$, and the conclusion is obvious. Assume that the above claim is true for $1\le k\le n$. Then consider the case for $k+1$. We apply Lemma \ref{uniformtran} by replacing $x$ by $F^{\sum_{i=1}^{k-1}(t_i+\tau_i)}y_k$, $y$ by $x_{k+1}$, and $\delta$ by $(1-\frac{1}{\lambda})\delta$. Then we obtain $\tau_k\in [0, T((1-\frac{1}{\lambda})\delta)]$ such that
$$F^{-\sum_{i=1}^{k}(t_i+\tau_i)}\left(F^{\tau_k}(W_F^u(F^{\sum_{i=1}^{k-1}(t_i+\tau_i)+t_k}y_k,(1-\frac{1}{\lambda})\delta)\cap(\hat W_F^{cs}(x_{k+1},(1-\frac{1}{\lambda})\delta)))\right)\neq \emptyset.$$
Let $y_{k+1}$ be any point in the above intersection.

Considering $$F^{\sum_{i=1}^{k}(t_i+\tau_i)}(y_{k+1})\in \hat W_F^{cs}(x_{k+1},(1-\frac{1}{\lambda})\delta),$$ 
by Lemma \ref{stablemanifold} we can get for any $0\le t\le t_{k+1}$,
\begin{equation}
 d(F^tx_{k+1},F^{t+{\sum_{i=1}^{k}(t_i+\tau_i)}}y_{k+1})<(1-\frac{1}{\lambda})\delta.
\end{equation}
Considering $$F^{t_k+{\sum_{i=1}^{k-1}(t_i+\tau_i)}}(y_{k+1})\in \hat W_F^u(F^{t_k+\sum_{i=1}^{k-1}(t_i+\tau_i)}y_k,(1-\frac{1}{\lambda})\delta),$$ 
we have for each $0\le i \le k$ and $0\le t\le t_i$,
\begin{equation}
\begin{split}
&d(F^tx_i,F^{t+\sum_{j=1}^{i-1}(t_j+\tau_j)}y_{k+1})\\
\le&d(F^tx_i,F^{t+\sum_{j=1}^{i-1}(t_j+\tau_j)}y_k)+d(F^{t+{\sum_{j=1}^{i-1}(t_j+\tau_j)}}y_k,F^{t+{\sum_{j=1}^{i-1}(t_j+\tau_j)}}y_{k+1})\\
<&(1+\frac{1}{\lambda}+\dots+\frac{1}{\lambda^{k-i}})(1-\frac{1}{\lambda})\delta+\frac{1}{\lambda^{k+1-i}}(1-\frac{1}{\lambda})\delta\\
=&(1+\frac{1}{\lambda}+\dots+\frac{1}{\lambda^{k-i}}+\frac{1}{\lambda^{k+1-i}})(1-\frac{1}{\lambda})\delta.
\end{split}
\end{equation}
This proves \eqref{e:series} and we are done.
\end{proof}

\subsubsection{Uniqueness of equilibrium states}
Let $M$ be a closed, oriented, nonpositively curved $n$-manifold, with bunched curvature. Suppose that the frame flow $F^t:FM\to FM$ is topologically transitive, and $\tilde\varphi: FM\to \mathbb{R}$ is a H\"{o}lder continuous potential that is constant on the fibers of the bundle $\pi: FM\to SM$. Recall that $\tilde\varphi=\varphi\circ \pi$. The following is a version of Bowen's inequality \cite[Theorem 17]{Bow71} in our setting, for pressure of orbit segments.

\begin{lemma}\label{liftpressure}
For $\tilde{\mathcal{C}}\subset FM\times \RR^+$, denote $\mathcal{C}:=\{(\pi x, t): (x,t)\in \tilde{\mathcal{C}}\}\subset SM\times \RR^+$. Then we have $$P(F^t, \tilde{\mathcal{C}},\tilde\varphi)=P(g^t,\mathcal{C},\varphi).$$
\end{lemma}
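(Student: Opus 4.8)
The plan is to establish both inequalities $P(F^t,\tilde{\mathcal C},\tilde\varphi)\ge P(g^t,\mathcal C,\varphi)$ and $P(F^t,\tilde{\mathcal C},\tilde\varphi)\le P(g^t,\mathcal C,\varphi)$ by comparing separated partition functions at each fixed scale $\delta$ and then letting $\delta\to 0$. The argument rests on three facts already recorded above: the projection $\pi:FM\to SM$ is smooth, hence $L$-Lipschitz for some $L\ge 1$, and satisfies $\pi\circ F^t=g^t\circ\pi$; since $\tilde\varphi=\varphi\circ\pi$, Birkhoff integrals along corresponding orbit segments coincide, so $\Phi^{\tilde\varphi}_0(x,t)=\int_0^t\tilde\varphi(F^sx)\,ds=\int_0^t\varphi(g^s\pi x)\,ds=\Phi^{\varphi}_0(\pi x,t)$ for every $(x,t)$; and the fibers of $\pi$ are compact (identified with $SO(n-1)$), with $F^t$ acting isometrically along them and commuting with the right $SO(n-1)$-action.

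\emph{Lower bound.} Fix $\delta>0$ and an arbitrary $(t,\delta)$-separated set $E\subset\mathcal C_t$. By the definition $\mathcal C=\{(\pi x,t):(x,t)\in\tilde{\mathcal C}\}$, for each $(v,t)\in E$ we may choose a lift $x_v\in FM$ with $(x_v,t)\in\tilde{\mathcal C}$ and $\pi x_v=v$. Since $\pi$ is $L$-Lipschitz and flow-equivariant, $d_t(v,w)\le L\,d_t(x_v,x_w)$, so $\tilde E:=\{x_v:(v,t)\in E\}$ is a $(t,\delta/L)$-separated subset of $\tilde{\mathcal C}_t$, and the weights are preserved, $\sum_{x_v}e^{\Phi^{\tilde\varphi}_0(x_v,t)}=\sum_v e^{\Phi^{\varphi}_0(v,t)}$. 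Taking the supremum over $E$ gives $\Lambda_t(\tilde{\mathcal C},\tilde\varphi,\delta/L)\ge\Lambda_t(\mathcal C,\varphi,\delta)$, and then $\limsup_{t\to\infty}\tfrac1t\log(\cdot)$ followed by $\delta\to0$ yields $P(F^t,\tilde{\mathcal C},\tilde\varphi)\ge P(g^t,\mathcal C,\varphi)$.

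\emph{Upper bound.} Fix $\delta>0$ and a $(t,\delta)$-separated set $\tilde E\subset\tilde{\mathcal C}_t$. Let $E$ be a maximal $(t,\delta)$-separated subset of $\pi(\tilde E)\subset\mathcal C_t$; then $\pi(\tilde E)\subset\bigcup_{x\in E}B_t(x,\delta)$, so assigning to each $y\in\tilde E$ a point $x(y)\in E$ with $d_t(\pi y,x(y))<\delta$ partitions $\tilde E=\bigsqcup_{x\in E}\tilde E_x$ into $(t,\delta)$-separated subsets of the preimages $\pi^{-1}(B_t(x,\delta))$. The crux is a \textbf{uniform multiplicity bound}: there is $N=N(\delta)\in\NN$, independent of $t$ and of $x$, such that any $(t,\delta)$-separated subset of $\pi^{-1}(B_t(x,\delta))$ has cardinality at most $N(\delta)$; in particular $\#\tilde E_x\le N(\delta)$. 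Granting this, for $y\in\tilde E_x$ one has $\Phi^{\tilde\varphi}_0(y,t)=\Phi^{\varphi}_0(\pi y,t)\le\Phi^{\varphi}_\delta(x,t)$, whence $\sum_{y\in\tilde E}e^{\Phi^{\tilde\varphi}_0(y,t)}\le N(\delta)\sum_{x\in E}e^{\Phi^{\varphi}_\delta(x,t)}\le N(\delta)\,\Lambda_t(\mathcal C,\varphi,\delta,\delta)$; taking the supremum over $\tilde E$ gives $\Lambda_t(\tilde{\mathcal C},\tilde\varphi,\delta)\le N(\delta)\,\Lambda_t(\mathcal C,\varphi,\delta,\delta)$. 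Applying $\limsup_{t\to\infty}\tfrac1t\log(\cdot)$ absorbs $\log N(\delta)$, so $P(F^t,\tilde{\mathcal C},\tilde\varphi)\le P(\mathcal C,\varphi,\delta,\delta)$; and since $\varphi$ is continuous, $\mathrm{Var}_t(\varphi,\delta)\le t\,\omega_\varphi(\delta)$ with $\omega_\varphi$ the modulus of uniform continuity of $\varphi$, so $\Lambda_t(\mathcal C,\varphi,\delta,\delta)\le e^{t\omega_\varphi(\delta)}\Lambda_t(\mathcal C,\varphi,\delta)$ and $P(\mathcal C,\varphi,\delta,\delta)\le\omega_\varphi(\delta)+P(g^t,\mathcal C,\varphi,\delta)$. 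Letting $\delta\to0$ finishes the proof. (Alternatively one could run the whole upper bound with spanning sets and invoke \eqref{eqsepspancomparison}.)

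\emph{Main obstacle.} The only non-formal ingredient is the uniform multiplicity bound, which is the orbit-segment, flow analogue of Bowen's theorem that a compact group extension preserves topological entropy. The mechanism is that, $SO(n-1)$ being compact, acting on the fibers of $FM\to SM$ by isometries, and commuting with $F^t$, fiberwise separation cannot be amplified by the dynamics: if $R_g$ denotes a fiber rotation then $d_t(z,R_g z)\le\rho(g):=\sup_{z'}d(z',R_g z')$ for every $t$, and $\rho(g)\to0$ as $g\to e$, so a finite $\rho$-net of $SO(n-1)$ of fixed cardinality bounds the number of $(t,\delta)$-separated points lying in a single fiber. Upgrading this from one fiber to $\pi^{-1}(B_t(x,\delta))$ requires controlling the base motion: every point of $\pi^{-1}(B_t(x,\delta))$ has base orbit staying within $2\delta$ of $\{g^sx:s\in[0,t]\}$, and one combines the fiber-net estimate with a fixed finite trivializing atlas of $\pi$ (whose Lebesgue number exceeds $2\delta$ once $\delta$ is small) and the compactness of $SM$ to produce $N(\delta)$ uniformly in $t$ and $x$. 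This is the step I expect to require the most care; everything else is bookkeeping with separated sets and the elementary comparison \eqref{eqsepspancomparison}.
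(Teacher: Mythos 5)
Your lower bound is correct and is essentially what the paper does (they simply remark it is ``straightforward''). The upper bound, however, contains a genuine gap exactly where you flag it. The claimed uniform multiplicity bound --- that there is $N(\delta)$, \emph{independent of} $t$ \emph{and} $x$, bounding the cardinality of any $(t,\delta)$-separated subset of $\pi^{-1}(B_t(x,\delta))$ --- is not established by the mechanism you sketch, and in fact is not true for a general compact group extension. Your fiber argument is sound for points lying in a \emph{single} fiber $\pi^{-1}(v)$: since the right $SO(n-1)$-action commutes with $F^t$ and acts isometrically, $d_t(z,R_gz)\le\rho(g)$ for all $t$, so a fixed net of the group bounds the count. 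But once you allow $y_1\in\pi^{-1}(v_1)$ and $y_2\in\pi^{-1}(v_2)$ with $v_1\ne v_2$ in $B_t(x,\delta)$, comparing their ``fiber coordinates'' requires a choice of transport from one fiber to the other, and that choice does \emph{not} commute with $F^t$. Writing the flow in a local trivialization as $(v,g)\mapsto(g^s v, A_s(v)g)$, the relevant quantity is $d_G(g_1, A_s(v_1)^{-1}A_s(v_2)g_2)$, and nothing in ``finite trivializing atlas $+$ compactness of $SM$'' keeps the holonomy comparison $A_s(v_1)^{-1}A_s(v_2)$ from wandering over $SO(n-1)$ as $s$ runs over $[0,t]$. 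In general this makes the number of $(t,\delta)$-separated points over a single base Bowen ball grow with $t$ (subexponentially, which would still suffice after $\tfrac1t\log$, but that is a different and weaker claim than the one you assert and use). Controlling this holonomy drift for the frame flow is precisely what the paper's Gauss--Bonnet Lemma~\ref{lemmaholonomy} and the bunched-curvature hypothesis are for, and you never invoke them.

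The paper sidesteps all of this by running Bowen's original argument from \cite{Bow71}: it uses $(T,\epsilon)$-\emph{spanning} sets $E_v$ of each \emph{individual fiber} $\pi^{-1}(v)$. Because $F^t$ is isometric along fibers, $E_v$ is just an $\epsilon$-net of $\pi^{-1}(v)$ with cardinality genuinely independent of $T$ (and, by compactness of $SM$, independent of $v$ as well). One then covers $SM$ by sets $W_v$ on which $\pi^{-1}W_v$ sits inside $\bigcup_{z\in E_v}B_T(z,2\epsilon)$, takes a Lebesgue number, and feeds a base spanning set through this cover to majorize the separated partition function on $FM$ by the base spanning partition function times $\sup_v\#E_v$. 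The key structural advantage is that at no point does the paper compare frames over \emph{different} base points across a whole orbit segment; the only cross-fiber comparison is at a single time, absorbed into the $2\epsilon$. Your separated-set route can likely be salvaged, but only by either (i) weakening the multiplicity bound to subexponential growth in $t$ and showing this suffices, or (ii) proving a genuine Bowen/Walters regularity for the parallel-transport cocycle over Bowen balls, which needs the curvature hypotheses. As written, the proposal is incomplete at its central step.
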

\begin{proof}
Let $\epsilon>0$ and $T>0$. For any $v\in SM$, let $E_v$ be a $(T,\epsilon)$-spanning set of $\pi^{-1}v$ such that
$$
\Lambda_{sp,T}^{F,d}(\pi^{-1}v\times \RR^+,\tilde\varphi,\epsilon)=\sum_{x\in E_v}e^{\tilde{\Phi}_{0}(x,T)}.
$$
Recall that the frame flow $F^t$ acts isometrically and $\varphi$ is constant on the fiber $\pi^{-1}v$. So the choice of $E_v$ is independent on $T$. Then we have
$$
\Lambda_{sp,T}^{F,d}(\pi^{-1}v\times \RR^+,\tilde\varphi,\epsilon)=e^{\tilde\Phi_{0}(v,T)}\# E_v.
$$

Set $U_v:=\cup_{x\in E_v}B_T(x,2\epsilon)$, which is an open neighborhood of the fiber $\pi^{-1}v$. Then there is a $\gamma>0$ such that $\pi^{-1}B(v,\gamma)\subset U_v$. Set $W_v:=B(v,\gamma)$. Let $\{W_{v_i}\}_{i=1}^r$ be an open cover of $SM$ and $\delta>0$ the Lebesgue number of this cover. Let $E_T$ be a $(T,\delta)$-spanning set of $\mathcal{C}_T$ such that 
$$
\Lambda_{sp,T}^{g,d}(\mathcal{C},\varphi,\delta)=\sum_{w\in E_T}e^{\Phi_{0}(w,T)}.
$$
For $w\in E_T$, pick $c(w)$ among $v_1, v_2, \cdots, v_r$ such that $B(w, \delta)\subset W_{c(w)}$. Then for $w\in E_T$ and $z\in E_{c(w)}$, set 
\[
V(w;z):=\{x\in \tilde{\mathcal{C}}_T: d_T(x,z)<2\epsilon\}.
\]
It follows that $\tilde{\mathcal{C}}_T=\cup_{w,z}V(w;z)$.

Let $S$ be a $(T,4\epsilon)$-separated set of $\tilde{\mathcal{C}}_T$ such that 
$$
\Lambda_{T}^{F,d}(\tilde{\mathcal{C}},\tilde\varphi,4\epsilon)=\sum_{u\in S}e^{\tilde\Phi_{0}(u,T)}.
$$
Then each set $V(w;z)$ can contain at most one element of $S$. Therefore,
\begin{equation}
\begin{aligned}
&\Lambda_{T}^{F,d}(\tilde{\mathcal{C}},\tilde\varphi,4\epsilon)=\sum_{u\in S}e^{\tilde\Phi_{0}(u,T)}\\
\le&\sum_{w\in E_T}\sum_{u\in E_{c(w)}}e^{\tilde{\Phi}_{2\epsilon}(u,T)}\\
\le&\sum_{w\in E_T}e^{\Phi_{2\epsilon}(w,T)}\# E_{c(w)}.
\end{aligned}
\end{equation}
Noticing that $\# E_v$ has a uniform upper bound, we see from above that
$$P(F^t, \tilde{\mathcal{C}},\tilde\varphi, 4\epsilon,0)\le P(g^t,\mathcal{C},\varphi, \delta, 2\epsilon).$$
Letting $\epsilon\to 0$ and hence $\delta\to 0$, we get
$$P(F^t, \tilde{\mathcal{C}},\tilde\varphi)\le P(g^t,\mathcal{C},\varphi).$$
The other direction is straightforward, so we are done.
\end{proof}

\begin{proof}[Proof of Theorem \ref{frameflow}]
Recall that $(SM,g^t)$ is assumed to satisfy the pressure gap condition $P(g^t,\text{Sing},\varphi)<P(g^t,\varphi)$. By taking $\text{inj}(M)$ to be the injectivity radius of $M$, for any $\epsilon\in (0,\text{inj}(M)/3)$, it follows from \cite[Lemma 5.3]{BCFT} that $P_{\exp}^{\perp}(g^t,\ph,\epsilon)<P(g^t,\ph)$, which by \cite[Proposition 3.7]{CT16} implies that
\begin{equation} \label{eqpressuregtscale}
    P(g^t,\varphi,\epsilon')=P(g^t,\varphi) \quad \text{ for all }\epsilon'\in (0,\epsilon/2].
\end{equation}
Since $d(x,y)\ge d(\pi x, \pi y)$ for any $x,y\in FM$, we see $P(F^t,\tilde\varphi,\eta)\geq P(g^t,\varphi,\eta)$ for all $\eta>0$, and it follows immediately from Lemma \ref{liftpressure} and \eqref{eqpressuregtscale} that
\begin{equation} \label{eqpressureFscale}
    P(F^t,\tilde\varphi,\epsilon')=P(F^t,\tilde\varphi) \quad \text{ for all }\epsilon'\in (0,\text{inj}(M)/6)
\end{equation}

Meanwhile, for all such $\epsilon'$, it is well known that the geodesic flow in nonpositive curvature is entropy expansive at scale $\epsilon'$, i.e., $h(g_t,\Gamma_{\epsilon'}(v))=0$ for any $v\in SM$ where 
$$\Gamma_{\epsilon'}(v):=\{w\in SM: d(g^tv, g^tw)<\epsilon', \forall t\in \RR\}.$$ 
By Lemma \ref{liftpressure}, $h(F^t,\Gamma_{\epsilon'}(x))=0$ for any $x\in FM$ where 
$$\Gamma_{\epsilon'}(x):=\{y\in FM: d(F^ty, F^tx)<\epsilon', \forall t\in \RR\}.$$ 
This implies that the frame flow is also entropy expansive at scale $\epsilon'$. In particular, it follows from Proposition \ref{propexpansivegood} that 
\begin{equation}
    h_{\mu}(F^s,\mathcal{A}_s)=h_{\mu}(F^s)
\end{equation}
for every $s>0$, any finite partition $\mathcal{A}_s$ for $FM$ with $\text{Diam}_s(\mathcal{A}_s)<\epsilon'$, and any $\mu\in \mathcal{M}_{F}^e(FM)$ being almost entropy expansive at scale $\epsilon'$.

Given $(x,t)\in FM\times \RR^+$, $(\pi(x), t)\in  SM\times \RR^+$ has a $(\PPP, \GGG, \SSS)$-decomposition according to Definition \ref{decom}. We define $(\tilde \PPP, \tilde\GGG, \tilde\SSS)$-decomposition of $(x,t)$ as the unique lift of $(\PPP, \GGG, \SSS)$-decomposition of $(\pi(x), t)$, see Definition \ref{decom1}. We have the following properties:
 
\begin{enumerate}
    \item $\tilde \GGG$ satisfies weak specification, by Theorem \ref{spe}.
    \item Notice that $\varphi$ has the Bowen property on $\GGG$ at some scale $\delta_0$, see \cite[Corollaries 7.5 and 7.8]{BCFT}. As $\tilde \varphi$ is constant on each fiber, $\tilde\varphi$ has the Bowen property on $\tilde \GGG$ at the same scale.
    \item By \cite[Proposition 5.2]{BCFT}, $P(g^t, [\PPP]\cup [\SSS])<P(g^t, \varphi)$. It follows from Lemma \ref{liftpressure} that $P(F^t, [\tilde \PPP]\cup [\tilde \SSS])<P(F^t, \tilde\varphi)$.
\end{enumerate}   
Therefore, by taking $\eps:=\min\{\text{inj}(M)/6,\delta_0\}$, all the conditions of Theorem \ref{thm:weprove} are satisfied. It follows that the frame flow $(FM, F^t, \tilde\varphi)$ has a unique equilibrium state.
\end{proof}

\ \
\\[-2mm]
\textbf{Acknowledgement.}
We would like to thank Daniel Thompson for reading a first version of the paper and several useful remarks. TW would also like to thank Yuhao Hu for helpful discussions. TW and WW are supported by National Key R\&D Program of China No. 2022YFA1007800, NSFC No. 12071474; NSFC No. 12401239, and STCSM No. 24ZR1437200. 

%\section{Conflict of interest statement}
%On behalf of all authors, the corresponding author states that there is no conflict of interest.

%\section{Data Availability Statements}
%All data generated or analysed during this study are included in this published article (and its supplementary information files).

\end{document}